\theoremstyle{plain}
\newtheorem{thm}{Theorem}%[section]
\newtheorem{lem}[thm]{Lemma}
\theoremstyle{definition}
\theoremstyle{remark}
\newtheorem{nremark}{Remark}
\DeclareMathOperator{\Prtxt}{Pr}
\newcommand{\cO}[1]{\mathcal{O}\left(#1\right)}
\newcommand{\cF}{\mathcal{F}}
\newcommand{\RR}{\mathbb{R}}      % Real numbers
\newcommand{\NN}{\mathbb{N}}      % natural numbers
\newcommand{\evp}[2]{\mathbb{E}_{#2} \left[#1\right]} % expected value operator
\newcommand{\abs}[1]{\left| #1 \right|}
\newcommand{\prp}[2]{\Prtxt_{#2} \left(#1\right)}
\newcommand{\lrp}[1]{\left(#1\right)}
\newcommand{\expp}[1]{\exp \left(#1\right)}
\newcommand*{\qedinpw}{\hfill\ensuremath{\square}} % white box
\begin{document}

\begin{frontmatter}

% "Title of the paper"
\title{Sharp Finite-Time Iterated-Logarithm Martingale Concentration}
%\author{Akshay Balsubramani \\
%UC San Diego 
%\footnote{Email: \texttt{abalsubr@cs.ucsd.edu}}
\runtitle{Nonasymptotic Iterated-Log Concentration}

% indicate corresponding author with \corref{}
 \author{\fnms{Akshay} \snm{Balsubramani}\corref{}\ead[label=e1]{abalsubr@ucsd.edu}\thanksref{t1}}
 \thankstext{t1}{Supported by NSF grant IIS-1162581} 
 \address{Department of Computer Science and Engineering \\
University of California, San Diego \\
La Jolla, California 92093, USA\\ \printead{e1}}
 \affiliation{University of California, San Diego}

\runauthor{Balsubramani}

\begin{abstract}
We give concentration bounds for martingales that are uniform over finite times and extend classical Hoeffding and Bernstein inequalities. 
We also demonstrate our concentration bounds to be optimal with a matching anti-concentration inequality, proved using the same method. 
Together these constitute a finite-time version of the law of the iterated logarithm, and shed light on the relationship between it and the central limit theorem. 
\end{abstract}

\begin{keyword}[class=MSC]
\kwd[Primary ]{60E15}
\kwd{60G17}
\kwd[; secondary ]{60G40}
\kwd{60G42}
\kwd{60G44}
%\akshay{what?}
\end{keyword}

\begin{keyword}
\kwd{Martingale, law of the iterated logarithm, stopping time}
%\kwd{}
\end{keyword}

\end{frontmatter}

\section{Introduction}
\label{sec:rwbound}

Martingales are indispensable in studying the temporal dynamics of stochastic processes 
arising in a multitude of fields \cite{H72, P09}.
Particularly when such processes have complex long-range dependences, 
it is often of interest to concentrate martingales uniformly over time.

On the theoretical side, a fundamental limit to such concentration is expressed by the law of the iterated logarithm (LIL). 
However, this only concerns asymptotic behavior. 
In many applications, it is more natural to instead consider concentration that holds uniformly over all finite times.

This manuscript presents such bounds for the large classes of martingales 
which are addressed by Hoeffding \cite{H63} and Bernstein \cite{F75} inequalities. 
These new results are optimal within small constants, and can be viewed as finite-time generalizations of the upper half of the LIL.
 
To be concrete, 
the simplest nontrivial martingale for such purposes is the discrete-time random walk $\{M_t\}_{t=0,1,2,\dots}$ 
induced by flipping a fair coin repeatedly. 
It can be written as $M_t = \sum_{i=1}^t \sigma_i$, where $\sigma_i$ are i.i.d. Rademacher-distributed random variables  
($\prp{ \sigma_i = -1}{} = \prp{ \sigma_i = +1}{} = 1/2$), 
so we refer to it as the ``Rademacher random walk"; take $M_0 = 0$ w.l.o.g.

The LIL was first proved for the Rademacher random walk, by Khinchin:
\begin{thm}[Law of the iterated logarithm \cite{K24}]
\label{thm:lilorig}
Suppose $M_t$ is a Rademacher random walk.
Then with probability $1$,
$$ \limsup_{t \to \infty} \frac{\abs{M_t}}{\sqrt{t \log \log t}} = \sqrt{2} $$
\end{thm}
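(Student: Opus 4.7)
The plan is to prove the two directions of the $\limsup$ equality separately, using the classical Borel--Cantelli strategy along a geometrically spaced subsequence $t_n = \lfloor c^n \rfloor$ for some $c > 1$. Throughout, write $\phi(t) = \sqrt{2 t \log \log t}$ for the envelope.

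\textbf{Upper bound ($\limsup \le \sqrt{2}$).} Fix $\epsilon > 0$. A Chernoff--Hoeffding computation, tracking the exponent carefully since Hoeffding's bound is tight (Gaussian) for Rademacher sums, yields
\[
\pr{\abs{M_{t_n}} \ge (1+\epsilon)\phi(t_n)} \le 2 \exp\!\lrp{-\tfrac{(1+\epsilon)^2 \phi(t_n)^2}{2 t_n}} = 2 (n \log c)^{-(1+\epsilon)^2},
\]
which is summable in $n$. Doob's (or Lévy's) maximal inequality extends this to $\max_{t_{n-1} \le t \le t_n} \abs{M_t}$, while inflating $\phi(t_n)$ only by a factor depending on $c$. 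The first Borel--Cantelli lemma then gives $\limsup_n \max_{t \le t_n} \abs{M_t}/\phi(t_n) \le (1+\epsilon)$ almost surely; sending $c \downarrow 1$ and $\epsilon \downarrow 0$ closes the gap.

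\textbf{Lower bound ($\limsup \ge \sqrt{2}$).} Consider the independent block increments $D_n = M_{t_n} - M_{t_{n-1}}$ and apply a matching anti-concentration estimate of the form
\[
\pr{D_n \ge (1-\epsilon)\phi(t_n - t_{n-1})} \ge (\log n)^{-(1-\epsilon)^2 + o(1)},
\]
obtained by a direct binomial / Stirling calculation (or a quantitative CLT via Berry--Esseen). Since this is not summable and the $D_n$ are independent, the second Borel--Cantelli lemma forces the event to occur infinitely often. Combined with the already-proved upper bound applied to $M_{t_{n-1}}$ (which is of smaller order when $c$ is large), this yields $\abs{M_{t_n}}/\phi(t_n) \ge (1-\epsilon)\sqrt{1 - 1/c} - o(1)$ infinitely often; letting $c \to \infty$ and $\epsilon \to 0$ completes the argument.

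\textbf{Main obstacle.} The sharp constant $\sqrt{2}$ is what makes the result delicate. On the upper half, the exponent $(1+\epsilon)^2$ in the Hoeffding/Gaussian tail must be tracked exactly, and the interpolation over $[t_{n-1}, t_n]$ via a maximal inequality must not inflate the constant beyond what the choice $c \downarrow 1$ can undo. On the lower half, the anti-concentration estimate must be sharp in the exponent (not merely polynomial); obtaining a bound of the form $(\log n)^{-(1-\epsilon)^2}$ rather than a worse polynomial rate is the most technical step, and is precisely the place where generic CLT-style tools are insufficient without quantitative rate control.
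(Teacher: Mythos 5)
Your proposal is the classical epoch-plus-Borel--Cantelli proof of Khinchin's theorem, and it is correct in outline; note that the paper does not prove Theorem \ref{thm:lilorig} at all --- it is cited from Khinchin --- so the relevant comparison is with the paper's own machinery for the finite-time analogues. That machinery is genuinely different: instead of a union bound over geometric epochs followed by Borel--Cantelli, the paper averages the exponential supermartingales $\expp{\lambda M_t - \tfrac{\lambda^2}{2}t}$ over a heavy-tailed mixing density $P_\lambda(d\lambda) \propto d\lambda / (\abs{\lambda}\log^2(1/\abs{\lambda}))$ and applies optional stopping to the mixture at the first crossing time; the iterated logarithm emerges from the antiderivative of the mixing density rather than from the $\sum_n (n\log c)^{-(1+\epsilon)^2}$ summability calculation. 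The two approaches buy different things: yours is elementary and gives the sharp constant $\sqrt{2}$ directly in the asymptotic limit via $c\downarrow 1$ and $c\to\infty$, while the paper's gives a single bound valid simultaneously at all finite times with an explicit $\delta$-dependence (at the cost of a suboptimal leading constant, recovered only via the refinements of Section \ref{sec:constschainatoms}); the paper itself remarks that its stopping-time technique can be seen as a generalization of the epoch idea. One bookkeeping slip in your lower bound: the anti-concentration estimate for a block of length $m = t_n - t_{n-1}$ at level $(1-\epsilon)\phi(m)$ is of order $(\log m)^{-(1-\epsilon)^2 + o(1)} \approx (n\log c)^{-(1-\epsilon)^2+o(1)}$, not $(\log n)^{-(1-\epsilon)^2+o(1)}$; the bound you wrote is larger than what Stirling or Berry--Esseen actually delivers and is not provable as stated. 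Since $(1-\epsilon)^2 < 1$ the correct estimate is still non-summable, so the second Borel--Cantelli step and the rest of your argument go through unchanged.
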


Our main concentration result for the Rademacher random walk generalizes this to hold over finite times.
\begin{thm}
\label{thm:newunifmart}
Suppose $M_t$ is a Rademacher random walk.
Then there is an absolute constant $C$ such that for any $ \delta < 1$, with probability $\geq 1 - \delta$, 
for \emph{all} $t \geq C \log \lrp{\frac{4}{\delta}} $ \emph{simultaneously}, 
the following are true: $\abs{M_t} \leq \frac{t}{e^2 \lrp{1 + \sqrt{1/3}}}$ and 
%\begin{align*}
$\abs{M_t} \leq \sqrt{ 3 t \lrp{ 2 \log \log \lrp{\frac{5 t}{ 2 \abs{M_t} }} + \log \left( \frac{2}{\delta} \right) }} $.
%\end{align*} 

(The latter implies $\abs{M_t} \leq \max \lrp{ \sqrt{ 3 t \lrp{ 2 \log \log \lrp{ \frac{5}{ 2} t} + \log \left( \frac{2}{\delta} \right) }} , 1 }$. 
It suffices if $C=173$.)
\end{thm}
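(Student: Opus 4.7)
The plan is to start from the Hoeffding-type exponential supermartingale $Z_t(\lambda) := \exp(\lambda M_t - \lambda^2 t/2)$, which (since $\mathbb{E}[e^{\lambda \sigma_i}] = \cosh\lambda \leq e^{\lambda^2/2}$) is nonnegative with $Z_0(\lambda) = 1$ for every $\lambda \in \mathbb{R}$. Ville's maximal inequality applied to $Z_t(\lambda)$ and $Z_t(-\lambda)$, each with probability budget $\delta/2$, gives the uniform-in-$t$ linear envelope
\[
  \pr{\exists\, t\ :\ \abs{M_t} > \tfrac{\lambda t}{2} + \tfrac{1}{\lambda}\log(2/\delta)} \leq \delta
\]
for every fixed $\lambda > 0$. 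Minimizing the envelope in $\lambda$ would recover a plain Hoeffding bound $\abs{M_t} \leq \sqrt{2t\log(2/\delta)}$, but that is illegitimate since the optimal $\lambda$ depends on $t$; this is exactly the gap that peeling must close.

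To upgrade to a bound simultaneous in $t$, I would discretize $\lambda$ along a geometric grid $\lambda_k := \lambda_0\,\eta^{-k}$ for $k = 0,1,2,\dots$ and some ratio $\eta > 1$, applying the envelope above with probability $\delta_k \propto k^{-2}\delta$ at grid point $k$ (so $\sum_k \delta_k \leq \delta$). A union bound then produces the simultaneous family
\[
  \abs{M_t} \leq \tfrac{\lambda_k t}{2} + \tfrac{1}{\lambda_k}\log(2/\delta_k) \qquad \text{for all } k \text{ and } t,
\]
on an event of probability $\geq 1-\delta$. For each realization of $(t, M_t)$ I would pick the grid index whose $\lambda_k$ best matches the implicit optimum $\lambda^\star \approx 2\abs{M_t}/t$, namely $k \approx \log_\eta\!\bigl(\lambda_0 t/(2\abs{M_t})\bigr)$. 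Feeding this back, $\log(2/\delta_k) = 2\log k + \log(2/\delta) + O(1) \approx 2\log\log(t/\abs{M_t}) + \log(2/\delta)$, and the envelope collapses to the claimed shape $\abs{M_t} \lesssim \sqrt{t\bigl(\log\log(t/\abs{M_t}) + \log(1/\delta)\bigr)}$.

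The hardest part is chasing the exact constants. Tuning $\eta$ determines the discretization loss $(\sqrt{\eta} + 1/\sqrt{\eta})/2$ by which the grid minimum overshoots the continuous one, and is what turns the Hoeffding factor $2$ into the stated $3$; tuning $\lambda_0$ fixes the topmost grid point, which in turn produces the linear clause $\abs{M_t} \leq t/(e^2(1 + \sqrt{1/3}))$ (this is the envelope at $k=0$ in the regime where the linear term $\lambda_0 t/2$ dominates); and tuning the proportionality constant inside $\delta_k$ pushes the argument of the double logarithm to exactly $5t/(2\abs{M_t})$. The auxiliary $\max(\cdot,1)$ clause covers $\abs{M_t}$ so small that $5t/(2\abs{M_t})$ would exit the natural domain of $\log\log$. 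Finally, the threshold $t \geq C\log(4/\delta)$ (with $C = 173$) is what guarantees that the discretization and allocation losses are absorbed by the second-order $\log\log$ term; verifying this explicit constant is then just bookkeeping once $\eta$, $\lambda_0$, and the weights $\delta_k$ have been fixed.
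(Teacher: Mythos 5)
Your proposal is correct in outline, but it takes a genuinely different route from the paper. You discretize the exponential-supermartingale parameter on a geometric grid $\lambda_k=\lambda_0\eta^{-k}$, apply Ville's inequality at each grid point with budget $\delta_k\propto k^{-2}\delta$, union bound, and then select the grid index adapted to the realized $(t,M_t)$; this is the ``stitching''/peeling method, essentially the epoch-based argument the paper itself mentions as an alternative (transposed from time-epochs to a $\lambda$-grid), and all the pieces you describe do fit together --- the $k=0$ envelope yields the linear clause, the $2\log k$ from the weights yields the $2\log\log$ term, and the discretization loss $(\sqrt{\eta}+1/\sqrt{\eta})/2$ accounts for the inflation of the Hoeffding constant (your $\lambda^\star\approx 2|M_t|/t$ should be $|M_t|/t$, but that only shifts $k$ by $O(1)$). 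The paper instead avoids the union bound entirely: it mixes the supermartingales $X_t^\lambda$ against the continuous density $d\lambda/\lrp{|\lambda|\log^2(1/|\lambda|)}$ to form a single nonnegative supermartingale, applies optional stopping at the first violation time, and lower-bounds the resulting Laplace-type integral around its peak (Lemma \ref{lem:intgsupermartlem1}); a separate bootstrap LLN (Theorem \ref{thm:varunifub}) restricts to the event $|M_t|/t\le\lambda_0$ so that the peak lies inside the support of the mixing density --- this plays the role of your $k\ge 0$ constraint. The trade-off: your argument is more elementary and makes the constant-chasing mechanical, whereas the mixture-plus-stopping-time argument transfers verbatim to continuous time and to other exponential constructions (Lemmas \ref{lem:bmartconstr}--\ref{lem:bernubrelaxconstr}), is reused in reverse for the anti-concentration bound of Theorem \ref{thm:lbinterestingregime}, and admits the refined mixing densities of Section \ref{sec:constschainatoms} that push the leading constant to the optimal $\sqrt{2}$.
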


Theorem \ref{thm:newunifmart} takes the form of the LIL upper bound as $t \to \infty$ for any fixed $\delta > 0$.
Interestingly, it also captures a finite-time tradeoff between $t$ and $\delta$.  
The $\log \log t$ term is dominated by the $\log \left( \frac{1}{\delta} \right)$ term for $\displaystyle t \lesssim e^{1 / \delta}$. 
In this regime, the bound is $\cO{\sqrt{ t \log \left( \frac{1}{\delta} \right) } }$, 
a time-uniform central limit theorem (CLT)-type bound below the LIL rate for small enough $t$ and $\delta$.
%This is of applied interest because $e^{1 / \delta}$ can often be extremely large, in which case the CLT regime can encompass all times realistically encountered in practice.

\subsection{Optimality of Theorem \ref{thm:newunifmart}}
We now show that Theorem \ref{thm:newunifmart} is optimal in a very strong sense. 

Suppose we are concerned with concentration of the random walk uniformly over time up to some fixed finite time $T$. 
If the failure probability $\delta \lesssim \frac{1}{\log T}$, 
then the $\log \frac{1}{\delta}$ term dominates the $\log \log t$ term for all $t < T$. 
In this case, the bound of Theorem \ref{thm:newunifmart} is $\cO{\sqrt{t \log \frac{1}{\delta}}}$ uniformly over $t < T$. 
This is optimal even for a fixed $t$ by binomial tail lower bounds.%, e.g. Slud's inequality.

The more interesting case for our purposes is when $\delta \gtrsim \frac{1}{\log T}$, 
in which case Theorem \ref{thm:newunifmart} gives a concentration rate of 
$\cO{ \sqrt{ t \log \log t + t \log \frac{1}{\delta} }}$, uniformly over $t < T$. 
As $T$ and $t$ increase without bound for any fixed $\delta > 0$, 
this rate becomes $\cO{\sqrt{t \log \log t}}$, 
which is unimprovable by the LIL. 

But the \emph{tradeoff} between $t$ and $\delta$ given in Theorem \ref{thm:newunifmart} 
is also essentially optimal, 
as shown by the following result.
\begin{thm}
\label{thm:lbinterestingregime}
There are absolute constants $C_1, C_2$ such that the following is true. 
Fix a finite time $T > C_2 \log \lrp{\frac{2}{\delta}}$, 
and fix any $\delta \in \left[ \frac{4}{ \log \lrp{ (T-1)/3 }} , \frac{1}{C_1} \right]$.
Then with probability \textbf{at most} $ 1 - \delta$, 
for all $t \in \left[C_2 \log \lrp{\frac{2}{\delta}}, T \right)$ simultaneously, $\abs{M_t} \leq \frac{2 }{ 3 e^2 } t$ and
$$ \abs{M_t} \leq \sqrt{ \frac{2}{3} t \left( \log \log \lrp{ \frac{ \frac{2}{3} t }{\abs{M_{t}} + 2 \sqrt{t / 3 } } } 
+ \log \lrp{\frac{1}{C_1 \delta}} \right)} $$
(It suffices if $C_1 = \lrp{\frac{420}{11}}^2$ and $C_2 = 164$.)
\end{thm}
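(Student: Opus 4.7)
The plan is to mirror the strategy used for the matching upper bound in Theorem \ref{thm:newunifmart}, replacing the exponential-martingale / Doob's-inequality step with a sharp \emph{anti}-concentration estimate and exploiting the independent-increment structure of the Rademacher walk rather than its martingale structure. Concretely, I would fix a geometric grid of times $T_k = c^k$ (the constants $420/11$ and $164$ in the statement strongly suggest $c$ close to $3$), partitioning the interval $[C_2 \log(2/\delta), T)$ into blocks $[T_k, T_{k+1})$, and in the $k$-th block define a target threshold $b_k$ of order $\sqrt{\tfrac{2}{3} T_k (\log\log T_k + \log(1/(C_1\delta)))}$ so that the event $\{M_{T_{k+1}} - M_{T_k} \geq b_k\}$ exactly forces a violation of the stated bound at $t = T_{k+1}$.

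The core analytic step is a one-sided anti-concentration estimate for the increment $M_{T_{k+1}} - M_{T_k}$: I would show via a sharp tilted-measure / Cramér calculation (the reverse of the exponential-moment method driving Theorem \ref{thm:newunifmart}, which is what is meant by ``the same method'') that this increment exceeds $b_k$ with probability at least some $p_k \gtrsim 1/\log T_k$, with constants tracked precisely enough to deliver the $\tfrac{2}{3}$ coefficient. Because the increments across blocks are independent, the probability that \emph{no} block produces such a large positive jump is exactly $\prod_k (1 - p_k)$, and choosing $c$ and the $b_k$ so that $\sum_k p_k$ exceeds the appropriate multiple of $\log(1/\delta) + \log\log T$ reduces this product below $1-\delta$ whenever $\delta$ and $T$ lie in the allowed ranges. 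The final bookkeeping converts the increment-based violation into an endpoint-based one: on the complement of the event of interest, $|M_{T_k}|$ is itself controlled by $O(\sqrt{T_k \log\log T_k})$ via Theorem \ref{thm:newunifmart}, so if $M_{T_{k+1}} - M_{T_k} \geq b_k$ then $|M_{T_{k+1}}|$ exceeds the stated envelope up to an additive slack, which is precisely the role of the $+ 2\sqrt{t/3}$ term and the $|M_t|$ inside the $\log\log$.

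The main obstacle I expect is in the anti-concentration step: standard reverse tail bounds (Paley--Zygmund, Berry--Esseen against a Gaussian, or crude binomial lower bounds) are off by multiplicative constants that ruin the matching $\tfrac{2}{3}$ factor, so one must do a quantitative tilted-measure argument with an explicit Gaussian remainder, uniformly valid across all blocks $k$ including the very first one where $T_k$ is only $\Theta(\log(1/\delta))$. A secondary but nontrivial issue is choosing the geometric base $c$ to simultaneously (i) make the Gaussian approximation kick in from the first block, (ii) keep the successive thresholds $b_k$ close enough together that exceeding $b_k$ at time $T_{k+1}$ implies the claimed continuous-in-$t$ iterated-log violation, and (iii) make $\sum_k p_k$ large enough in the allowed range of $\delta$; the constraint $\delta \geq 4/\log((T-1)/3)$ in the hypothesis is precisely what guarantees there are enough blocks for this sum to reach the required size.
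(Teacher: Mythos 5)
Your route is not the one the paper takes, and as written it has real gaps. The paper does not use block independence at all: it constructs the exponential \emph{sub}martingale $Z_t^\lambda = \expp{\lambda M_t - k\lambda^2 t}$ (valid for $\abs{\lambda}\le e^{-2}$ via $\cosh\lambda \ge e^{k\lambda^2}$), mixes over $\lambda$ with the same density $\frac{d\lambda}{\abs{\lambda}(\log\frac{1}{\abs{\lambda}})^2}$ used in the upper bound, and applies optional stopping for submartingales at the bounded stopping time $\tau(T)$ that triggers on a violation; the inequality $1\le \mathbb{E}^\lambda[\evp{Z_\tau^\lambda}{}]$ is then played off against an \emph{upper} estimate of the mixture integral (Lemma \ref{lem:lbintestimate}) and a one-step Lipschitz bound $G_{\tau}\le \frac{14}{11}G_{\tau-1}$ to force $P(\tau<T)\ge\delta$. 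That is what ``inverting the argument'' means — same stopping-time/mixture machinery with all inequalities reversed — not a reverse Chernoff bound on independent increments.

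The concrete gaps in your proposal: (i) the entire quantitative content of the theorem sits in the one-sided lower tail bound $p_k$ with the matching $\tfrac23$ constant, which you identify as the obstacle but do not supply; standard reverse bounds lose constants, and in the first block, where $T_k=\Theta(\log\frac{1}{\delta})$, the threshold $b_k=\Theta(\sqrt{T_k\log\frac{1}{\delta}})=\Theta(T_k)$ is a \emph{linear} deviation, so the Gaussian/tilted-measure rate $\expp{-b_k^2/2\Delta_k}$ is simply wrong there (the true rate involves the binomial rate function, which is why the theorem carries the separate clause $\abs{M_t}\le\frac{2}{3e^2}t$ — a clause your argument never uses). (ii) Exceeding $b_{k+1}$ by the block increment does not ``exactly force'' a violation at $t=T_{k+1}$: you must absorb the starting position $M_{T_k}$, and doing so via Theorem \ref{thm:newunifmart} both destroys the claimed exact product $\prod_k(1-p_k)$ (the events are no longer the independent increment events) and costs an additive failure probability that must be budgeted inside $\delta$; the $+2\sqrt{t/3}$ term sits only inside the $\log\log$, so it cannot absorb an $O(\sqrt{T_k\log\log T_k})$ shift of the walk itself. (iii) The bookkeeping $1-\prod_k(1-p_k)\ge\delta$ under the constraint $\delta\ge 4/\log\lrp{(T-1)/3}$ is asserted, not verified; note that in the paper that constraint arises from Lemma \ref{lem:nostlbdelta} (controlling the no-stopping contribution $\evp{G_{\tau-1}\mid\tau=T}{}$), not from counting blocks. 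An epoch argument for a finite-time LIL lower bound is not impossible in principle, but with these three issues unresolved the proposal does not yet constitute a proof.
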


%\begin{thm}
%\label{thm:lillbmain}
%There are absolute constants $C_1, C_2$ such that the following is true. 
%Fix a finite time $T > C_2 \log \lrp{\frac{4}{\delta}}$, 
%and take any $\delta \leq \frac{1}{C_1}$.
%Then with probability $\leq 1 - \delta$, 
%for all $t \in [C_2 \log \lrp{\frac{4}{\delta}}, T)$ simultaneously, $\abs{M_t} \leq \frac{2 }{3 e^2 } U_{t}$ and
%$$ \abs{M_t} \leq \sqrt{ \frac{2}{3} U_{t} \left( \log \log \lrp{ \frac{ \frac{2}{3} U_{t} }{\abs{M_{t}} + 2 \sqrt{ U_{t}/3 } } } + \log \lrp{\frac{1}{C_1 \delta}} \right)} $$
%(It suffices if $C_1 = \lrp{\frac{420}{11}}^2$ and $C_2 = 164$.)
%\end{thm}

There are no previous results in this vein, to our knowledge. 
So a principal contribution of this manuscript is to 
characterize the tradeoff between $t$ and $\delta$ in uniform concentration of measure over time, 
which is not addressed by the classical LIL of Theorem \ref{thm:lilorig}. 
Theorems \ref{thm:newunifmart} and \ref{thm:lbinterestingregime} constitute a sharp finite-time version of the LIL, 
analogous to how the Hoeffding bound for large deviations is a sharp finite-time version of the CLT's Gaussian tail for a fixed time.

\subsection{Paper Outline}
The proofs of both Theorems \ref{thm:newunifmart} and \ref{thm:lbinterestingregime} are possibly of independent interest, 
and the rest of this paper details and builds on them.

The proof of Theorem \ref{thm:newunifmart} 
extends the exponential moment method, the standard way of 
proving classical Chernoff-style bounds which hold for a fixed time. 
We use a technique, 
manipulating stopping times of a particular averaged supermartingale, 
which generalizes easily to many discrete- and continuous-time martingales, 
allowing us to prove iterated-logarithm concentration bounds for them as well. 
These martingale generalizations are given in Section \ref{sec:unifbernbounds}, 
with discussion of even more general settings like continuous time.
Proof details for these concentration bounds are in Section \ref{sec:pfmainthm}.

The proof of the anti-concentration bound of Theorem \ref{thm:lbinterestingregime} 
basically inverts the argument used to prove the concentration bounds. 
We give the details in Section \ref{sec:pfrrwlb}. 
Finally, Section \ref{sec:intgsupermartlem1} contains some ancillary results, 
and formalizes extensions discussed in the previous sections.

%-------------------------------------------------------------------------------------------------------------------------------------------------------------------------------------------------------------------------------------
%-------------------------------------------------------------------------------------------------------------------------------------------------------------------------------------------------------------------------------------

\section{Uniform Concentration Bounds for Martingales}
\label{sec:unifbernbounds}

In this section, we extend the random walk concentration result of Theorem \ref{thm:newunifmart} 
to broader classes of martingales. 
Some notation must be established first. 

We study the behavior of a real-valued stochastic process $M_t$ in a filtered probability space $(\Omega, \cF, \{\cF_t\}_{t \geq 0}, P)$, 
where $M_0 = 0$ w.l.o.g. 
For simplicity, only the discrete-time case $t \in \NN$ is considered hereafter; 
the results and proofs in this manuscript extend to continuous time as well (Remark \ref{remark:ctmarts}).
Define the difference sequence $\xi_t = M_t - M_{t-1}$ for all $t$ ($\xi_t$ being $\cF_t$-measurable),
and the cumulative conditional variance and quadratic variation: 
$V_t = \sum_{i=1}^t \evp{\xi_i^2 \mid \cF_{i-1}}{}$ and $Q_t = \sum_{i=1}^t \xi_i^2$ respectively.

Also recall the following standard definitions. 
A martingale $M_t$ (resp. supermartingale, submartingale) has $\evp{\xi_t \mid \cF_{t-1}}{} = 0$ (resp. $\leq 0, \;\geq 0$) for all $t$. 
A stopping time $\tau$ is a function on $\Omega$ such that $\{ \tau \leq t \} \in \cF_t \;\;\forall t$; 
notably, $\tau$ can be infinite with positive probability \cite{D10}.

\subsection{Uniform Second-Moment Martingale Concentration}
\label{sec:unifbern}

A few pertinent generalizations of Theorem \ref{thm:newunifmart} are now presented. 
The first is a direct iterated-logarithm analogue of Hoeffding's inequality \cite{H63} for martingales.

\begin{thm}[Uniform Hoeffding Bound]
\label{thm:unifah}
Let $M_t$ be a martingale, and 
suppose there are constants $\{c_i\}_{i \geq 1}$ such that for all $t \geq 1$, $\abs{M_t - M_{t-1}} \leq c_t$ w.p. 1. 
Fix any $ \delta < 1$ and define $\tau_0 = \min \left\{s : \sum_{i=1}^s c_i^2 \geq 173 \log \lrp{\frac{4}{\delta}} \right\}$.
Then with probability $\geq 1 - \delta$, 
for all $t \geq \tau_0$ simultaneously, 
$\abs{M_t} \leq \frac{\sum_{i=1}^t c_i^2}{e^2 \lrp{1 + \sqrt{1/3}}}$ and 
$$ \abs{M_t} \leq \sqrt{3 \lrp{\sum_{i=1}^t c_i^2} \lrp{ 2 \log \log \lrp{\frac{3 \lrp{\sum_{i=1}^t c_i^2}}{ 2 \abs{M_t} }} + \log \left( \frac{2}{\delta} \right) }} $$
\end{thm}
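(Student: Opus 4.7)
The plan is to observe that the Rademacher-walk proof of Theorem \ref{thm:newunifmart} uses only one analytic fact about $M_t$: for every $\lambda \in \RR$, the process
$$Z_t^{\lambda} := \expp{\lambda M_t - \frac{\lambda^2}{2} \sum_{i=1}^{t} c_i^2}$$
(with $c_i \equiv 1$ in the Rademacher case) is a nonnegative supermartingale adapted to $\{\cF_t\}_{t \geq 0}$ with $Z_0^{\lambda} = 1$. Everything else in the proof of Theorem \ref{thm:newunifmart} -- averaging $Z_t^{\lambda}$ over a cleverly chosen measure on $\lambda$ to produce a single supermartingale that tracks the iterated-logarithm envelope, and the stopping-time manipulations that convert a Ville-style maximal inequality into an implicit bound on $\abs{M_t}$ -- is blind to the specific Rademacher structure and uses only this exponential supermartingale property.

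The first step is therefore to verify the supermartingale property in the present setting. Since $\xi_i = M_i - M_{i-1}$ satisfies $\evp{\xi_i \mid \cF_{i-1}}{} = 0$ and $\abs{\xi_i} \leq c_i$ almost surely, Hoeffding's lemma applied conditionally on $\cF_{i-1}$ gives
$$\evp{\expp{\lambda \xi_i} \mid \cF_{i-1}}{} \leq \expp{\frac{\lambda^2 c_i^2}{2}}$$
for every $\lambda \in \RR$. Multiplying these bounds telescopically yields $\evp{Z_t^{\lambda} \mid \cF_{t-1}}{} \leq Z_{t-1}^{\lambda}$, and nonnegativity is immediate.

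The second step is to apply the averaged-supermartingale machinery from the proof of Theorem \ref{thm:newunifmart} verbatim, substituting $\tilde{V}_t := \sum_{i=1}^t c_i^2$ wherever $t$ appears in the cumulative-variance role. Under this substitution, the Rademacher-case threshold $t \geq 173 \log(4/\delta)$ becomes $\tilde{V}_t \geq 173 \log(4/\delta)$, which by construction is exactly the event $\{t \geq \tau_0\}$; the linear bound $\abs{M_t} \leq t/[e^2(1+\sqrt{1/3})]$ becomes $\abs{M_t} \leq \tilde{V}_t/[e^2(1+\sqrt{1/3})]$; and the implicit iterated-log inequality has $t$ replaced by $\tilde{V}_t$ in each of its three appearances. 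The factor of $2$ inside $\log(4/\delta)$ arises, as in Theorem \ref{thm:newunifmart}, from a union bound over the one-sided events $M_t > 0$ and $M_t < 0$ (each handled by $Z_t^{\lambda}$ restricted to $\lambda > 0$ and $\lambda < 0$ respectively).

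I do not anticipate a genuinely new obstacle, since the substantive work -- choosing the mixture measure on $\lambda$, controlling the resulting Ville-type maximal inequality uniformly in $t$, and inverting the resulting transcendental bound to recover an explicit inequality for $\abs{M_t}$ -- has already been carried out inside the proof of Theorem \ref{thm:newunifmart}. The only point that warrants a careful check is that this machinery depends on the process solely through the predictable envelope $\sum_{i=1}^t c_i^2$ and not through the step count $t$ itself; this is true precisely because $Z_t^{\lambda}$ above involves $t$ only through $\tilde{V}_t$. If a sticking point does arise, it is likely to be a slightly more delicate tracking of absolute constants so that the universal constant $173$ can be carried through unchanged rather than inflated; but this is bookkeeping rather than a conceptual issue.
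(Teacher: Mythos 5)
Your proposal is correct and follows the same architecture as the paper's proof: Section \ref{sec:mainpfgen} explicitly reduces Theorem \ref{thm:unifah} to exhibiting a supermartingale of the form $\expp{\lambda M_t - \frac{\lambda^2}{2}U_t}$ for a nondecreasing process $U_t$, after which the stopping-time and $\lambda$-mixture machinery of Section \ref{sec:pfmainthm} is rerun with $U_t$ in place of $t$. The one substantive difference is the choice of supermartingale. You take $U_t = \sum_{i=1}^t c_i^2$ directly via the conditional Hoeffding lemma; the paper instead invokes Lemma \ref{lem:qmartconstr}, which yields the supermartingale $\expp{\lambda M_t - \frac{\lambda^2}{6}\lrp{2V_t + Q_t}}$, i.e.\ $U_t = \frac{1}{3}(2V_t+Q_t)$, and only then uses $\abs{\xi_i} \le c_i$ to dominate this random exponent process by $\sum_{i=1}^t c_i^2$. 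Your choice is arguably cleaner for this particular theorem: your $U_t$ is deterministic and coincides exactly with the quantity appearing in the statement, so the initial-time condition $\tau_0$ and the final bound come out directly, whereas with the paper's random $U_t$ one must separately check that the $\tau_0$ defined via $\sum_i c_i^2$ guarantees the lower bound on $U_{\tau}$ needed inside the bootstrap argument (Theorem \ref{thm:varunifub}). The paper's construction buys a variance-sensitive exponent reusable for Bernstein-type statements, but for Theorem \ref{thm:unifah} as stated both routes deliver the conclusion. One small correction: the factor of $2$ inside $\log\lrp{\frac{4}{\delta}}$ does not come from a union bound over the signs of $M_t$ (two-sidedness is already absorbed by the symmetric mixture over $\pm\lambda$); it comes from splitting $\delta$ evenly between the bootstrap LLN event $A_{\delta/2}$ and the main stopping-time event.
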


%\akshay{Practice!}
%Theorem Theorem \ref{thm:unifbmart} is particularly convenient for many applications  
%because the cumulative conditional variance $V_t$ marginalizes over the present time $t$, 
%and therefore can often be controlled usefully in practice.

%Another generalization of Theorem \ref{thm:newunifmart} is as follows.
%
%\begin{thm}
%\label{thm:unifqmart}
%Let $M_t$ be a martingale. 
%Fix any $ \delta < 1$ and define 
%%\newline
%$\tau_0 = \min \left\{s : \frac{1}{3} (2 V_s + Q_s) \geq 173 \log \lrp{\frac{4}{\delta}} \right\}$. 
%Then with probability $\geq 1 - \delta$, 
%for all $t \geq \tau_0$ simultaneously, 
%$\abs{M_t} \leq \frac{2 V_t + Q_t}{3 e^2 \lrp{1 + \sqrt{1/3}}}$ and 
%$$ \abs{M_t} \leq \sqrt{(2 V_t + Q_t) \lrp{ 2 \log \log \lrp{\frac{2 V_t + Q_t}{ 2 \abs{M_t} }} + \log \left( \frac{2}{\delta} \right) }} $$
%\end{thm}
%
%If each difference iterate is assumed a.s. bounded in Theorem \ref{thm:unifqmart}, 

A uniform counterpart to Bernstein's inequality can be derived similarly.

\begin{thm}[Uniform Bernstein Bound]
\label{thm:unifbmart}
Let $M_t$ be a martingale. 
Suppose the difference sequence is uniformly bounded: 
$\abs{M_t - M_{t-1}} \leq e^2$ w.p. $1$ for all $t \geq 1$. 
Fix any $ \delta < 1$ and define $\tau_0 = \min \left\{s : 2 (e-2) V_s \geq 173 \log \lrp{\frac{4}{\delta}} \right\}$.  
Then with probability $\geq 1 - \delta$, 
for all $t \geq \tau_0$ simultaneously, 
$\abs{M_t} \leq \frac{2 (e-2)}{e^2 \lrp{1 + \sqrt{1/3}}} V_t$ and 
$$ \abs{M_t} \leq \sqrt{6 (e-2) V_t \lrp{ 2 \log \log \lrp{\frac{3 (e-2) V_t}{ \abs{M_t} }} + \log \left( \frac{2}{\delta} \right) }} $$
\end{thm}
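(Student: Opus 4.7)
The plan is to run the proof of Theorem \ref{thm:unifah} essentially verbatim, under the single substitution $\sum_{i \le t} c_i^2 \mapsto 2(e-2) V_t$ throughout. The only new ingredient is the Bernstein-style single-step exponential-moment inequality; the integrated-supermartingale and stopping-time machinery are unchanged.

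Specifically, since $\abs{\xi_i} \le e^2$ a.s., for any $\lambda \in (0, e^{-2}]$ we have $\abs{\lambda \xi_i} \le 1$, and we can invoke the elementary calculus inequality $e^x \le 1 + x + (e-2) x^2$ valid on $\abs{x} \le 1$. Combined with $\evp{\xi_i \mid \cF_{i-1}}{} = 0$ and $1+y \le e^y$, this gives
$$ \evp{\exp(\lambda \xi_i) \mid \cF_{i-1}}{} \le 1 + (e-2)\lambda^2 \evp{\xi_i^2 \mid \cF_{i-1}}{} \le \expp{(e-2)\lambda^2 \evp{\xi_i^2 \mid \cF_{i-1}}{}}, $$
so $L_t(\lambda) := \expp{\lambda M_t - (e-2)\lambda^2 V_t}$ is a nonnegative supermartingale for every $\lambda \in (0, e^{-2}]$. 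This is exactly the Bernstein analogue of the Hoeffding supermartingale $\expp{\lambda M_t - \frac{\lambda^2}{2} \sum_{i \le t} c_i^2}$ under the identification $\frac{1}{2}\sum c_i^2 \leftrightarrow (e-2) V_t$. Under this identification, the linear bound $\abs{M_t} \le \frac{\sum c_i^2}{e^2 (1+\sqrt{1/3})}$, the square-root bound $\abs{M_t} \le \sqrt{3 (\sum c_i^2)(\cdots)}$, and the initialization $\sum_{i \le \tau_0} c_i^2 \ge 173 \log(4/\delta)$ of Theorem \ref{thm:unifah} transform line-by-line into the two claimed bounds and the threshold $2(e-2) V_{\tau_0} \ge 173 \log(4/\delta)$ stated here.

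The anticipated obstacle is purely bookkeeping: one must confirm that the proof of Theorem \ref{thm:unifah} uses the envelope $\sum c_i^2$ only through the supermartingale property of the Hoeffding exponential supermartingale, so that every constant (the $173$, the $3$, the $1+\sqrt{1/3}$) is preserved under the substitution. The admissible range $\lambda \in (0, e^{-2}]$ forced by our moment bound matches the range over which the Hoeffding proof mixes $\lambda$ (as witnessed by the $e^{-2}$ appearing in its linear bound), so no adjustment to the mixing measure is required. A minor point to check is that $V_t$, although sample-path dependent unlike $\sum c_i^2$, is $\cF_{t-1}$-measurable (since each $\evp{\xi_i^2 \mid \cF_{i-1}}{}$ is $\cF_{i-1}$-measurable), and this is all that the stopping-time arguments in Theorem \ref{thm:unifah} require.
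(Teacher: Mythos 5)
Your proposal is correct and is essentially the paper's own argument: the paper proves this theorem by substituting the Bernstein exponential supermartingale $\expp{\lambda M_t - \lambda^2 (e-2) V_t}$ (its Lemma \ref{lem:bmartconstr}, proved exactly as you do via $e^x \leq 1 + x + (e-2)x^2$ for $\lambda \in [-e^{-2}, e^{-2}]$) into the generic proof template of Theorem \ref{thm:genunifmart} with $U_t = 2(e-2)V_t$, which is precisely your identification $\tfrac{1}{2}\sum c_i^2 \leftrightarrow (e-2)V_t$. Your closing observations --- that the template uses the envelope only through the supermartingale property on $\lambda \in [-e^{-2}, e^{-2}]$, and that the now-random $V_t$ is predictable so the stopping-time arguments still apply --- are exactly the points the paper relies on in Section \ref{sec:mainpfgen}.
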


As with other Bernstein-type inequalities, the boundedness assumption on $\xi_t$ can be replaced by 
higher moment conditions (e.g. Lemma \ref{lem:bernubrelaxconstr} and the preceding discussion). 

The proofs of Theorems \ref{thm:unifah} and \ref{thm:unifbmart} are nearly identical to that of Theorem \ref{thm:newunifmart}. 
Further details on these topics are given in Section \ref{sec:mainpfgen}.

\subsection{Discussion}
\label{sec:extensions}

Some remarks on our results are in order. 

\begin{nremark}[Extension to Continuous Time]
\label{remark:ctmarts}
In many cases, our uniform results can be generalized to continuous-time martingales with an almost unchanged proof 
(e.g., this is true for the Wiener process $W_t$). 
Further explanation of this depends on the proof details, 
and therefore is deferred to Section \ref{sec:pfdiscuss}.
\qedinpw
\end{nremark}

\begin{nremark}[Extension to Super(sub)martingale Bounds]
\label{remark:doobdecomp}
%\normalfont
One-sided variants of Theorem \ref{thm:newunifmart} hold in many cases for super- (resp. sub-) martingales, 
giving a uniform upper (resp. lower) bound identical to that in Theorem \ref{thm:newunifmart}. 
When the Doob-Meyer decomposition (\cite{D10}) applies, 
%as is frequently the case, 
such bounds are immediate.
\qedinpw
\end{nremark}

\begin{nremark}[Extension to Sharpen Constants and Initial Time Conditions]
\label{remark:inittime}
\normalfont
The leading proportionality constant on the $\sqrt{t \log \log t}$ term in Theorem \ref{thm:newunifmart} is $\sqrt{6}$, 
clearly suboptimal in the limit $t \to \infty$ by the LIL. 
A modification to our proof lowers this constant arbitrarily close to its optimal value of $\sqrt{2}$ for high enough $t$, 
suggesting that our proof technique is quite tight. 
The tightened proofs generalize results of Robbins and Siegmund 
\cite{RS70, R70} to the non-asymptotic case, and to general martingales.
Section \ref{sec:constschainatoms} contains further details. 

The upper concentration bounds in this manuscript include an initial time condition $t \geq \tau_0$. 
% for $\tau_0 = \min \left\{ s: U_s \geq 173 \log \left( \frac{4}{\delta} \right) \right\}$, 
% where $U_t$ is a nondecreasing process measuring cumulative variance. 
For Theorem 2, it is straightforward to remove this condition without degrading the result: 
if $t < \tau_0$, 
an explicit union bound over fixed-time Hoeffding bounds immediately gives that 
$\abs{M_t} \leq \cO{\log \lrp{\frac{1}{\delta}}}$ w.h.p. for all $t < \tau_0$. 
Combining this with Theorem 2 gives a uniform bound, over all times $t \geq 0$, of 
$\abs{M_t} \leq \cO{ \sqrt{ t \lrp{ \log \log t + \log \frac{1}{\delta} }} + \log \lrp{\frac{1}{\delta}} }$. 
The $t$-independent additive $\log \lrp{\frac{1}{\delta}} $ term matches 
standard Bernstein/Bennett concentration inequalities \cite{BLM13} for a fixed time. 

To extend this argument to the upper bounds of Theorems \ref{thm:unifah} and \ref{thm:unifbmart}, 
such an explicit union bound is no longer usable. 
But our proof techniques using stopping times extend naturally to these cases -- see Section \ref{sec:inittime}.
\qedinpw
\end{nremark}

%\begin{nremark}[Relation to Other Maximal Bounds] 
When considering uniform martingale concentration over all times without an explicit union bound, 
the basic tools are Doob's maximal inequality for nonnegative supermartingales (\cite{D10}, Exercise 5.7.1), 
Hoeffding's maximal inequality \cite{H63}, 
and Freedman's Bernstein-type inequality \cite{F75}. 
These can all be easily proved with the techniques of this manuscript 
(similar to the proof of Theorem \ref{thm:varunifub}).
However, the latter two results are fundamentally weaker than ours, 
in that they only hold uniformly over a finite time interval, 
and degrade to triviality as the interval grows infinite. 
%\akshay{Mention the $\log t$ vs. $\log \log V_t$ is about the variance term.}
%\qedinpw
%\end{nremark}

Our uniform Hoeffding/Bernstein-type bounds in Section \ref{sec:unifbern} 
achieve optimal rates in the variance and $\delta$ parameters as well. 
These generalize martingale LILs like the classic result of Stout \cite{S70}, 
which for large classes of martingales makes a statement similar to Theorem \ref{thm:lilorig}, 
except concerning the ratio $\abs{M_t} / \sqrt{V_t \log \log V_t}$. 
The finite-time upper Hoeffding-bound LIL can be proved with an epoch-based approach \cite{JMNB14} standard in proofs of the (asymptotic) LIL \cite{D10}. 
Our technique can be viewed as generalizing that idea using stopping time manipulations.

Theorem \ref{thm:newunifmart}'s tradeoff between $t$ and $\delta$ 
describes some of the interplay between the CLT and the LIL when uniform bounds are taken of partial sums of suitable i.i.d. variables. 
A similar question has been explored with a different statistical emphasis by Darling and Erd\H{o}s \cite{DE56} and subsequent work, 
though only as $t \to \infty$ to our knowledge.

%-------------------------------------------------------------------------------------------------------------------------------------------------------------------------------------------------------------------------------------

%\akshay{Add mention of tests to ask YP about: 
%EFKP test, 
%Dvoretzky-Erdos test for liminf of BM,
%}

%-------------------------------------------------------------------------------------------------------------------------------------------------------------------------------------------------------------------------------------
%-------------------------------------------------------------------------------------------------------------------------------------------------------------------------------------------------------------------------------------
%-------------------------------------------------------------------------------------------------------------------------------------------------------------------------------------------------------------------------------------

\section{Proof of Theorem \ref{thm:newunifmart} and Concentration Bounds}
\label{sec:pfmainthm}
Define the (deterministic) process $U_t = t$ 
(a notational convenience, to ease extension of this proof to the martingale case discussed in Section \ref{sec:unifbernbounds}).
Also define $k := \frac{1}{3}$ and $\lambda_0 := \lrp{e^2 \lrp{1 + \sqrt{k}}}^{-1}$.

In this section, we prove the following bound, which is a slightly more precise version of Theorem \ref{thm:newunifmart}:
\begin{thm}
\label{thm:genunifmart}
Let $M_t$ be a Rademacher random walk. 
Fix any $ \delta < 1$ and define the time $\tau_0 = \min \left\{s : U_s \geq \frac{2}{\lambda_0^2} \log \lrp{\frac{4}{\delta}} \right\} $.
Then with probability $\geq 1 - \delta$, 
for all $t \geq \tau_0$ simultaneously, 
$\abs{M_t} \leq \lambda_0 U_t$ and 
$$ \abs{M_t} \leq \sqrt{ \frac{2 U_{t}}{1-k} \log \left( \frac{ 2 \log^2 \lrp{\frac{U_{t}}{ \lrp{1 - \sqrt{k}} \abs{ M_{t}} } } }{\delta} \right)}  $$
\end{thm}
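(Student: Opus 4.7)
My plan is to use the exponential moment method, converted from a fixed-time Chernoff bound into a time-uniform bound via an averaged supermartingale and Ville's maximal inequality. The Rademacher moment-generating-function estimate $\cosh(\lambda) \leq \expp{\lambda^2/2}$ implies that for every $\lambda \in \RR$ the process $L_t^{(\lambda)} := \expp{\lambda M_t - \lambda^2 U_t/2}$ is a nonnegative supermartingale starting at $1$. A single $\lambda$ is inadequate for a time-uniform bound, so I would discretize: put a geometric grid $\eta_j := \lambda_0 (1 - \sqrt{k})^j$ on the interesting range $(0, \lambda_0]$ of $\lambda$, choose nonnegative weights $w_j$ with $\sum_j w_j \leq 1$ whose decay satisfies $\log(1/w_j) \sim 2 \log(j+1)$, and form the two-sided mixture
\[
\bar{L}_t \;:=\; \sum_{j \geq 0} w_j \lrp{L_t^{(\eta_j)} + L_t^{(-\eta_j)}}.
\]
This is again a nonnegative supermartingale, and Ville's inequality gives $\pr{\sup_{t \geq 0} \bar{L}_t \geq 4/\delta} \leq \delta$. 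I would condition on the resulting ``good event'' for the remainder of the proof; on it, dropping all but one summand yields $L_t^{(\pm \eta_j)} \leq 4/(w_j \delta)$ simultaneously for every $j \geq 0$ and every $t$.

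The deduction then proceeds in two stages. First, applying the estimate with $j = 0$ gives $\lambda_0 |M_t| - \lambda_0^2 U_t / 2 \leq \log(4/(w_0 \delta))$, which rearranges to $|M_t| \leq \lambda_0 U_t / 2 + \log(4/(w_0 \delta))/\lambda_0$; the definition $\tau_0 := \min\{s : U_s \geq (2/\lambda_0^2) \log(4/\delta)\}$ is precisely what is needed to conclude $|M_t| \leq \lambda_0 U_t$ for all $t \geq \tau_0$, establishing the linear bound. Second, given this linear bound, let $\lambda^*(t) := |M_t|/U_t \in (0, \lambda_0]$ and let $j^*(t)$ be the smallest $j \geq 0$ with $\eta_j \leq \lambda^*(t)$. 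By the geometric spacing the ratio $c := \eta_{j^*}/\lambda^* \in (1 - \sqrt{k}, 1]$, so the algebraic identity
\[
\eta_{j^*} |M_t| - \frac{\eta_{j^*}^2 U_t}{2} \;=\; \frac{M_t^2}{2 U_t} \lrp{1 - (1 - c)^2} \;\geq\; (1 - k) \frac{M_t^2}{2 U_t}
\]
combined with $L_t^{(\pm \eta_{j^*})} \leq 4/(w_{j^*} \delta)$ (with the sign chosen to match $M_t$) yields $M_t^2 \leq \frac{2 U_t}{1 - k} \log(4/(w_{j^*} \delta))$. Inverting the grid relation $\eta_{j^*} > (1 - \sqrt{k}) \lambda^*$ shows that $j^* + 1$ is of order $\log(U_t/((1 - \sqrt{k}) |M_t|))$, so $\log(1/w_{j^*}) \lesssim 2 \log \log(U_t/((1 - \sqrt{k}) |M_t|))$, producing the iterated-log bound of the theorem.

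The principal obstacle I foresee is the careful book-keeping of constants: the grid ratio $1 - \sqrt{k}$, the normalization and decay of the $w_j$, the union-bound factor $4$, and the $\tau_0$ threshold must all be synchronized so that the prefactor $2/(1 - k)$ and the ``$2 \log^2$'' factor inside the outer logarithm match exactly, without spurious multiplicative slack. The probabilistic architecture (exponential supermartingale, geometric mixture, Ville's inequality) is the standard template and is quite routine; the delicate part is absorbing the base change in the logarithm arising from the grid spacing into the weight normalization, so that $\log$ (rather than $\log_{1/(1 - \sqrt{k})}$) and the argument $U_t/((1 - \sqrt{k}) |M_t|)$ (rather than $\lambda_0 U_t / |M_t|$) appear inside the $\log^2$ exactly as stated.
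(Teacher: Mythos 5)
Your architecture is sound and constitutes a genuinely different implementation of the same underlying idea. The paper does not discretize: it mixes the supermartingales $X_t^\lambda=\expp{\lambda M_t-\frac{\lambda^2}{2}U_t}$ against the continuous density $P_\lambda(d\lambda)=\frac{d\lambda}{\abs{\lambda}\log^2(1/\abs{\lambda})}$ on $[-e^{-2},e^{-2}]$, lower-bounds the resulting integral over the window $\lambda\in\left[(1-\sqrt{k})\frac{\abs{M_t}}{U_t},\,(1+\sqrt{k})\frac{\abs{M_t}}{U_t}\right]$ (Lemma \ref{lem:intgsupermartlem1}), and controls the stopped mixture by optional stopping at a bespoke stopping time rather than by Ville's maximal inequality. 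It also obtains the linear bound $\abs{M_t}\le\lambda_0 U_t$ as a separate bootstrap result (Theorem \ref{thm:varunifub}, a two-point mixture over $\lambda=\pm\lambda_0$) and then \emph{conditions} on that event, which is necessary because the integration window must sit inside the support $[0,e^{-2}]$; this conditioning is precisely the source of the $4/\delta$ (a $\delta/2+\delta/2$ split) and of the measure-shift Lemma \ref{lem:shiftoutofA}. Your device of extracting the linear bound from the $j=0$ term of the same mixture is tidier and avoids the conditioning entirely. Your selection $\eta_{j^*}\in\bigl((1-\sqrt{k})\lambda^*,\lambda^*\bigr]$ plays exactly the role of the paper's integration window, and your identity $\eta\abs{M_t}-\frac{\eta^2 U_t}{2}=\bigl(1-(1-c)^2\bigr)\frac{M_t^2}{2U_t}$ is the same completion of the square. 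Either route proves a theorem of the stated form.

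The one concrete caveat is the constant-matching that you defer to bookkeeping: with the weights you propose it does not close, and this is not removable slack. The number of grid points needed to reach scale $\lambda^*=\abs{M_t}/U_t$ is $j^*\approx\log(\lambda_0/\lambda^*)/\log\frac{1}{1-\sqrt{k}}$, and with $k=1/3$ the denominator is $\log(1/0.423)\approx0.86<1$, so $j^*+1$ exceeds $\log\lrp{\frac{U_t}{(1-\sqrt{k})\abs{M_t}}}$ by a multiplicative factor $\approx 1.16$ asymptotically. Taking $w_j\propto(j+1)^{-2}$ with $\sum_j w_j\le1$ forces the normalizer $6/\pi^2$, so $\log\frac{4}{w_{j^*}\delta}=\log\frac{1}{\delta}+2\log(j^*+1)+\log\frac{2\pi^2}{3}$, which exceeds the target $\log\frac{2}{\delta}+2\log\log\lrp{\frac{U_t}{(1-\sqrt{k})\abs{M_t}}}$ by an additive constant of roughly $1.5$ that does not vanish as $t\to\infty$; you therefore prove the bound with the ``$2\log^2(\cdot)$'' replaced by ``$C\log^2(\cdot)$'' for an absolute $C$ around $9$. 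The continuous mixture evades both losses because the antiderivative of $\frac{1}{\lambda\log^2(1/\lambda)}$ is exactly $\frac{1}{\log(1/\lambda)}$, so the mass of the window is $\log\frac{1+\sqrt{k}}{1-\sqrt{k}}\big/\bigl(\log\frac{U_t}{(1+\sqrt{k})\abs{M_t}}\log\frac{U_t}{(1-\sqrt{k})\abs{M_t}}\bigr)$ with no grid-counting or normalization overhead. Likewise your $j=0$ step requires $U_t\ge\frac{2}{\lambda_0^2}\log\frac{4}{w_0\delta}$, which is strictly more restrictive than the stated threshold $\tau_0$ (defined via $\log\frac{4}{\delta}$) since $w_0<1$. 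None of this breaks the form of the result, but the theorem \emph{as stated}, with its particular $\tau_0$ and its particular ``$2\log^2$'', is calibrated to the continuous mixture and will not fall out of the discrete one without accepting worse absolute constants.
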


The proof invokes the Optional Stopping Theorem. %in martingale theory,
\begin{thm}[Optional Stopping for Nonnegative Supermartingales (\cite{D10}, Theorem 5.7.6)]
\label{thm:optstoppingsupermart}
Let $M_t$ be a nonnegative supermartingale. Then if $\tau$ is a (possibly infinite)
stopping time, $\evp{M_\tau}{} \leq \evp{M_0}{}$.
\end{thm}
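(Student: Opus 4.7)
The plan is the exponential moment method combined with a weighted mixture of exponential supermartingales indexed along a geometric grid of tilt parameters, followed by optional stopping. Concretely, for each $\lambda \in (0,\lambda_0]$ and sign $\epsilon \in \{\pm 1\}$, the process
$$ Z_t^{\epsilon\lambda} \;:=\; \expp{\epsilon \lambda M_t - \tfrac{\lambda^2}{2} U_t} $$
is a nonnegative supermartingale started at $1$, since $\evp{e^{\lambda \sigma_i}}{} = \cosh(\lambda) \leq e^{\lambda^2/2}$ for each Rademacher increment $\sigma_i$. Any convex combination is therefore also a nonnegative supermartingale with initial expectation $1$.

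First I would introduce a geometric grid $\lambda_j := (\sqrt{k})^{\,j}\, \lambda_0$ for $j = 0, 1, 2, \dots$, together with weights $w_j \propto (j+1)^{-2}$ normalized so $\sum_j w_j = 1$, and form the averaged supermartingale
$$ \bar{Z}_t \;:=\; \sum_{j=0}^{\infty} w_j \cdot \tfrac{1}{2}\lrp{Z_t^{\lambda_j} + Z_t^{-\lambda_j}}. $$
I would then let $\tau$ be the first time $t \geq \tau_0$ at which both asserted inequalities of Theorem~\ref{thm:genunifmart} fail simultaneously, and $\tau = \infty$ otherwise. Applying Theorem~\ref{thm:optstoppingsupermart} to $\bar{Z}_{t \wedge T}$ and letting $T \to \infty$ (via Fatou) yields $\evp{\bar{Z}_\tau \mathbf{1}\{\tau < \infty\}}{} \leq 1$, so Markov's inequality gives $\pr{\bar{Z}_\tau \geq 1/\delta,\ \tau < \infty} \leq \delta$.

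The main analytic step is then to show that on $\{\tau < \infty\}$ one must in fact have $\bar{Z}_\tau \geq 1/\delta$, yielding the desired bound on $\pr{\tau < \infty}$. On this event the first bound $\abs{M_\tau} \leq \lambda_0 U_\tau$ fails or the second does; in the former case a single term $Z_\tau^{\pm \lambda_0}$ already exceeds $1/\delta$ using the choice of $\tau_0$. In the latter case the optimal tilt $\lambda^* := \abs{M_\tau}/U_\tau$ lies in $(0,\lambda_0]$, so one can choose the unique $j^*$ with $\lambda_{j^*} \leq \lambda^* < \lambda_{j^*-1}$. The geometric spacing ensures
$$ \lambda_{j^*} \abs{M_\tau} - \tfrac{1}{2}\lambda_{j^*}^2 U_\tau \;\geq\; \tfrac{1-k}{2}\cdot \tfrac{\abs{M_\tau}^2}{U_\tau}, $$
so $Z_\tau^{\epsilon\lambda_{j^*}}$ (for the sign $\epsilon$ matching that of $M_\tau$) is at least $\expp{\frac{(1-k) \abs{M_\tau}^2}{2 U_\tau}}$. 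Meanwhile, the geometric spacing also gives $j^* + 1 = \cO{\log\lrp{U_\tau/((1-\sqrt{k})\abs{M_\tau})}}$, so $w_{j^*} \gtrsim \log^{-2}\lrp{U_\tau/((1-\sqrt{k})\abs{M_\tau})}$. Plugging the assumed violation of the second bound into these two estimates shows $\bar{Z}_\tau \geq \tfrac{1}{2} w_{j^*} Z_\tau^{\epsilon\lambda_{j^*}} \geq 1/\delta$, which is the contradiction.

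The hard part is the last step: carefully tuning the grid ratio and weight normalization so that the constants $k = 1/3$ and $\lambda_0 = \lrp{e^2\lrp{1+\sqrt{k}}}^{-1}$ come out exactly as in the statement of the theorem, and the $(1-\sqrt{k})$ factor inside the inner $\log^2$ is produced by the geometric spacing. The role of the initial-time cutoff $\tau_0$ is twofold: it makes the starting value $U_{\tau_0}$ large enough that Markov's inequality with threshold $1/\delta$ is not vacuous (i.e. the argument of the outer log exceeds $1$), and it ensures that the approximating tilt $\lambda_{j^*}$ is meaningfully bounded away from $\lambda_0$ so the sub-Gaussian bound $\cosh(\lambda) \leq e^{\lambda^2/2}$ is used only where it is sharp enough.
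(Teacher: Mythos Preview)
Your proposal does not address the stated theorem. The statement you were asked to prove is Theorem~\ref{thm:optstoppingsupermart}, the Optional Stopping Theorem for nonnegative supermartingales, which the paper does \emph{not} prove at all; it is quoted verbatim from Durrett and used as a black-box tool. Your write-up is instead a sketch of Theorem~\ref{thm:genunifmart} (the uniform iterated-logarithm bound for the Rademacher walk), and you even invoke Theorem~\ref{thm:optstoppingsupermart} as a step inside that sketch. So as a proof of the stated result it is a non sequitur.

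If we interpret your proposal as an attempted proof of Theorem~\ref{thm:genunifmart}, it is a genuinely different route from the paper's. The paper uses a \emph{continuous} mixture $P_\lambda(d\lambda)=\frac{d\lambda}{|\lambda|(\log 1/|\lambda|)^2}$ on $[-e^{-2},e^{-2}]\setminus\{0\}$, together with a two-stage argument: first a bootstrap LLN (Theorem~\ref{thm:varunifub}) establishing $|M_t|\le\lambda_0 U_t$ on a good event $A_{\delta/2}$, and then the main bound proved \emph{conditionally} on $A_{\delta/2}$ via Lemma~\ref{lem:intgsupermartlem1}, with a final measure-shift (Lemma~\ref{lem:shiftoutofA}) back to $\Omega$. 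Your approach replaces the continuous mixture by a discrete geometric grid $\lambda_j=(\sqrt{k})^j\lambda_0$ with polynomial weights $w_j\propto(j+1)^{-2}$, and folds both cases ($|M_\tau|>\lambda_0 U_\tau$ and the iterated-log violation) into a single averaged supermartingale, avoiding the bootstrap and the conditioning on $A_\delta$. This ``peeling'' style argument is conceptually simpler and closer in spirit to the epoch-based LIL proofs the paper mentions in Section~\ref{sec:extensions}; the paper's continuous mixture, on the other hand, connects directly to the Robbins--Siegmund line of work and extends more transparently to the sharper constants of Section~\ref{sec:constschainatoms}.

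Two cautions on your sketch. First, your stopping time is misdefined: you write ``the first time\dots both asserted inequalities fail simultaneously,'' but to prove the theorem you need $\tau$ to be the first time \emph{at least one} fails (exactly as in \eqref{deftauub}); otherwise your case split in the next paragraph does not cover the event you need to bound. Second, the constants need care: in the case $|M_\tau|>\lambda_0 U_\tau$ you need $w_0\cdot\tfrac12\cdot\exp(\lambda_0^2 U_{\tau_0}/2)\ge 1/\delta$, and with $w_0=6/\pi^2$ and $U_{\tau_0}\ge\frac{2}{\lambda_0^2}\log(4/\delta)$ this does go through, but you should verify that the normalization of the $w_j$ and the geometric ratio $\sqrt{k}$ actually reproduce the factor $2/\log^2(\cdot)$ and the $(1-\sqrt{k})$ inside the iterated logarithm, rather than merely $\cO{\cdot}$ versions of them.
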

The version we use here explicitly exploits the favorable convergence properties of nonnegative supermartingales 
when the stopping time is infinite.

Our argument begins by appealing to a standard exponential supermartingale construction.

\begin{lem}
\label{lem:supermartconstr}
The process $X_t^\lambda := \expp{ \lambda M_t - \frac{\lambda^2}{2} U_t}$ is a supermartingale for any $\lambda \in \RR$.
\end{lem}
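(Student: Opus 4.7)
The plan is to verify the supermartingale property by a one-step computation, exploiting that the increments of the Rademacher walk are i.i.d.\ and bounded. Nonnegativity and integrability of $X_t^\lambda$ are immediate (the process is an exponential, and each $M_t$ is bounded by $t$ in absolute value, so $X_t^\lambda \leq e^{|\lambda| t}$), so the only real content is the conditional expectation inequality $\mathbb{E}[X_t^\lambda \mid \mathcal{F}_{t-1}] \leq X_{t-1}^\lambda$.

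First I would factor the process multiplicatively. Since $M_t - M_{t-1} = \sigma_t$ and $U_t - U_{t-1} = 1$, we have
\[
X_t^\lambda \;=\; X_{t-1}^\lambda \cdot \exp\!\left(\lambda \sigma_t - \tfrac{\lambda^2}{2}\right).
\]
Because $X_{t-1}^\lambda$ is $\mathcal{F}_{t-1}$-measurable and $\sigma_t$ is independent of $\mathcal{F}_{t-1}$ (or more precisely has conditional distribution Rademacher given $\mathcal{F}_{t-1}$), taking conditional expectations yields
\[
\mathbb{E}\!\left[X_t^\lambda \mid \mathcal{F}_{t-1}\right] \;=\; X_{t-1}^\lambda \cdot e^{-\lambda^2/2} \cdot \mathbb{E}\!\left[e^{\lambda \sigma_t} \mid \mathcal{F}_{t-1}\right] \;=\; X_{t-1}^\lambda \cdot e^{-\lambda^2/2} \cdot \cosh(\lambda).
\]

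The remaining step is the classical Hoeffding MGF bound $\cosh(\lambda) \leq e^{\lambda^2/2}$, which follows from comparing Taylor series term by term:
\[
\cosh(\lambda) \;=\; \sum_{n=0}^\infty \frac{\lambda^{2n}}{(2n)!} \;\leq\; \sum_{n=0}^\infty \frac{\lambda^{2n}}{2^n n!} \;=\; e^{\lambda^2/2},
\]
using $(2n)! \geq 2^n n!$. Substituting gives $\mathbb{E}[X_t^\lambda \mid \mathcal{F}_{t-1}] \leq X_{t-1}^\lambda$, establishing the supermartingale property.

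There is really no obstacle here; the computation is entirely routine. The only subtlety worth flagging is that the result is stated for \emph{every} $\lambda \in \mathbb{R}$, including negative values, but this is automatic since $\cosh$ is even and the derivation above never used the sign of $\lambda$. This flexibility in $\lambda$ is of course exactly what will later allow the proof of Theorem \ref{thm:genunifmart} to optimize over $\lambda$ after applying optional stopping to $X_t^\lambda$ (or a suitable mixture thereof).
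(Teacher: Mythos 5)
Your proof is correct and follows essentially the same route as the paper: a one-step conditional MGF computation reduced to the bound $\mathbb{E}[e^{\lambda\xi_t}\mid\mathcal{F}_{t-1}]\le e^{\lambda^2/2}$. The only cosmetic difference is that the paper cites Hoeffding's Lemma for this bound while you compute $\cosh(\lambda)$ explicitly and verify $\cosh(\lambda)\le e^{\lambda^2/2}$ by term-by-term Taylor comparison.
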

\begin{proof}
Using Hoeffding's Lemma, for any $\lambda \in \RR$ and $t \geq 1$, 
$\evp{\expp{\lambda \xi_t } \mid \cF_{t-1}}{} \leq \expp{\frac{\lambda^2}{8} (2^2)} = \expp{\frac{\lambda^2}{2} }$. 
Therefore, $\evp{\expp{\lambda \xi_t - \frac{\lambda^2}{2} } \mid \cF_{t-1}}{} \leq 1$, so $\evp{X_t^\lambda \mid \cF_{t-1}}{} \leq X_{t-1}^\lambda$.
\end{proof}

The result is derived through various manipulations of this supermartingale $X_t^\lambda$. 

For the rest of the proof, for all $t$, assume that $M_t \neq 0$.
This is with full generality, because when $M_t = 0$, the bound of Theorem \ref{thm:newunifmart} trivially holds.

\subsection{A Time-Uniform Law of Large Numbers}
\label{sec:btstrp}

The desired result, Theorem \ref{thm:genunifmart}, 
uniformly controls $ \abs{M_t} / \sqrt{U_t \log \log U_t}$, 
but we first control $ \abs{M_t} / U_t$. 
This generalizes the (strong) law of large numbers ([S]LLN), for any failure probability $\delta > 0$, 
uniformly over finite times. 

While a weaker result than Theorem \ref{thm:genunifmart}, 
this concisely demonstrates our principal proof techniques, 
and is independently necessary as a ``bootstrap" for the main bound.

The first step is to establish a moment bound which holds at any stopping time, 
by averaging supermartingales from the family $\left\{ \expp{ \lambda M_t - \frac{\lambda^2}{2} U_t} \right\}_{\lambda \in \RR}$ 
using a particular weighting over $\lambda$.

\begin{lem}
\label{lem:btstrpmgf}
For any stopping time $\tau$, 
$ \evp { \expp{ \lambda_0 \abs{M_\tau} - \frac{\lambda_0^2}{2} U_\tau} }{} \leq 2 $.
\end{lem}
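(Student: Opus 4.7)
The plan is to build the bound by symmetrizing the one-sided exponential supermartingale from Lemma \ref{lem:supermartconstr} and then applying optional stopping. The key observation is that for any real $a$, the identity $e^{\lambda_0 |a|} = \max\bigl(e^{\lambda_0 a}, e^{-\lambda_0 a}\bigr) \leq e^{\lambda_0 a} + e^{-\lambda_0 a}$ holds, so multiplying by $\exp\bigl(-\tfrac{\lambda_0^2}{2} U_t\bigr)$ pointwise in $\omega$ gives
\begin{equation*}
\expp{ \lambda_0 \abs{M_t} - \tfrac{\lambda_0^2}{2} U_t } \;\leq\; X_t^{\lambda_0} + X_t^{-\lambda_0}.
\end{equation*}

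Next, I would invoke Lemma \ref{lem:supermartconstr} twice, once with $\lambda = \lambda_0$ and once with $\lambda = -\lambda_0$, to conclude that both $X_t^{\lambda_0}$ and $X_t^{-\lambda_0}$ are nonnegative supermartingales with initial value $X_0^{\pm \lambda_0} = 1$. Then Theorem \ref{thm:optstoppingsupermart} (Optional Stopping for nonnegative supermartingales) applies to each of them separately at the (possibly infinite) stopping time $\tau$, giving $\evp{X_\tau^{\pm \lambda_0}}{} \leq 1$.

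Combining the pointwise bound with linearity of expectation then yields
\begin{equation*}
\evp{ \expp{ \lambda_0 \abs{M_\tau} - \tfrac{\lambda_0^2}{2} U_\tau } }{} \;\leq\; \evp{X_\tau^{\lambda_0}}{} + \evp{X_\tau^{-\lambda_0}}{} \;\leq\; 2,
\end{equation*}
which is the claim.

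There is essentially no hard step here; the only subtlety is ensuring that the optional stopping theorem is applied in its nonnegative-supermartingale form (so it remains valid even when $\pr{\tau = \infty} > 0$, using the almost-sure limit guaranteed by the martingale convergence theorem). Note also that the choice of $\lambda_0$ plays no role in this particular lemma beyond being a fixed real number; its specific value $\lambda_0 = \bigl(e^2(1+\sqrt{k})\bigr)^{-1}$ only enters later when this moment bound is combined with Markov's inequality to extract the linear-in-$U_t$ bootstrap control on $\abs{M_t}$.
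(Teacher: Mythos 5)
Your proof is correct and is essentially identical to the paper's: the paper averages $X_t^{\lambda_0}$ and $X_t^{-\lambda_0}$ with weight $\tfrac{1}{2}$ each and lower-bounds the average by $\tfrac{1}{2}\expp{\lambda_0\abs{M_t} - \tfrac{\lambda_0^2}{2}U_t}$, which is your pointwise inequality divided by two, followed by the same application of optional stopping to each one-sided supermartingale.
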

\begin{proof}
Recall the definition of $X_t^\lambda$ from Lemma \ref{lem:supermartconstr}. 
Here we set the free parameter $\lambda$ in the process $X_t^\lambda$ to get a process $Y_t$. 
$\lambda$ is set stochastically: $\lambda \in \{ - \lambda_0, \lambda_0 \}$ with probability $\frac{1}{2}$ each. 
After marginalizing over $\lambda$, the resulting process is
\begin{align}
\label{btstrpmgf}
Y_t &= \frac{1}{2} \expp{ \lambda_0 M_t - \frac{\lambda_0^2}{2} U_t} + \frac{1}{2} \expp{ - \lambda_0 M_t - \frac{\lambda_0^2}{2} U_t} \nonumber \\
&\geq \frac{1}{2} \expp{ \lambda_0 \abs{M_t} - \frac{\lambda_0^2}{2} U_t}
\end{align}
Now take $\tau$ to be any stopping time as in the lemma statement. 
Then $\evp{\expp{ \lambda_0 M_\tau - \frac{\lambda_0^2}{2} U_\tau} }{} = \evp{X_\tau^{\lambda = \lambda_0} }{} \leq 1$, 
where the inequality is by the Optional Stopping Theorem (Theorem \ref{thm:optstoppingsupermart}). 
Similarly, $\evp{X_\tau^{\lambda = -\lambda_0} }{} \leq 1$.

So $\evp{Y_\tau}{} = \frac{1}{2} \lrp{ \evp{ X_\tau^{\lambda = -\lambda_0}}{} + \evp{X_\tau^{\lambda = \lambda_0}}{} } \leq 1$. 
Combining this with \eqref{btstrpmgf} gives the result.
\end{proof}

A particular setting of $\tau$ extracts the desired uniform LLN bound from Lemma \ref{lem:btstrpmgf}.

\begin{thm}
\label{thm:varunifub}
Fix any $\delta > 0$. With probability $\geq 1-\delta$, 
for all $t \geq \min \left\{t : U_t \geq \frac{2}{\lambda_0^2} \log \lrp{\frac{2}{\delta}} \right\} $ simultaneously, 
$$ \frac{\abs{M_t}}{U_t} \leq \lambda_0 $$
\end{thm}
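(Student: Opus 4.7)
The plan is to exploit Lemma \ref{lem:btstrpmgf} through a single well-chosen stopping time that records the first violation of the bound. Define $t_0 := \min\{t : U_t \geq \tfrac{2}{\lambda_0^2} \log(2/\delta)\}$ and let
\[
\tau := \inf\{t \geq t_0 : \abs{M_t} > \lambda_0 U_t\},
\]
with $\tau = \infty$ by convention if this set is empty. Since $\{\tau \leq t\}$ depends only on $(M_s)_{s \leq t}$ and the deterministic process $U$, $\tau$ is a stopping time, and the event that Theorem \ref{thm:varunifub} fails is precisely $\{\tau < \infty\}$. So it suffices to show $\pr{\tau < \infty} \leq \delta$.

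The key step is to note that on $\{\tau < \infty\}$, by definition of $\tau$ we have $\abs{M_\tau} > \lambda_0 U_\tau$, and by definition of $t_0$ we have $U_\tau \geq U_{t_0} \geq \frac{2}{\lambda_0^2}\log(2/\delta)$. Substituting the first into the exponent gives
\[
\lambda_0 \abs{M_\tau} - \tfrac{\lambda_0^2}{2} U_\tau \;>\; \lambda_0 (\lambda_0 U_\tau) - \tfrac{\lambda_0^2}{2} U_\tau \;=\; \tfrac{\lambda_0^2}{2} U_\tau \;\geq\; \log(2/\delta),
\]
so on $\{\tau < \infty\}$, $\expp{\lambda_0 \abs{M_\tau} - \tfrac{\lambda_0^2}{2} U_\tau} > 2/\delta$. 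Lemma \ref{lem:btstrpmgf} applied to the stopping time $\tau$ (which may be infinite) bounds the expectation of this nonnegative random variable by $2$, so Markov's inequality yields
\[
\pr{\tau < \infty} \;\leq\; \pr{\expp{\lambda_0 \abs{M_\tau} - \tfrac{\lambda_0^2}{2} U_\tau} > 2/\delta} \;\leq\; \frac{2}{2/\delta} \;=\; \delta,
\]
which is the desired conclusion.

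The main subtlety, and essentially the only non-routine step, is that the stopping time $\tau$ is allowed to be infinite with positive probability; this is precisely why Lemma \ref{lem:btstrpmgf} was stated for general (possibly infinite) stopping times via the nonnegative-supermartingale version of optional stopping (Theorem \ref{thm:optstoppingsupermart}). Once the threshold $t_0$ and the first-violation stopping rule are aligned so that the exponent $\tfrac{\lambda_0^2}{2} U_\tau$ dominates $\log(2/\delta)$ on $\{\tau < \infty\}$, the rest is a direct Markov bound on the averaged supermartingale $Y_t$ from Lemma \ref{lem:btstrpmgf}. This template will be reused (with a finer averaging over $\lambda$) to prove the full iterated-logarithm bound of Theorem \ref{thm:genunifmart} in the subsequent subsection.
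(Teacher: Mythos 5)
Your proposal is correct and follows essentially the same route as the paper: the same first-violation stopping time starting from the threshold index, the same use of Lemma \ref{lem:btstrpmgf} to bound the stopped exponential moment by $2$, and the same observation that the exponent exceeds $\log(2/\delta)$ on $\{\tau<\infty\}$. The only cosmetic difference is that you phrase the final step as Markov's inequality, whereas the paper conditions on $\{\tau<\infty\}$ directly; these are the same estimate.
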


\begin{proof}
For convenience define $\tau_1 = \min \left\{t : U_t \geq \frac{2}{\lambda_0^2} \log \lrp{\frac{2}{\delta}} \right\}$.
Define the stopping time 
$ \tau = \min \left\{ t \geq \tau_1 : \frac{\abs{M_t}}{U_t} > \lambda_0 \right\} $.
Then it suffices to prove that $P(\tau < \infty) \leq \delta$.

On the event $\{ \tau < \infty \}$, we have $\frac{\abs{M_\tau}}{U_\tau} > \lambda_0$ by definition of $\tau$. 
Therefore, using Lemma \ref{lem:btstrpmgf},
\begin{align*}
2 &\geq \evp { \expp{ \lambda_0 \abs{M_\tau} - \frac{\lambda_0^2}{2} U_\tau} }{} 
\geq \evp { \expp{ \lambda_0 \abs{M_\tau} - \frac{\lambda_0^2}{2} U_\tau} \mid \tau < \infty}{} P(\tau < \infty) \\
&\stackrel{(a)}{>} \evp { \expp{ \lambda_0^2 U_\tau - \frac{\lambda_0^2}{2} U_\tau } }{} P(\tau < \infty) 
\stackrel{(b)}{\geq} \frac{2}{\delta} P(\tau < \infty)
\end{align*}
where $(a)$ uses that $\frac{\abs{M_\tau}}{U_\tau} > \lambda_0$ when $\tau < \infty$, 
and $(b)$ uses $U_\tau \geq U_{\tau_1} \geq \frac{2}{\lambda_0^2} \log \lrp{\frac{2}{\delta}}$. 
Therefore, $P(\tau < \infty) \leq \delta$, as desired. 
\end{proof}

The process $U_t$ is increasing in any case of interest, 
implying that $\abs{M_t} / U_t \leq \lambda_0$ uniformly in $t$ after some finite initial time. 
The setting of $\lambda_0$ happens to fit with the rest of our main proof, 
but this choice of $\lambda_0$ in Theorem \ref{thm:varunifub} is arbitrary. 
The same proof method in fact defines a family of bounds parametrized by $\lambda_0$; 
collectively, these express the SLLN for finite times.

%--------------------------------------------------------------------------------------------------------------------------------------------------------------------------------------------------------------------------------------

\subsection{Proof of Theorem \ref{thm:genunifmart}}
\label{sec:mainpf}

We proceed to prove Theorem \ref{thm:genunifmart}, using the SLLN bound of Theorem \ref{thm:varunifub} and its proof techniques.

\subsubsection{Preliminaries}
\label{sec:prelimadelta}

A little further notation is required for the rest of the proof. 

For any event $E \subseteq \Omega$ of nonzero measure, let $\evp{\cdot}{E}$ denote the expectation restricted to $E$, 
i.e. $\evp{f}{E} = \frac{1}{P(E)} \int_E f(\omega) P(d \omega)$ for a measurable function $f$ on $\Omega$. 
Similarly, dub the associated measure $P_E$, where for any event $\Xi \subseteq \Omega$ we have $P_E(\Xi) = \frac{P(E \cap \Xi)}{P(E)}$.

Consider the ``good" event of Theorem \ref{thm:varunifub}, in which its uniform deviation bound holds w.p. $\geq 1 - \delta$ for some $\delta$; 
call this event $A_\delta$. 
Formally, 
\begin{align}
\label{defofA}
A_\delta = 
\left\{ \omega \in \Omega : \frac{\abs{M_t}}{U_t} \leq \lambda_0 \;\;
\forall t \geq \min \left\{s : U_s \geq \frac{2}{\lambda_0^2} \log \lrp{\frac{2}{\delta}} \right\} \right\} 
\end{align}
Theorem \ref{thm:varunifub} states that $P(A_\delta) \geq 1-\delta$.

It will be necessary to shift sample spaces from $A_\delta$ to $\Omega$. 
The shift should be small in measure because $P(A_\delta) \geq 1 - \delta$; this is captured by the following simple observation.
\begin{lem}
\label{lem:shiftoutofA}
Define $A_\delta$ as in \eqref{defofA}. For any nonnegative random variable $X$ on $\Omega$,
\begin{align*}
\evp{X}{A_\delta} \leq \frac{1}{1-\delta} \evp{X}{}
\end{align*}
\end{lem}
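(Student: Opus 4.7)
The plan is to unwind the definition of the restricted expectation and apply two elementary bounds: nonnegativity of $X$ to control the numerator, and Theorem \ref{thm:varunifub} to control the denominator.

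First, by the definition given just before the lemma statement,
\[
\evp{X}{A_\delta} \;=\; \frac{1}{P(A_\delta)} \int_{A_\delta} X(\omega) \, P(d\omega).
\]
Since $X \geq 0$, the integrand is nonnegative on all of $\Omega$, so extending the domain of integration from $A_\delta$ to $\Omega$ can only increase the integral, yielding $\int_{A_\delta} X \, dP \leq \int_\Omega X \, dP = \evp{X}{}$.

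Second, Theorem \ref{thm:varunifum} (applied to $A_\delta$ as defined in \eqref{defofA}) gives $P(A_\delta) \geq 1 - \delta$, hence $\frac{1}{P(A_\delta)} \leq \frac{1}{1-\delta}$. Multiplying the two inequalities produces $\evp{X}{A_\delta} \leq \frac{1}{1-\delta} \evp{X}{}$, as claimed.

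There is no genuine obstacle here; the statement is just the standard observation that conditioning a nonnegative integrand on a high-probability event inflates the expectation by at most $1/P(A_\delta)$. The only subtlety is that the lemma is stated without any integrability hypothesis, but the inequality is vacuously true (with both sides equal to $+\infty$) if $\evp{X}{}$ is infinite, and otherwise the manipulations above are valid on the extended nonnegative reals.
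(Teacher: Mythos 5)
Your argument is correct and is essentially the paper's own proof: both rest on the nonnegativity of $X$ (the paper discards the contribution from $A_\delta^c$ in the decomposition $\evp{X}{} = \evp{X}{A_\delta}P(A_\delta) + \evp{X}{A_\delta^c}P(A_\delta^c)$, while you equivalently enlarge the domain of integration) together with the bound $P(A_\delta) \geq 1-\delta$ from Theorem \ref{thm:varunifub}. The only blemish is the typo \texttt{thm:varunifum} for \texttt{thm:varunifub} in your citation of that theorem.
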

\begin{proof}
Since $X \geq 0$, using Theorem \ref{thm:varunifub}, $\evp{X}{} = \evp{X}{A_\delta} P(A_\delta) + \evp{X}{A_\delta^c} P(A_\delta^c) \geq \evp{X}{A_\delta} (1-\delta)$.
\end{proof}

Define $X_t^\lambda$ as in Lemma \ref{lem:supermartconstr}. 
The idea of the proof is to choose $\lambda$ stochastically from a probability space $(\Omega_\lambda, \cF_\lambda, P_\lambda)$. 
The parameter $\lambda$ is chosen independently of $\xi_1, \xi_2, \dots$, 
so that $X_t^\lambda$ is defined on the product space.
Write $\mathbb{E}^{\lambda} \left[ \cdot \right]$ to denote the expectation with respect to $(\Omega_\lambda, \cF_\lambda, P_\lambda)$. 

To be consistent with previous notation, 
we continue to write $\evp{\cdot}{}$ to denote the expectation w.r.t. the original probability space $(\Omega, \cF, P)$ 
which encodes the stochasticity of $M_t$. 
As mentioned earlier, we use subscripts for expectations conditioned on events in this space, e.g. $\evp{X}{A_\delta}$. 
As an example, $\evp{\cdot}{\Omega} = \evp{\cdot}{}$.

\subsubsection{Proof of Theorem \ref{thm:genunifmart}}
\label{sec:subfinalpf}

The main result can now be proved. 
The first step is to choose $\lambda$ stochastically in the supermartingale $X_t^\lambda$ 
and bound the effect of averaging over $\lambda$
(analogous to Lemma \ref{lem:btstrpmgf} in the proof of the bootstrap bound). 
\begin{lem}
\label{lem:intgsupermartlem1}
Define $\tau_0$ as in Theorem \ref{thm:genunifmart}, 
and $A_\delta$ as in \eqref{defofA} for any $\delta$. 
Then for any stopping time $\tau \geq \tau_0$,
$$ \evp{\mathbb{E}^{\lambda} \left[ X_{\tau}^\lambda \right]}{A_{\delta}} 
\geq \evp{ \frac{ 2 \expp{ \frac{M_{\tau}^2}{2 U_{\tau}} (1-k) } } { \log^2 \lrp{\frac{U_{\tau}}{ \lrp{1 - \sqrt{k}} \abs{ M_{\tau}} } } } }{A_\delta} $$
\end{lem}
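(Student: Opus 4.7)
The plan is to choose a specific probability distribution $P_\lambda$ on $\lambda$ so that, pointwise for every $\omega \in A_\delta$, the integrated process $\mathbb{E}^\lambda[X_\tau^\lambda(\omega)]$ dominates the right-hand side, and then take $\evp{\cdot}{A_\delta}$ of both sides. Viewed as a function of $\lambda$, $X_\tau^\lambda = \expp{\lambda M_\tau - \tfrac{\lambda^2}{2} U_\tau}$ is maximized at $\lambda^\ast = M_\tau / U_\tau$ with peak value $\expp{M_\tau^2/(2 U_\tau)}$, and the bootstrap event $A_\delta$ guarantees $\abs{\lambda^\ast} \leq \lambda_0$ by Theorem \ref{thm:varunifub}. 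So $P_\lambda$ should be supported on $[-\lambda_0, \lambda_0]$, symmetric about $0$ (so that $X_\tau^\lambda + X_\tau^{-\lambda} \geq \expp{\abs{\lambda}\abs{M_\tau} - \tfrac{\lambda^2}{2} U_\tau}$ by $\cosh$-symmetrization), and spread geometrically so that \emph{some} atom of $P_\lambda$ is always close to the unknown maximizer $\abs{M_\tau}/U_\tau$.

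Concretely, I would take $P_\lambda$ supported on the symmetric grid $\{\pm \lambda_n\}_{n \geq 0}$ with $\lambda_n := \lambda_0 (1 - \sqrt{k})^n$ and weights proportional to $(n+1)^{-2}$ on each side. Given $\omega \in A_\delta$ with $M_\tau \neq 0$, let $n^\ast \geq 0$ be the unique integer with $\lambda_{n^\ast} \in ((1-\sqrt{k}) \lambda^\ast, \lambda^\ast]$ where $\lambda^\ast := \abs{M_\tau}/U_\tau \in (0, \lambda_0]$, and set $\rho := \lambda_{n^\ast}/\lambda^\ast \in (1 - \sqrt{k}, 1]$. Since $(1-\rho)^2 < k$ on this range, the identity
\[
\lambda_{n^\ast} \abs{M_\tau} - \tfrac{\lambda_{n^\ast}^2}{2} U_\tau = \tfrac{M_\tau^2}{2 U_\tau}\bigl(1 - (1-\rho)^2\bigr) \geq (1-k)\, \tfrac{M_\tau^2}{2 U_\tau}
\]
produces exactly the exponent appearing in the statement; this is why the grid ratio $1-\sqrt{k}$ is the right spacing. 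Retaining only the $n^\ast$-th term of the sum $\mathbb{E}^\lambda[X_\tau^\lambda]$ and using $(n^\ast+1) \log(1/(1-\sqrt{k})) \leq \log\bigl(\lambda_0 U_\tau / ((1-\sqrt{k})\abs{M_\tau})\bigr)$ converts the weight $\propto (n^\ast+1)^{-2}$ into a multiple of $\log^{-2}\bigl(U_\tau / ((1-\sqrt{k})\abs{M_\tau})\bigr)$, producing the denominator in the statement. Integrating against $\evp{\cdot}{A_\delta}$ then gives the lemma.

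The main obstacle will be calibrating $P_\lambda$ finely enough that the leading constant in the numerator lands exactly at $2$. A naive discrete weighting $p_n \propto (n+1)^{-2}$ loses a factor $\tfrac{6}{\pi^2}$ from the normalizer, and the conversion between $\log(1/(1-\sqrt{k}))$ arising from the grid step and the $\log$ inside the squared denominator must be tracked carefully. I would therefore most likely replace the discrete mixture by an absolutely continuous $P_\lambda$ (symmetric in sign) with density $g(\lambda) \propto 1/\bigl(\lambda \log^2(\lambda_0 / ((1-\sqrt{k})\lambda))\bigr)$ on $(0, \lambda_0]$, whose antiderivative is precisely the reciprocal of the target log factor, and then lower bound the $\lambda$-integral by restricting to an interval $[\lambda^\ast(1-\sqrt{k}), \lambda^\ast]$ on which the exponent is controlled as above. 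In either implementation, the condition $\tau \geq \tau_0$ ensures that $\lambda^\ast$ lies well inside the support of $P_\lambda$ so that the interval or single-term lower bound is not degenerate.
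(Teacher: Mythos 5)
Your proposal is correct and follows essentially the same route as the paper: a Robbins-style sign-symmetric mixture with density proportional to $1/\bigl(\lambda \log^2(1/\lambda)\bigr)$, completion of the square so the mixed integrand is a Gaussian centered at $\abs{M_\tau}/U_\tau$, localization of the $\lambda$-integral to a multiplicative $(1\pm\sqrt{k})$-window around that maximizer (which yields the $(1-k)$ factor in the exponent exactly as in your $\rho$-computation), use of $A_\delta$ and $\tau \geq \tau_0$ to keep the window inside the support, and evaluation of the residual mass via the explicit antiderivative $1/\log(1/\lambda)$ to produce the $\log^{-2}$ denominator. The only differences are cosmetic (the paper integrates over the two-sided window $[\lambda^\ast(1-\sqrt{k}), \lambda^\ast(1+\sqrt{k})]$ and supports $P_\lambda$ on $[-e^{-2},e^{-2}]$ rather than $[-\lambda_0,\lambda_0]$), and your flagged concern about calibrating the leading constant is exactly where the paper's own bookkeeping is most delicate.
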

The proof relies on estimating an integral and is deferred to Section \ref{sec:intgsupermartlem1}.

Lemma \ref{lem:intgsupermartlem1} can be converted into the desired uniform bound using a particular choice of stopping time, 
analogously to how the bootstrap bound Theorem \ref{thm:varunifub} is derived from Lemma \ref{lem:btstrpmgf}. 
However, this time a shift in sample spaces is also needed to yield Theorem \ref{thm:genunifmart}, 
since Lemma \ref{lem:intgsupermartlem1} uses $A_\delta$ instead of $\Omega$.

\begin{proof}[Proof of Theorem \ref{thm:genunifmart}]
Define the stopping time 
\begin{align}
\label{deftauub}
\displaystyle \tau = \min \Bigg\{ t \geq &\tau_0 \colon 
\abs{M_{t}} > \lambda_0 U_t \;\;\lor\;\; \nonumber \\
&\left( \abs{M_{t}} \leq \lambda_0 U_t \;\land\; 
\abs{M_{t}} > \sqrt{ \frac{2 U_t}{1-k} \log \left( \frac{ 2 \log^2 \lrp{\frac{U_{t}}{ \lrp{1 - \sqrt{k}} \abs{ M_{t}} } } }{\delta} \right)}  \right) \Bigg\}
\end{align}
It suffices to prove that $P (\tau = \infty) \geq 1 - \delta$.
On the event $\left\{ \{ \tau < \infty \} \cap A_{\delta/2} \right\}$, we have 
\begin{align*}
\abs{M_\tau} > \sqrt{ \frac{2 U_\tau}{1-k} \log \left( \frac{ 2 \log^2 \lrp{\frac{U_{\tau}}{ \lrp{1 - \sqrt{k}} \abs{ M_{\tau}} } } }{\delta} \right)} 
&\iff \frac{ 2 \expp{ \frac{M_{\tau}^2}{2 U_{\tau}} (1-k) } } { \log^2 \lrp{\frac{U_{\tau}}{ \lrp{1 - \sqrt{k}} \abs{ M_{\tau}} } } } > \frac{4}{\delta}
\end{align*} 
Therefore, using Lemma \ref{lem:intgsupermartlem1} 
and the nonnegativity of 
$\frac{ 2 \expp{ \frac{M_t^2}{2 U_t} (1-k) } } { \log^2 \lrp{\frac{U_t}{ \lrp{1 - \sqrt{k}} \abs{ M_t} } } }$ on $A_{\delta/2}$, we have 
\begin{align*}
2 &\geq \frac{1}{1-\frac{\delta}{2}} = \frac{\mathbb{E}^{\lambda} \left[ \evp{X_0^\lambda}{} \right] }{1-\frac{\delta}{2}} 
\stackrel{(a)}{\geq} \frac{ \mathbb{E}^{\lambda} \left[ \evp{X_{\tau}^\lambda}{} \right] }{1-\frac{\delta}{2}} 
\stackrel{(b)}{\geq} \mathbb{E}^{\lambda} \left[ \evp{X_\tau^\lambda}{A_{\delta/2}} \right] \\
&\stackrel{(c)}{=} \evp{\mathbb{E}^{\lambda} \left[ X_\tau^\lambda \right]}{A_{\delta/2}} 
\stackrel{(d)}{\geq} \evp{ \frac{ 2 \expp{ \frac{M_{\tau}^2}{2 U_{\tau}} (1-k) } } { \log^2 \lrp{\frac{U_{\tau}}{ \lrp{1 - \sqrt{k}} \abs{ M_{\tau}} } } } }{A_{\delta/2}} \\
&\geq \evp{ \frac{ 2 \expp{ \frac{M_{\tau}^2}{2 U_{\tau}} (1-k) } } { \log^2 \lrp{\frac{U_{\tau}}{ \lrp{1 - \sqrt{k}} \abs{ M_{\tau}} } } } \mid \tau < \infty}{A_{\delta/2}} 
P_{A_{\delta/2}}(\tau < \infty) 
\stackrel{(e)}{>} \frac{4}{\delta} P_{A_{\delta/2}}(\tau < \infty)
\end{align*}
where $(a)$ is by Optional Stopping 
(Theorem \ref{thm:optstoppingsupermart}; note that $\tau$ can be unbounded), 
$(b)$ is by Lemma \ref{lem:shiftoutofA}, 
$(c)$ is by Tonelli's Theorem, 
$(d)$ is by Lemma \ref{lem:intgsupermartlem1}, 
and $(e)$ is by the definitions of $A_{\delta/2}$ in \eqref{defofA} 
and of $\tau$ in \eqref{deftauub}.

After simplification, this gives 
\begin{align}
\label{ubstgoodeventprob}
P_{A_{\delta/2}} (\tau < \infty) \leq \delta/2 \implies P_{A_{\delta/2}} (\tau = \infty) \geq 1 - \frac{\delta}{2}
\end{align} 
Therefore,
\begin{align*}
P (\tau = \infty) &\geq P (\left\{ \tau = \infty \right\} \cap A_{\delta/2}) \stackrel{(a)}{=} P_{A_{\delta/2}} (\tau = \infty) P (A_{\delta/2}) \\
&\stackrel{(b)}{\geq} \lrp{1-\frac{\delta}{2}} \lrp{1-\frac{\delta}{2}} \geq 1-\delta
\end{align*}
where $(a)$ uses the definition of $P_{A_{\delta/2}} (\cdot)$ and $(b)$ uses \eqref{ubstgoodeventprob} and Theorem \ref{thm:varunifub}. 
The result follows.
\end{proof}

\subsection{Proof Discussion}
\label{sec:pfdiscuss}

%The proof of Lemma \ref{lem:intgsupermartlem1} (particularly Eq. \eqref{laplacelbeqn}) 
%can be viewed as a simplified version of Laplace's method for approximating exponential integrals (\cite{B06}, Sec. 4.4); 
%the proof takes a rectangular lower bound of the area under the Gaussian-like ``peak" of the exponential integrand. 

Most of the tools used in this proof, particularly optional stopping as in Theorem \ref{thm:optstoppingsupermart}, 
extend seamlessly to the continuous-time case. 
The main potential obstacle to this is in the first step -- 
establishing an exponential supermartingale construction of the form of Lemma \ref{lem:supermartconstr}. 
This is easily done in many situations of interest, 
as demonstrated by the archetypal result that the standard geometric Brownian motion 
$\expp{\lambda W_t - \frac{\lambda^2}{2} t}$ is precisely a martingale for any $\lambda \in \RR$, 
where $W_t$ is the standard Wiener process. 

A direct antecedent to this manuscript is a pioneering line of work by Robbins and colleagues \cite{DR67, RS70, R70} 
that investigates the powerful method of averaging martingales. 
%Along with its sequels \cite{L76}.
For the most part, it only considers the asymptotic regime, 
though Darling and Robbins \cite{DR67} do briefly treat finite times (with far weaker $\delta$ dependence).
More recently, de la Pe\~{n}a et al. (\cite{DLPKL07} and references therein) revisit their techniques with a different emphasis and normalization for $M_t$.

The idea of using stopping times in the context of uniform martingale concentration 
goes back at least to work of Robbins \cite{R70} with Siegmund \cite{RS70}, 
and was then notably used by Freedman \cite{F75}.
%The averaging and stopping time techniques have been combined in a very specific context \cite{AYPS12} 
%(subsumed by our results), 
%but not for martingale bounds in any general setting. 

Our proof techniques are conceptually related to ideas from Shafer and Vovk (\cite{SV01}, Ch. 5), 
who describe how to view the LIL as emerging from a game. 
Departing from traditional approaches, 
they motivate the exponential supermartingale construction directly using Taylor expansions
%\footnote{This can otherwise be motivated with the continuous-time case, 
%where it is an exact martingale due to CLT effects in the Donsker continuous-time limit.}
and prove the (asymptotic) LIL by averaging such supermartingales. 
% Another use of our mixture distribution for a related purpose is in \cite{SSVV11}.

Two final interesting but unexplored connections bear mentioning.
First, our proof technique incorporates relative variation ($\abs{M_t} / U_t$) at multiple scales at once, 
possible here because the index set (time) is totally ordered. 
There is an analogy to well-developed general chaining techniques \cite{T05} 
that have been used to great effect to uniformly bound processes indexed on metric spaces, 
by using covering arguments which are also sensitive to variation at different scales. 
Second, it has been previously noted \cite{R02, HK04} that martingale stopping times and uniform concentration are dual, 
and our technique provides another connection between them.

%-------------------------------------------------------------------------------------------------------------------------------------------------------------------------------------------------------------------------------------
%-------------------------------------------------------------------------------------------------------------------------------------------------------------------------------------------------------------------------------------

\section{Proof of Theorem \ref{thm:lbinterestingregime} (Anti-Concentration Bound)}
\label{sec:pfrrwlb}

In this section, let $M_t$ be the Rademacher random walk, $k := \frac{1}{3}$, 
and $C_1$ be as defined in Theorem \ref{thm:lbinterestingregime}. 
Our proof will use submartingales rather than supermartingales; 
therefore, we will employ a standard optional stopping theorem for submartingales (paralleling Theorem \ref{thm:optstoppingsupermart}):
\begin{thm}[Optional Stopping for Submartingales (\cite{D10})]
\label{thm:optstoppingsubmart}
Let $M_t$ be a submartingale. Then if $\tau$ is an a.s. bounded stopping time, $\evp{M_\tau}{} \geq \evp{M_0}{}$.
\end{thm}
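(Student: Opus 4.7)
The plan is to give the standard bounded-time optional stopping argument, exploiting the fact that $\tau$ is almost surely bounded so we never need any uniform integrability or convergence hypothesis. Let $N$ be a deterministic bound with $\tau \leq N$ a.s. First I would write the telescoping identity
\[
M_\tau - M_0 = \sum_{k=1}^{N} (M_k - M_{k-1})\, \mathbf{1}\{\tau \geq k\},
\]
which is valid pointwise because the sum truncates at $\tau$. The key observation is that $\{\tau \geq k\} = \{\tau \leq k-1\}^c \in \cF_{k-1}$, so each indicator is $\cF_{k-1}$-measurable. This is the standard predictability trick underlying essentially every optional stopping proof.

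Next I would take expectations term by term. Since $\tau \leq N$ a.s., the sum is finite, and each summand is integrable (as $M_k$ and $M_{k-1}$ are integrable and the indicators are bounded), so Fubini/linearity applies. Conditioning on $\cF_{k-1}$ and pulling the $\cF_{k-1}$-measurable indicator out gives
\[
\evp{(M_k - M_{k-1})\, \mathbf{1}\{\tau \geq k\}}{}
= \evp{\mathbf{1}\{\tau \geq k\}\, \evp{M_k - M_{k-1} \mid \cF_{k-1}}{}}{} \geq 0,
\]
where the final inequality uses the submartingale property $\evp{M_k \mid \cF_{k-1}}{} \geq M_{k-1}$. Summing over $k=1,\dots,N$ yields $\evp{M_\tau}{} - \evp{M_0}{} \geq 0$, which is the claim.

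An equivalent route is to verify directly that the stopped process $\{M_{t \wedge \tau}\}$ is itself a submartingale with respect to $\{\cF_t\}$ (again using that $\{\tau \geq k\}$ is $\cF_{k-1}$-measurable), and then evaluate at $t = N$, at which point $M_{N \wedge \tau} = M_\tau$ a.s. by the bound on $\tau$, giving $\evp{M_\tau}{} = \evp{M_{N \wedge \tau}}{} \geq \evp{M_{0 \wedge \tau}}{} = \evp{M_0}{}$.

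There is no genuine obstacle here: the only mild subtlety is ensuring that one never needs to exchange a limit and an expectation. This is precisely what the bounded-stopping-time assumption buys, since the telescoping sum has a deterministic finite number of terms and all integrability is automatic. The corresponding statements for unbounded $\tau$ (as used for nonnegative supermartingales in Theorem \ref{thm:optstoppingsupermart}) would require additional hypotheses such as nonnegativity plus the martingale convergence theorem, or uniform integrability of $\{M_{t \wedge \tau}\}$, but these are not needed for the version stated above.
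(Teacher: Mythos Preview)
Your proof is correct and is exactly the standard argument for bounded-time optional stopping. The paper does not prove this theorem at all; it simply states it and cites Durrett \cite{D10}, so there is nothing to compare against beyond noting that your argument is the textbook one.
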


We will also construct a family of exponential submartingales, 
analogous to the supermartingale construction of Lemma \ref{lem:supermartconstr} in the concentration bound proof.
\begin{lem}
\label{lem:submartconstr}
The process $Z_t^\lambda := \expp{ \lambda M_t - k \lambda^2 U_t}$ is a submartingale for $\lambda \in \left[- \frac{1}{e^2}, \frac{1}{e^2} \right]$.
\end{lem}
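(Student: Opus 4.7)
The plan is to reduce the submartingale property to a single deterministic inequality in $\lambda$ and then verify it by elementary Taylor bounds. Since $U_t - U_{t-1} = 1$ and $\xi_t$ is Rademacher and independent of $\cF_{t-1}$, one has $\evp{Z_t^\lambda \mid \cF_{t-1}}{} = Z_{t-1}^\lambda \cdot \cosh(\lambda)\expp{-\lambda^2/3}$, so $Z_t^\lambda$ is a submartingale iff
\begin{equation*}
\cosh(\lambda) \geq \expp{\lambda^2/3} \quad \text{for every } \lambda \in [-1/e^2,\; 1/e^2].
\end{equation*}

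I would then verify this scalar inequality by combining two elementary bounds. First, $\cosh(\lambda) \geq 1 + \lambda^2/2$, obtained by keeping only the first two terms of its (nonnegative-coefficient) Taylor series. Second, $\expp{\lambda^2/3} \leq 1 + \lambda^2/3 + \lambda^4/9$, which follows from the standard inequality $e^x \leq 1 + x + x^2$ on $[0,1]$ applied with $x = \lambda^2/3 \leq 1/(3e^4) \ll 1$. Subtracting, $\cosh(\lambda) - \expp{\lambda^2/3} \geq \lambda^2/6 - \lambda^4/9$, which is nonnegative whenever $\lambda^2 \leq 2/3$, a condition satisfied with enormous slack when $|\lambda| \leq 1/e^2 \approx 0.135$.

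The only mildly delicate step is the auxiliary inequality $e^x \leq 1 + x + x^2$ on $[0,1]$, which I would justify by setting $f(x) := 1 + x + x^2 - e^x$, noting $f(0) = f'(0) = 0$, and observing that $f''(x) = 2 - e^x$ is positive on $[0,\ln 2)$ and negative on $(\ln 2, 1]$, so $f'$ is unimodal on $[0,1]$ with $f'(1) = 3 - e > 0$; this forces $f' \geq 0$ on $[0,1]$ and hence $f \geq 0$. A slicker alternative is to use $e^x \leq 1/(1-x)$ for $x \in [0,1)$, which reduces the whole lemma to $(1+\lambda^2/2)(1-\lambda^2/3) \geq 1$, i.e.\ $\lambda^2 \leq 1$. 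Either way, the restriction $|\lambda| \leq 1/e^2$ is considerably tighter than the submartingale property itself demands (the inequality $\cosh(\lambda) \geq e^{\lambda^2/3}$ in fact holds out to $|\lambda| \approx 2$), so I anticipate this specific range is dictated by the subsequent $\lambda$-averaging step in the anti-concentration proof rather than by this lemma in isolation.
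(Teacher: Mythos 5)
Your proof is correct and follows essentially the same route as the paper: reduce the claim to the scalar inequality $\cosh(\lambda) \geq e^{\lambda^2/3}$ on $[-1/e^2, 1/e^2]$ via the one-step conditional expectation $\evp{Z_t^\lambda \mid \cF_{t-1}}{} = \cosh(\lambda)\, e^{-k\lambda^2} Z_{t-1}^\lambda$. The only difference is that the paper simply asserts this inequality while you supply an explicit (and valid) Taylor-bound verification of it; your closing observation that the range $|\lambda| \leq 1/e^2$ is dictated by the $\lambda$-averaging distribution rather than by this lemma is also accurate.
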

\begin{proof}
We rely on the inequality $\cosh(x) \geq e^{k x^2}$ over $x \in \left[- \frac{1}{e^2}, \frac{1}{e^2} \right]$, so that
$\evp{Z_t^\lambda \mid \cF_{t-1}}{}
= \evp{\expp{ \lambda \xi_t} \mid \cF_{t-1}}{} e^{- k \lambda^2} Z_{t-1}^\lambda
= \cosh(\lambda) e^{- k \lambda^2} Z_{t-1}^\lambda \geq Z_{t-1}^\lambda$.
\end{proof}

%-------------------------------------------------------------------------------------------------------------------------------------------------------------------------------------------------------------------------------------

\subsection{Preliminaries and Proof Overview}
\label{sec:lillbprelims}

%\akshay{We have two methods (the LLN and the "which envelope determines the stopping time" one) which both hold for minimum times of $\log 1/\delta$. 
%Make a note of this.}

Many aspects of this proof parallel that of the concentration bound of Theorem \ref{thm:genunifmart}.
Again, the idea is to choose $\lambda$ stochastically from a probability space $(\Omega_\lambda, \cF_\lambda, P_\lambda)$ 
such that $ P_\lambda (d \lambda) = \frac{d \lambda}{\abs{\lambda} \lrp{ \log \frac{1}{\abs{\lambda}} }^2} $; 
and the parameter $\lambda$ is chosen independently of the $\xi_1, \xi_2, \dots$, 
so that $Z_t^\lambda$ is defined on the product space.

As in the concentration bound proof, write $\mathbb{E}^{\lambda} \left[ \cdot \right]$ 
to denote the expectation with respect to $(\Omega_\lambda, \cF_\lambda, P_\lambda)$. 
For consistency with previous notation, 
we continue to write $\evp{\cdot}{}$ to denote the expectation w.r.t. the original probability space $(\Omega, \cF, P)$ 
which encodes the stochasticity of $M_t$. 

Just as for the concentration bound, our proof of this anti-concentration bound reasons about the value of a particular stopping time. 
However, the stopping time used here is slightly different, 
because the conditions for convergence of our submartingales are more restrictive than those for supermartingales. 

To be concrete, define $\sigma_{\delta} := \frac{e^4}{k} \log \lrp{ \frac{2}{\delta} }$. 
For a given finite time horizon $T > \sigma_{\delta}$, 
we use a stopping time defined as
\begin{align}
\label{stdeflb}
\displaystyle \tau (T) := \min \Bigg[ &\min \bigg\{ t \in [\sigma_{\delta}, T) \colon 
\abs{M_{t}} > \frac{2k U_{t}}{e^2 } \;\;\lor\;\; \nonumber \\
&\left[ \abs{M_{t}} \leq \frac{2k U_{t}}{e^2 } \;\land\; 
\abs{M_{t}} > \sqrt{ 2k U_{t} \log \left( \frac{ \log \lrp{ \frac{2k U_{t} }{\abs{M_{t}} + 2 \sqrt{k U_{t}} } } }{C_1 \delta} \right)}  \right] \Bigg\} , \; T \Bigg]
\end{align}

We also require a moment bound for the $\lambda$-mixed submartingales, whose proof is in Section \ref{sec:pfoflem:lbintestimate}. 
\begin{lem}
\label{lem:lbintestimate}
For any $t$, 
$\mathbb{E}^{\lambda} \left[ Z_t^\lambda \right] \leq G_{t} :=
\begin{cases}
\frac{ 15 \expp{ \frac{M_t^2}{4k U_t} }}{ \log \lrp{ \frac{2k U_t}{\abs{M_t} + 2 \sqrt{k U_t} } } }  &\;\;  \mbox{if } \abs{M_t} \leq \frac{2k}{e^2} U_t \\
\frac{ 7 \expp{ \frac{M_t^2}{4k U_t} }}{ \log \lrp{ \frac{2k U_t}{ 2 \sqrt{k U_t} } } } &\;\;  \mbox{if } \abs{M_t} > \frac{2k}{e^2} U_t
\end{cases}$
\end{lem}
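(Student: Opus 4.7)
Writing out the stated measure explicitly, the task is to upper bound
\[
\mathbb{E}^\lambda[Z_t^\lambda] = \int_{-1/e^2}^{1/e^2} \frac{e^{\lambda M_t - k\lambda^2 U_t}}{|\lambda|\bigl(\log(1/|\lambda|)\bigr)^2}\, d\lambda,
\]
where the integration range is the interval on which $Z_t^\lambda$ is a submartingale (Lemma \ref{lem:submartconstr}). The strategy is a Laplace-type estimate: complete the square in $\lambda$ to isolate a Gaussian-like factor, and then control the slowly-varying weight $\frac{1}{|\lambda|(\log(1/|\lambda|))^2}$ over the window where that factor carries its mass.

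Without loss of generality assume $M_t \geq 0$ by symmetry of the weight in $\lambda$. Swapping $\lambda \mapsto -\lambda$ shows the contribution of $\lambda<0$ is pointwise dominated by that of $\lambda>0$, so it suffices to bound twice the integral over $\lambda>0$. Completing the square gives $\lambda M_t - k\lambda^2 U_t = \frac{M_t^2}{4kU_t} - kU_t(\lambda - \lambda^*)^2$ with $\lambda^* := M_t/(2kU_t)\ge 0$, reducing the problem (after pulling out $e^{M_t^2/(4kU_t)}$) to bounding
\[
I_t \;:=\; \int_0^{1/e^2} \frac{e^{-kU_t(\lambda-\lambda^*)^2}}{\lambda\bigl(\log(1/\lambda)\bigr)^2}\, d\lambda.
\]

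The proof now splits according to whether the unconstrained peak $\lambda^*$ lies in $[0,1/e^2]$. In the regime $M_t \leq \tfrac{2k}{e^2}U_t$ (so $\lambda^* \leq 1/e^2$), substitute $\lambda = \lambda^* + s/\sqrt{2kU_t}$ to turn the Gaussian factor into $e^{-s^2}$; over the effective window $|s|\lesssim 1$, bound the weight by its value at $\lambda = \lambda^* + 1/\sqrt{kU_t}$, where $1/\lambda = 2kU_t/(M_t + 2\sqrt{kU_t})$ produces exactly the $\log$ appearing in the statement. The Gaussian tails contribute a bounded remainder that is absorbed into the constant $15$. In the opposite regime $M_t > \tfrac{2k}{e^2}U_t$ (so $\lambda^* > 1/e^2$), the integrand $e^{-kU_t(\lambda-\lambda^*)^2}$ is monotone on $[0,1/e^2]$ and peaks at the boundary $\lambda = 1/e^2$; the change of variables $u = \log(1/\lambda)$ converts the weight into $du/u^2$ on $[2,\infty)$, giving a baseline integral of $1/2$, and a more careful Laplace estimate on a window of logarithmic width around $u = 2$ produces the factor $1/\log\sqrt{kU_t} = 1/\log(2kU_t/(2\sqrt{kU_t}))$ and the constant $7$.

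The main obstacle is calibrating these Laplace estimates tightly enough to obtain the explicit constants $15$ and $7$. The weight is singular at $\lambda = 0$, so when $M_t$ is small (making $\lambda^*$ close to $0$) one cannot evaluate the weight at $\lambda^*$ itself; the role of the $+2\sqrt{kU_t}$ buffer in the denominator is to encode the effective width $1/\sqrt{kU_t}$ of the Gaussian, and getting the right buffer requires choosing the localization window carefully. An analogous subtlety at the boundary $\lambda = 1/e^2$ in Case 2 requires balancing Gaussian decay from $\lambda^*$ against the polynomial decay of $1/u^2$; this step is where the particular choice of mixing measure $d\lambda/(|\lambda|(\log 1/|\lambda|)^2)$ pays off, since it is precisely the measure that makes both factors contribute at the same order.
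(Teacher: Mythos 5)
Your overall route is the same as the paper's: complete the square, fold the negative-$\lambda$ half into the positive half, and split into the two cases according to whether the peak $\lambda^*=\abs{M_t}/(2kU_t)$ lies inside $[0,1/e^2]$. But there is a genuine gap in your Case 1 mechanism. The weight $w(\lambda)=\frac{1}{\lambda(\log(1/\lambda))^2}$ is monotone \emph{decreasing} on $(0,e^{-2}]$ (set $u=\log(1/\lambda)$; then $w=e^u/u^2$ is increasing for $u\geq 2$). So "bound the weight by its value at $\lambda=\lambda^*+1/\sqrt{kU_t}$" over a window lying to the left of that point gives a \emph{lower} bound on $w$, not an upper bound, and near $\lambda=0$ no pointwise bound exists at all. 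The quantity that actually "produces exactly the log" is not the pointwise weight but its exact integral: $\int_0^{b}w(\lambda)\,d\lambda = 1/\log(1/b)$, applied with $b=\mu+\sigma$, $\mu=\abs{M_t}/(2kU_t)$, $\sigma=1/\sqrt{kU_t}$, which yields $1/\log\lrp{\frac{2kU_t}{\abs{M_t}+2\sqrt{kU_t}}}$. The paper then handles $\lambda>\mu+\sigma$ with a mesh $\mu+\sigma\sqrt{i}$, $i=0,\dots,N$, on which the Gaussian factor is at most $e^{-i}$ and the weight is again integrated exactly on each cell; summing the resulting series (via $\sum_i e^{-i}\log(1+\sqrt{i})\leq 3/2$) is what produces the constant $15$. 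Your proposal would need to be rewritten along these lines before the constant, or even the single power of $\log$ in the denominator, can be claimed.

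Your Case 2 sketch is directionally right but does not identify where $1/\log\sqrt{kU_t}$ comes from. In the paper it is not a Laplace estimate near $u=2$: since the Gaussian factor is increasing on $[0,1/e^2]$ and the weight is decreasing, Chebyshev's integral (correlation) inequality decouples them into $e^2\cdot\lrp{\int_0^{1/e^2}e^{-kU_t(\lambda-\lambda^*)^2}d\lambda}\cdot\lrp{\int_0^{1/e^2}w\,d\lambda}$; the second factor is your $1/2$, the first is at most $\sqrt{\pi}/(2\sqrt{kU_t})$, and the crude inequality $\log\sqrt{kU_t}\leq\sqrt{kU_t}$ converts this into the stated form with constant $7$. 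You would need either this decoupling step or some substitute for it to make Case 2 rigorous.
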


Finally, we use a ``one-sided Lipschitz" characterization of $M_t$ -- that $G_t$ will not grow too fast when $t \approx \tau(T)$:
\begin{lem}
\label{lem:cannotincreasemuch}
For any $T > \frac{e^4}{k} \log 2$, $ G_{\tau(T)} \leq \frac{14}{11} G_{\tau(T) - 1} $.
\end{lem}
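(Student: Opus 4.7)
The plan is to factor the ratio of interest as
\[
\frac{G_{\tau(T)}}{G_{\tau(T)-1}} = \frac{C_\tau}{C_{\tau-1}} \cdot \exp\!\left[\frac{M_\tau^2}{4k(u+1)} - \frac{m^2}{4ku}\right] \cdot \frac{L_{\tau-1}}{L_\tau},
\]
where $\tau = \tau(T)$, $m = M_{\tau-1}$, $u = U_{\tau-1}$ (so that $U_\tau = u+1$ and $|M_\tau - m| = 1$ for the Rademacher step), the prefactor $C_t \in \{15, 7\}$ records which branch of the piecewise definition of $G_t$ applies, and $L_t$ is the corresponding logarithm in the denominator. The plan is to bound each factor separately and combine.

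First I would bound the exponential. A direct expansion of $M_\tau^2 u - m^2(u+1)$ using $M_\tau = m \pm 1$ gives
\[
\frac{M_\tau^2}{4k(u+1)} - \frac{m^2}{4ku} = \frac{\pm 2|m|u + u - m^2}{4ku(u+1)}.
\]
Under the constraint $|m| \leq (2k/e^2)u$, which holds whenever $\tau - 1 \geq \sigma_\delta$ (otherwise the first stopping condition would have triggered at $\tau - 1$), the worst sign gives an exponent at most $1/e^2 + O(1/u)$, and hence an exponential factor at most $\exp(1/e^2)(1+o(1))$. The hypothesis $T > (e^4/k)\log 2$ yields $u \geq (e^4/k)\log 2 - 1$, making the $o(1)$ correction small.

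Next I would split by which branch of $G$ applies at $\tau-1$ and at $\tau$. In the generic case (branch 1 at both times) the prefactor ratio is $1$ and the log ratio is $1 + o(1)$, because both log arguments stay bounded away from $1$ (at least around $e^2$ as $|M_t|$ approaches its threshold $(2k/e^2) U_t$) and change multiplicatively by only $O(1/u)$. Combining, $G_\tau/G_{\tau-1} \leq \exp(1/e^2)(1+o(1)) < 14/11$ under the stated hypothesis on $T$. The branch-transition $1 \to 2$ case is dominated by the prefactor ratio $7/15 < 1$. The remaining cases, in which the constraint $|m| \leq (2k/e^2) u$ may fail (only possible when $\tau = \sigma_\delta$, since for $\tau - 1 \geq \sigma_\delta$ the stopping condition would have triggered), require more delicate bookkeeping: when the branch changes from $2$ to $1$, the step $M_\tau = m-1$ is forced (the only way for $|M_\tau|$ to drop below the threshold that $|m|$ exceeds), yielding a compensating negative exponent of order $-1/e^2$ that balances against the $15/7$ prefactor increase.

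I expect the main obstacle to be landing the tight constant $14/11$ through the branch-transition cases. The generic ratio is already close to $\exp(1/e^2) \approx 1.145$, leaving only about $0.13$ of slack, and the transition cases require verifying that the competing factor products stay within that margin. The hypothesis $T > (e^4/k)\log 2$ will be used quantitatively to control the $o(1)$ corrections and to ensure the log denominators remain bounded away from zero throughout.
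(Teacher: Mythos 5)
Your decomposition is the same one the paper uses: the paper bounds the exponential ratio by $\expp{\frac{2M_{\tau-1}\xi_\tau+\xi_\tau^2}{4kU_{\tau-1}}} \leq \expp{\frac{1}{e^2}+\frac{1}{144}} \leq \frac{7}{6}$ (using exactly your constraint $\abs{M_{\tau-1}}\leq \frac{2k}{e^2}U_{\tau-1}$ and $\tau\geq 108$), bounds the logarithm ratio by $\frac{\log 3}{\log 3 - \log(13/12)}\leq\frac{12}{11}$, and multiplies to get $\frac{7}{6}\cdot\frac{12}{11}=\frac{14}{11}$ exactly. Two corrections to your plan. First, the branch bookkeeping is simpler than you anticipate: by the definition of $\tau(T)$ the first stopping condition did not fire at $\tau-1$, so $G_{\tau-1}$ is \emph{always} in the $15$-branch, and only the transitions $1\to 1$ and $1\to 2$ occur; in the $1\to 2$ case the paper uses $\frac{2kU_{\tau-1}}{\abs{M_{\tau-1}}+2\sqrt{kU_{\tau-1}}}\leq\sqrt{kU_{\tau-1}}$ so that the prefactor $\frac{7}{15}$ and the log ratio are each at most $1$. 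Your sketched handling of a $2\to 1$ transition is both unnecessary and would not work as stated: $\frac{15}{7}\,e^{-1/e^2}\approx 1.87 > \frac{14}{11}$, so the ``compensating negative exponent'' does not balance the prefactor, and the denominator would additionally change from $\log\sqrt{kU_{\tau-1}}$ to a potentially much smaller logarithm. Second, your ``$1+o(1)$'' accounting for the log ratio conceals where all the work is: at the minimal time $\tau\approx 108$ that correction is a genuine constant (the paper's $\frac{12}{11}$), and the target $\frac{14}{11}$ is exactly the product of the two explicit bounds, so the numerical estimates you defer to ``the main obstacle'' are the entire content of the proof rather than a routine check.
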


With these tools, the proof can be outlined.
\begin{proof}[Proof Sketch of Theorem \ref{thm:lbinterestingregime}]
Here fix $T$ and write $\tau = \tau(T)$ as defined in \eqref{stdeflb}.
It suffices to prove that $P (\tau < T) \geq \delta$ under the given assumption on $\delta$.
We have
\begin{align}
\label{eq:mastereqlb}
1 &\stackrel{(a)}{\leq} \mathbb{E}^{\lambda} \left[ \evp{Z_\tau^\lambda}{} \right] 
\stackrel{(b)}{=} \evp{\mathbb{E}^{\lambda} \left[ Z_\tau^\lambda \right]}{} \nonumber \\
&\stackrel{(c)}{\leq} \evp{ G_{\tau} }{} \leq \evp{ G_{\tau} \mid \tau < T }{} P_{} (\tau < T) + \evp{ G_{\tau} \mid \tau = T }{} \nonumber \\
&\stackrel{(d)}{\leq} \frac{14}{11} \lrp{ \evp{ G_{\tau-1} \mid \tau < T }{} P_{} (\tau < T) + \evp{ G_{\tau-1} \mid \tau = T }{} }
\end{align}
where $(a)$ is by Optional Stopping (Theorem \ref{thm:optstoppingsubmart}), 
$(b)$ is by Tonelli's Theorem,
$(c)$ is by Lemma \ref{lem:lbintestimate}, 
and $(d)$ is by Lemma \ref{lem:cannotincreasemuch}. 
The result is then proved by upper-bounding $\evp{ G_{\tau-1} \mid \tau < T }{}$ and $\evp{ G_{\tau-1} \mid \tau = T }{}$, 
using the upper bounds on $\abs{M_{\tau-1}}$ given by the definition of $\tau$.
\end{proof}

\subsection{Full Proof of Theorem \ref{thm:lbinterestingregime}}
\label{sec:lillbpf}

\begin{proof}[Proof of Theorem \ref{thm:lbinterestingregime}]
Throughout this proof, fix $T$ and write $\tau = \tau(T)$ as defined in \eqref{stdeflb}. 
Also let $C_1, C_2$ be the absolute constants defined in the theorem statement. 
It suffices to prove that $P (\tau < T) \geq \delta$ under the given assumption on $\delta$. 

We do this by working with \eqref{eq:mastereqlb} from the proof sketch; this states that 
\begin{align}
\label{eq:mastereqlbredux}
\frac{11}{14} \leq \evp{ G_{\tau-1} \mid \tau < T }{} P_{} (\tau < T) + \evp{ G_{\tau-1} \mid \tau = T }{}
\end{align}

By definition of $\tau$, we have
\begin{align}
\label{endstconditionlbs}
\abs{M_{\tau-1}} \leq \frac{2k }{e^2 } U_{\tau-1}
\qquad \mbox{and} \qquad
\abs{M_{\tau-1}} \leq \sqrt{ 2k U_{\tau-1} \log \left( \frac{ \log \lrp{ \frac{2k U_{\tau-1} }{\abs{M_{\tau-1}} + 2 \sqrt{k U_{\tau-1}} } } }{C_1 \delta} \right)}
\end{align}
Therefore, substituting the definition in \eqref{endstconditionlbs} into the definition of $G_{\tau-1}$,
\begin{align}
\label{controlGub}
G_{\tau-1} = \frac{ 15 \expp{ \frac{M_{\tau-1}^2}{4k U_{\tau-1}} }}{ \log \lrp{ \frac{2k U_{\tau-1}}{\abs{M_{\tau-1}} + 2 \sqrt{k U_{\tau-1}} } } }
\leq \frac{15}{ \sqrt{ C_1 \delta \log \lrp{ \frac{2k U_{\tau-1}}{\abs{M_{\tau-1}} + 2 \sqrt{k U_{\tau-1}} } } }} 
\end{align}

So from \eqref{controlGub},
\begin{align}
\label{contribminimalinenvelope}
\evp{ G_{\tau-1} \mid \tau = T }{} 
\leq \evp{ \frac{15}{ \sqrt{ C_1 \delta \log \lrp{ \frac{2k U_{T-1}}{\abs{M_{T-1}} + 2 \sqrt{k U_{T-1}} } } }} \mid \tau = T }{} 
\leq \frac{15}{\sqrt{C_1}} = \frac{11}{28}
\end{align}
where the last inequality is by the assumption $\delta \geq \frac{4}{\log \lrp{ k U_{T-1} }}$ and Lemma \ref{lem:nostlbdelta}.

Also, from \eqref{controlGub},
\begin{align*}
\evp{ G_{\tau-1} \mid \tau < T }{} 
&\leq \evp{  \frac{15}{ \sqrt{C_1 \delta \log \lrp{ \frac{2k U_{\tau-1}}{\abs{M_{\tau-1}} + 2 \sqrt{k U_{\tau-1}} } } }} \mid \tau < T }{} \\
&\stackrel{(a)}{\leq} \evp{ \frac{15}{ \sqrt{C_1 \delta \log \lrp{ \frac{2k U_{\tau-1}}{ 2k e^{-2} U_{\tau-1} + 2 \sqrt{k U_{\tau-1}} } } }}  \mid \tau < T }{}  \\
&= \evp{ \frac{15}{ \sqrt{ C_1 \delta \log \lrp{ \lrp{ \frac{1 }{e^2 } + \frac{1}{\sqrt{k U_{\tau-1}}} }^{-1} } }} \mid \tau < T }{}  
\stackrel{(b)}{\leq} \frac{15}{ \sqrt{C_1 \delta}} 
\leq \frac{15}{\delta \sqrt{C_1}} = \frac{11}{28 \delta}
\end{align*}
where $(a)$ uses $\abs{M_{\tau-1}} \leq \frac{2k }{e^2 } U_{\tau-1}$ (by \eqref{endstconditionlbs})
and $(b)$ uses $U_{\tau-1} = \tau - 1 \geq \frac{e^4}{k} \log 2 - 1$.

Substituting this and \eqref{contribminimalinenvelope} into \eqref{eq:mastereqlbredux} gives
$1 \leq \lrp{ \frac{1}{2 \delta} } P (\tau < T) + \frac{1}{2}$.
Therefore, $P (\tau < T) \geq \delta$, finishing the proof. 
\end{proof}

The proof of Theorem \ref{thm:lbinterestingregime} requires a supporting lemma, which is proved in Section \ref{sec:lbotherlems}.
\begin{lem}
\label{lem:nostlbdelta}
Within the event $\{\tau(T) = T\}$,
if $\delta \geq \frac{4}{ \log \lrp{ k U_{T-1} }}$,
then $$\frac{1}{\sqrt{\delta \log \lrp{ \frac{2k U_{T-1}}{\abs{M_{T-1}} + 2 \sqrt{k U_{T-1}} } } }} \leq 1$$
\end{lem}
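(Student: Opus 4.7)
The plan is to exploit the two non-stopping conditions at $t = T-1$ implied by $\{\tau(T) = T\}$, and combine them via a case split on the size of $\abs{M_{T-1}}$. Throughout, let $L = \log\!\lrp{ \frac{2k U_{T-1}}{\abs{M_{T-1}} + 2\sqrt{k U_{T-1}}} }$, so the claim is equivalent to $\delta L \geq 1$. Setting $a = \abs{M_{T-1}}/\sqrt{k U_{T-1}} \geq 0$, the definition of $L$ gives the identity
\[
L = \tfrac{1}{2}\log(k U_{T-1}) + \log 2 - \log(a+2),
\]
and combining this with the hypothesis $\delta \log(k U_{T-1}) \geq 4$ yields $\delta L \geq 2 + \delta\log 2 - \delta\log(a+2)$.

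In the case $a \leq 2(e^{1/\delta} - 1)$, one has $\log(a+2) \leq 1/\delta + \log 2$, and the above display gives $\delta L \geq 1$ immediately by cancellation. In the complementary case $a > 2(e^{1/\delta} - 1)$, I would invoke the second non-stopping condition $\abs{M_{T-1}}^2 \leq 2k U_{T-1} \log(L/(C_1 \delta))$, which rearranges to $L \geq C_1 \delta \exp(a^2/2)$. Substituting the case lower bound on $a$, this yields
\[
\delta L \geq C_1 \delta^2 \exp\!\bigl( 2(e^{1/\delta}-1)^2 \bigr),
\]
so the task reduces to verifying $C_1\delta^2 \exp(2(e^{1/\delta}-1)^2) \geq 1$ over the relevant range of $\delta$.

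The main potential obstacle is this last numerical check, equivalently $2(e^{1/\delta}-1)^2 \geq -\log C_1 - 2\log\delta$. I expect this to be routine rather than delicate: for $\delta \geq 1/\sqrt{C_1}$ the right side is non-positive and the inequality is trivial, while for smaller $\delta$ (down to the smallest interesting value $\delta = 1/C_1$) the doubly-exponential left side dwarfs the logarithmic right side, so a short monotonicity argument in $x = 1/\delta$ should finish the job. Everything else — the identity for $L$, the rearrangement of the non-stopping condition, and the algebra in Case 1 — is bookkeeping.
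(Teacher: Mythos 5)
Your proof is correct, but it takes a genuinely different route from the paper's. The paper substitutes the iterated-logarithm upper bound on $\abs{M_{T-1}}$ directly into the denominator of $L := \log \lrp{ \frac{2k U_{T-1}}{\abs{M_{T-1}} + 2 \sqrt{k U_{T-1}} } }$, then uses the hypothesis $\delta \geq 4/\log(kU_{T-1})$ together with crude polynomial estimates such as $\log\log(kU_{T-1}) \leq (kU_{T-1})^{1/8}$ and $(kU_{T-1})^{1/16}+1 \leq (kU_{T-1})^{1/4}$ to reach the quantitative intermediate conclusion $L \geq \frac{1}{4}\log(kU_{T-1})$, from which $\delta L \geq 1$ follows; this requires the initial-time condition $kU_{T-1} \geq 26$. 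You instead work from the exact identity $L = \frac12\log(kU_{T-1}) + \log 2 - \log(a+2)$ with $a = \abs{M_{T-1}}/\sqrt{kU_{T-1}}$ and split on the size of $a$: when $a+2 \leq 2e^{1/\delta}$ the $\delta\log 2$ and $1/\delta$ terms cancel exactly and only the hypothesis on $\delta$ is used, while otherwise the non-stopping condition in the rearranged form $L \geq C_1\delta\, e^{a^2/2}$ wins by a doubly exponential margin (your final check reduces to $2(e^x-1)^2 \geq 2\log x - \log C_1$ for $x=1/\delta>0$, which follows from $e^x-1\geq x$, $x^2\geq\log x$, and $C_1\geq 1$, so it indeed holds for all $\delta>0$, not just the range you describe). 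What your approach buys is a cleaner argument that dispenses with the lower bound on $U_{T-1}$ and the ad hoc polynomial comparisons; what the paper's buys is the explicit bound $L \geq \frac14\log(kU_{T-1})$. One cosmetic slip: $\delta = 1/C_1$ is the largest admissible value of $\delta$ in Theorem \ref{thm:lbinterestingregime}, not the smallest, but since your inequality holds for all positive $\delta$ this is immaterial.
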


\subsection{Supporting Proofs}
\label{sec:pfoflem:lbintestimate}

\begin{proof}[Proof of Lemma \ref{lem:lbintestimate}]
First note that
\begin{align}
\mathbb{E}^{\lambda} \left[ Z_t^\lambda \right] &= \int_{-1/e^2}^{0} \expp{ \lambda M_t - k \lambda^2 U_t } \frac{d \lambda}{- \lambda \lrp{ \log \frac{1}{- \lambda} }^2} + 
\int_{0}^{1/e^2} \expp{ \lambda M_t - k \lambda^2 U_t } \frac{d \lambda}{\lambda \lrp{ \log \frac{1}{\lambda} }^2} \nonumber \\
&= \expp{ \frac{M_t^2}{4k U_t} } \Bigg[ \int_{-1/e^2}^{0} e^{ - k U_t \lrp{\lambda - \frac{M_t}{2k U_t}}^2 } \frac{d \lambda}{- \lambda \lrp{ \log \frac{1}{- \lambda} }^2} + 
\int_{0}^{1/e^2} e^{ - k U_t \lrp{\lambda - \frac{M_t}{2k U_t}}^2 } \frac{d \lambda}{\lambda \lrp{ \log \frac{1}{\lambda} }^2} \Bigg] \nonumber \\
\label{baseintlb}
&\leq 2 \expp{ \frac{M_t^2}{4k U_t} } \int_{0}^{1/e^2} e^{ - k U_t \lrp{\lambda - \frac{\abs{M_t}}{2k U_t}}^2 } \frac{d \lambda}{\lambda \lrp{ \log \frac{1}{\lambda} }^2}
\end{align}
Suppose that $\frac{\abs{M_t}}{2k U_t} \leq \frac{1}{e^2}$. 
Define an integer $N$ such that 
\begin{align}
\label{defofmeshnum}
\frac{\abs{M_t}}{2k U_t} + \sqrt{\frac{N-1}{k U_t}} \leq \frac{1}{e^2} \leq \frac{\abs{M_t}}{2k U_t} + \sqrt{\frac{N}{k U_t}}
\end{align}
(Note that we are guaranteed $N \geq 1$ by the assumption $\frac{\abs{M_t}}{2k U_t} \leq \frac{1}{e^2}$.) 
Combining this with \eqref{baseintlb},
\begin{align}
\label{intexpansion}
\mathbb{E}^{\lambda} \left[ Z_t^\lambda \right] \leq 2 \expp{ \frac{M_t^2}{4k U_t} } 
\Bigg[ &\int_{0}^{\frac{\abs{M_t}}{2k U_t}} e^{ - k U_t \lrp{\lambda - \frac{\abs{M_t}}{2k U_t}}^2 } \frac{d \lambda}{\lambda \lrp{ \log \frac{1}{\lambda} }^2} \nonumber \\
&+ 
\sum_{i=0}^{N-1} \int_{\frac{\abs{M_t}}{2k U_t} + \sqrt{\frac{i}{k U_t}} }^{\frac{\abs{M_t}}{2k U_t} + \sqrt{\frac{i+1}{k U_t}} } 
e^{ - k U_t \lrp{\lambda - \frac{\abs{M_t}}{2k U_t}}^2 } \frac{d \lambda}{\lambda \lrp{ \log \frac{1}{\lambda} }^2} \Bigg] 
\end{align}
Now we have
\begin{align*}
\int_{0}^{\frac{\abs{M_t}}{2k U_t}} e^{ - k U_t \lrp{\lambda - \frac{\abs{M_t}}{2k U_t}}^2 } \frac{d \lambda}{\lambda \lrp{ \log \frac{1}{\lambda} }^2}
&\leq \int_{0}^{\frac{\abs{M_t}}{2k U_t}} \frac{d \lambda}{\lambda \lrp{ \log \frac{1}{\lambda} }^2}
= \left[ \frac{1}{ \log \frac{1}{\lambda} } \right]_{0}^{\frac{\abs{M_t}}{2k U_t}} 
= \frac{1}{ \log \frac{2k U_t}{\abs{M_t}} }
\end{align*}
Substituting this and Lemma \ref{lemlbpartaest} into \eqref{intexpansion}, we get 
\begin{align*}
\mathbb{E}^{\lambda} \left[ Z_t^\lambda \right]
&= 2 \expp{ \frac{M_t^2}{4k U_t} } \left[ \frac{1}{ \log \frac{2k U_t}{\abs{M_t}} } + 
\frac{1}{ \log \lrp{ \frac{2k U_t}{\abs{M_t} + 2 \sqrt{k U_t} } } } \lrp{ \frac{e}{e-1} + \frac{3}{2 \log \lrp{\frac{e^2}{ 1 + e^2/\sqrt{k} }} }  } \right]  \\
&\leq \frac{ \expp{ \frac{M_t^2}{4k U_t} }}{ \log \lrp{ \frac{2k U_t}{\abs{M_t} + 2 \sqrt{k U_t} } } } \lrp{ 6 + \frac{3}{ \log \lrp{\frac{e^2}{ 1 + e^2/\sqrt{k} }} }  } 
\leq \frac{ 15 \expp{ \frac{M_t^2}{4k U_t} }}{ \log \lrp{ \frac{2k U_t}{\abs{M_t} + 2 \sqrt{k U_t} } } } = G_t
\end{align*}
which proves the result when $\abs{M_t} \leq \frac{2k U_t}{e^2}$. 

Alternatively, if $\abs{M_t} > \frac{2k U_t}{e^2}$, from \eqref{baseintlb} we have
\begin{align*}
\mathbb{E}^{\lambda} \left[ Z_t^\lambda \right]
&\leq 2 \expp{ \frac{M_t^2}{4k U_t} } 
\int_{0}^{1/e^2} e^{ - k U_t \lrp{\lambda - \frac{\abs{M_t}}{2k U_t}}^2 } \frac{d \lambda}{\lambda \lrp{ \log \frac{1}{\lambda} }^2} \\
&\stackrel{(a)}{\leq} 2 e^2 \expp{ \frac{M_t^2}{4k U_t} } 
\lrp{ \int_{0}^{1/e^2} e^{ - k U_t \lrp{\lambda - \frac{\abs{M_t}}{2k U_t}}^2 } d \lambda }
\lrp{ \int_{0}^{1/e^2} \frac{d \lambda}{\lambda \lrp{ \log \frac{1}{\lambda} }^2} } \\
&= e^2 \expp{ \frac{M_t^2}{4k U_t} } 
\lrp{ \int_{0}^{1/e^2} e^{ - k U_t \lrp{\lambda - \frac{\abs{M_t}}{2k U_t}}^2 } d \lambda } \\
&\leq e^2 \expp{ \frac{M_t^2}{4k U_t} } 
\lrp{ \int_{-\infty}^{\frac{\abs{M_t}}{2k U_t}} e^{ - k U_t \lrp{\lambda - \frac{\abs{M_t}}{2k U_t}}^2 } d \lambda } \\
&= e^2 \expp{ \frac{M_t^2}{4k U_t} } \lrp{ \frac{\sqrt{\pi}}{2 \sqrt{k U_t}} }
\stackrel{(b)}{\leq} e^2 \expp{ \frac{M_t^2}{4k U_t} } \frac{ \sqrt{\pi}}{2 \log (\sqrt{k U_t})} 
\leq \frac{7 \expp{ \frac{M_t^2}{4k U_t} } }{ \log \lrp{\frac{2k U_t}{2 \sqrt{k U_t}}} } %\\
%&\leq \frac{e^2}{2} \alpha \sqrt{\pi} \frac{\expp{ \frac{M_t^2}{4k U_t} } }{\log \lrp{\frac{2k U_t}{\abs{M_t} + 2 \sqrt{k U_t}}} } \leq G_t
\end{align*}
where $(a)$ is by Chebyshev's integral inequality (Lemma \ref{lem:chebyintineq}), 
and $(b)$ is because $\frac{\log (\sqrt{k U_t})}{ \sqrt{k U_t}} \leq 1$.
\end{proof}

\begin{lem}
\label{lemlbpartaest}
Define $N$ as in \eqref{defofmeshnum}. 
Then %if $\frac{\abs{M_t}}{U_t} \leq \frac{1}{e^2 \lrp{1 + \sqrt{1/3}}}$,
$$\sum_{i=0}^{N-1} \int_{\frac{\abs{M_t}}{2k U_t} + \sqrt{\frac{i}{k U_t}} }^{\frac{\abs{M_t}}{2k U_t} + \sqrt{\frac{i+1}{k U_t}} } 
e^{ - k U_t \lrp{\lambda - \frac{\abs{M_t}}{2k U_t}}^2 } \frac{d \lambda}{\lambda \lrp{ \log \frac{1}{\lambda} }^2} 
\leq \frac{1}{ \log \lrp{ \frac{2k U_t}{\abs{M_t} + 2 \sqrt{k U_t} } } } \lrp{ \frac{e}{e-1} + \frac{3}{2 \log \lrp{\frac{e^2}{ 1 + e^2/\sqrt{k} }} }  }$$
\end{lem}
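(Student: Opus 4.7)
The plan is to bound the integrand piecewise on each subinterval $[\lambda_i^L, \lambda_{i+1}^L]$, where I abbreviate $\lambda_i^L := a + \sqrt{i/(kU_t)}$ and $a := \abs{M_t}/(2kU_t)$. The key observation is that on subinterval $i$ the Gaussian-like factor satisfies $e^{-kU_t(\lambda - a)^2} \leq e^{-i}$ (since $kU_t(\lambda-a)^2 \geq i$ whenever $\lambda \geq \lambda_i^L$). Pulling this factor out reduces the $i$-th integral to $\int_{\lambda_i^L}^{\lambda_{i+1}^L} \frac{d\lambda}{\lambda \log^2(1/\lambda)}$, which admits the explicit antiderivative $\Phi(\lambda) := 1/\log(1/\lambda)$. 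Setting $g_i := \Phi(\lambda_i^L)$, the $i$-th subinterval therefore contributes at most $e^{-i}(g_{i+1} - g_i)$.

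Next, I control the resulting series $\sum_{i=0}^{N-1} e^{-i}(g_{i+1} - g_i)$ by Abel summation. Reindexing consecutive terms yields
\[
\sum_{i=0}^{N-1} e^{-i}(g_{i+1}-g_i) \;=\; e^{-(N-1)} g_N - g_0 + (e-1)\sum_{i=1}^{N-1} e^{-i} g_i .
\]
Since the sequence $(g_i)$ is monotone increasing in $i$ and the geometric series satisfies $\sum_{i \geq 1} e^{-i} = 1/(e-1)$, the ``bulk'' of the weighted sum is controlled by a contribution of the form $g_1 \cdot \frac{e}{e-1}$. Because $g_1 = 1/\log\lrp{2kU_t/(\abs{M_t}+2\sqrt{kU_t})}$, this matches the leading prefactor of the claim with coefficient $\frac{e}{e-1}$.

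The main obstacle will be carefully handling the terminal contribution $e^{-(N-1)} g_N$ together with the portion of the Abel sum where $g_i$ approaches its maximum $g_N$. Here the definition of $N$ forces $\lambda_N^L \leq 1/e^2 + 1/\sqrt{kU_t}$, which translates to the lower bound $\log(1/\lambda_N^L) \geq \log\lrp{e^2/(1 + e^2/\sqrt{kU_t})}$; this is the source of the additive correction $\frac{3}{2 \log\lrp{e^2/(1 + e^2/\sqrt{k})}}$ appearing in the claim (the $\sqrt{k}$ in the denominator arising as a uniform substitute for $\sqrt{kU_t}$). Recovering the precise coefficient $3/2$ will require careful accounting of the contributing boundary subintervals; once that bookkeeping is done, summing with the $g_1 \cdot \frac{e}{e-1}$ bulk contribution delivers the stated bound.
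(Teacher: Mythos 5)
Your setup coincides with the paper's: on the $i$-th subinterval the Gaussian factor is at most $e^{-i}$, and the remaining integrand has antiderivative $1/\log(1/\lambda)$, so the $i$-th piece contributes at most $e^{-i}(g_{i+1}-g_i)$ with $g_i = 1/\log(1/\lambda_i^L)$. (The paper simply drops the $-g_i$ term rather than Abel-summing, but that difference is cosmetic.) The genuine gap is in how you then control $\sum e^{-i} g_i$. Your stated justification --- that because $(g_i)$ is monotone \emph{increasing} the bulk is controlled by $g_1\cdot\frac{e}{e-1}$ --- points the wrong way: monotonicity gives $g_i \geq g_1$, so replacing $g_i$ by $g_1$ is a lower bound, not an upper bound. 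The entire content of the lemma is quantifying the excess $g_{i+1}-g_1$, and this excess is not a boundary effect near $i=N$ as you suggest; every subinterval $i\geq 1$ contributes to the additive correction.

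The missing step is the following chain, which is what produces the coefficient $3/2$. Write
$$ g_{i+1}-g_1 \;=\; \frac{\log\lrp{\frac{\mu+\sigma\sqrt{i+1}}{\mu+\sigma}}}{\log\lrp{\frac{1}{\mu+\sigma}}\,\log\lrp{\frac{1}{\mu+\sigma\sqrt{i+1}}}}, \qquad \mu=\frac{\abs{M_t}}{2kU_t},\ \sigma=\frac{1}{\sqrt{kU_t}} . $$
The numerator is at most $\log(1+\sqrt{i})$ since $\sigma/(\mu+\sigma)\leq 1$; the second log in the denominator is bounded below by $\log\lrp{e^2/(1+e^2\sigma)} \geq \log\lrp{e^2/(1+e^2/\sqrt{k})}$ using the defining property of $N$ (which forces $\mu+\sigma\sqrt{N}\leq \frac{1}{e^2}+\sigma$) together with $U_t\geq 1$ --- you correctly identified this part. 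But then one must sum the excesses against the weights and establish $\sum_{i=0}^{\infty} e^{-i}\log(1+\sqrt{i}) \leq \frac{3}{2}$ (the paper isolates this as a separate lemma, proved by splitting $e^{-i}=e^{-i/2}\cdot e^{-i/2}$ and maximizing $e^{-x/2}\log(1+\sqrt{x})$). Without the decomposition $g_{i+1}=g_1+(g_{i+1}-g_1)$, the bound on the numerator by $\log(1+\sqrt{i})$, and this summation lemma, the coefficient $\frac{3}{2}$ cannot be recovered, and your "careful bookkeeping of boundary subintervals" does not describe the actual structure of the error term. The Abel-summation route would work if completed with exactly these ingredients, but as written the proposal defers the only nontrivial part of the proof.
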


\begin{proof}
For convenience, define $\mu := \frac{\abs{M_t}}{2k U_t}$ and $\sigma := \frac{1}{\sqrt{k U_t}}$. 
In particular, this means that $\mu + \sigma \sqrt{N-1} \leq \frac{1}{e^2} \leq \mu + \sigma \sqrt{N}$.

\begin{align}
\sum_{i=0}^{N-1} &\int_{\frac{\abs{M_t}}{2k U_t} + \sqrt{\frac{i}{k U_t}} }^{\frac{\abs{M_t}}{2k U_t} + \sqrt{\frac{i+1}{k U_t}} } 
e^{ - k U_t \lrp{\lambda - \frac{\abs{M_t}}{2k U_t}}^2 } \frac{d \lambda}{\lambda \lrp{ \log \frac{1}{\lambda} }^2} 
= \sum_{i=0}^{N-1} \int_{\mu + \sigma \sqrt{i}}^{\mu + \sigma \sqrt{i+1}} 
e^{ - \frac{\lrp{\lambda - \mu}^2 }{\sigma^2} } \frac{d \lambda}{\lambda \lrp{ \log \frac{1}{\lambda} }^2} \nonumber \\
&\leq \sum_{i=0}^{N-1} e^{ - \frac{\lrp{\sigma \sqrt{i}}^2 }{\sigma^2} } 
\int_{\mu + \sigma \sqrt{i}}^{\mu + \sigma \sqrt{i+1}} \frac{d \lambda}{\lambda \lrp{ \log \frac{1}{\lambda} }^2} 
= \sum_{i=0}^{N-1} e^{ - i } 
\lrp{ \frac{1}{\log \lrp{\frac{1}{ \mu + \sigma \sqrt{i+1} } }} - \frac{1}{\log \lrp{\frac{1}{ \mu + \sigma \sqrt{i} } }} } \nonumber \\
&\leq \sum_{i=0}^{N-1} \frac{e^{ - i }}{\log \lrp{\frac{1}{ \mu + \sigma \sqrt{i+1} }} } 
= \sum_{i=0}^{N-1} e^{ - i } \left( \frac{1}{ \log \lrp{\frac{1}{ \mu + \sigma } } } 
+ \frac{\log \lrp{\frac{\mu + \sigma \sqrt{i+1} }{ \mu + \sigma }} }{ \log \lrp{\frac{1}{ \mu + \sigma } }  \log \lrp{\frac{1}{ \mu + \sigma \sqrt{i+1} }} } \right) \nonumber \\
&\leq \frac{1}{ \log \lrp{\frac{1}{ \mu + \sigma } } } \sum_{i=0}^{N-1} e^{ - i } 
\left( 1 + \frac{\log \lrp{1 + \frac{\sigma (\sqrt{i+1} - 1) }{ \mu + \sigma }} }{ \log \lrp{\frac{1}{ \mu + \sigma \sqrt{N} }} } \right) \nonumber \\
&\leq \frac{1}{ \log \lrp{\frac{1}{ \mu + \sigma } } } \sum_{i=0}^{N-1} e^{ - i } 
\left( 1 + \frac{\log \lrp{1 + \frac{\sigma \sqrt{i} }{ \mu + \sigma }} }{ \log \lrp{\frac{1}{ \frac{1}{e^2} + \sigma }} } \right) \nonumber \\
&\leq \frac{1}{ \log \lrp{\frac{1}{ \mu + \sigma } } } \lrp{ \sum_{i=0}^{\infty} e^{ - i } + \sum_{i=0}^{\infty} e^{ - i } 
\left( \frac{\log \lrp{1 + \sqrt{i} } }{ \log \lrp{\frac{e^2}{ 1 + e^2 \sigma }} } \right) } \nonumber \\
\label{denomsigmabound}
&\leq \frac{1}{ \log \lrp{\frac{1}{ \mu + \sigma } } } 
\lrp{ \frac{e}{e-1} + \frac{1}{\log \lrp{\frac{e^2}{ 1 + e^2/\sqrt{k} }} } \sum_{i=0}^{\infty} e^{ - i } \log \lrp{1 + \sqrt{i} } } \nonumber \\
&\leq \frac{1}{ \log \lrp{\frac{1}{ \mu + \sigma } } } \lrp{ \frac{e}{e-1} + \frac{3}{2 \log \lrp{\frac{e^2}{ 1 + e^2/\sqrt{k} }} }  }
\end{align}
where \eqref{denomsigmabound} follows from Lemma \ref{lem:infsumub}.
\end{proof}

\begin{lem}
\label{lem:infsumub}
$\sum_{i=0}^{\infty} e^{ - i } \log \lrp{1 + \sqrt{i} } \leq \frac{3}{2}$
\end{lem}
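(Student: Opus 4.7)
The proof is a short direct estimate, so the plan is to bound each factor crudely and sum a geometric-type series. My approach would be to first reduce $\log(1+\sqrt{i})$ to something polynomial in $i$, then recognize the resulting sum in closed form, and finally verify the numerical inequality.

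Concretely, I would start from the elementary inequality $\log(1+x) \leq x$ valid for all $x \geq 0$, which gives $\log(1+\sqrt{i}) \leq \sqrt{i}$ for each $i \geq 0$. Noting that the $i=0$ summand is zero and that $\sqrt{i} \leq i$ whenever $i \geq 1$, this yields
\begin{align*}
\sum_{i=0}^{\infty} e^{-i} \log(1+\sqrt{i})
\leq \sum_{i=1}^{\infty} i\, e^{-i}.
\end{align*}

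The second step is to evaluate the right-hand side in closed form using the standard identity $\sum_{i=1}^{\infty} i x^i = x/(1-x)^2$ with $x = 1/e$, which gives $e/(e-1)^2$. A numerical comparison then shows
\begin{align*}
\frac{e}{(e-1)^2} \;\approx\; 0.921 \;<\; \frac{3}{2},
\end{align*}
so the desired bound follows.

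I do not expect any serious obstacle: each step is either a textbook inequality, a standard power series identity, or a routine numerical check. The only tiny care required is noting that $\sqrt{i} \leq i$ fails at $i=0$, but this is harmless because the integrand $\log(1+\sqrt{0}) = 0$ there, so the $i=0$ term can simply be dropped before applying the inequality.
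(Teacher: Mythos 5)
Your proof is correct, and it takes a genuinely different and more elementary route than the paper's. The paper bounds the summand by splitting $e^{-i}=e^{-i/2}\cdot e^{-i/2}$, showing via a derivative computation and a numerically located maximum ($x^*\approx 0.745$) that $e^{-x/2}\log(1+\sqrt{x})\leq \tfrac{1}{2}$ for all $x\geq 0$, and then summing the geometric series $\tfrac{1}{2}\sum_{i\geq 0}e^{-i/2}=\tfrac{\sqrt{e}}{2(\sqrt{e}-1)}\approx 1.27\leq \tfrac{3}{2}$. You instead use $\log(1+\sqrt{i})\leq\sqrt{i}\leq i$ for $i\geq 1$ (correctly discarding the vanishing $i=0$ term) and the closed form $\sum_{i\geq 1}ie^{-i}=e/(e-1)^2\approx 0.921$. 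Your argument avoids the calculus and the numerical root-finding entirely, and in fact produces a sharper constant than the paper's; the final comparison $e/(e-1)^2<\tfrac{3}{2}$ can even be made rigorous without decimals by checking $3e^2-8e+3>0$, which holds since $e$ exceeds the larger root $(8+\sqrt{28})/6<2.22$. Both approaches are valid; yours is the cleaner one for this particular constant, while the paper's pointwise bound on $e^{-x/2}\log(1+\sqrt{x})$ would also survive if the summand were replaced by an integral or a slightly different weight.
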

\begin{proof}
Take $f(x) = e^{ - x/2 } \log \lrp{1 + \sqrt{x} }$ for $x \geq 0$. 
Note $f'(x) = \frac{1}{2} e^{ - x/2 } \lrp{ \frac{1}{ \sqrt{x} (1 + \sqrt{x}) } - \log \lrp{1 + \sqrt{x} }}$. 
Since $\frac{1}{ \sqrt{x} (1 + \sqrt{x}) }$ is monotone decreasing and $\log \lrp{1 + \sqrt{x} }$ is monotone increasing, 
$f'(x)$ has exactly one root, corresponding to the maximum of $f(x)$. 
This can be numerically confirmed to occur at $x^* \approx 0.745$, 
and $f(x^*) \leq 0.5$. 

So $\sum_{i=0}^{\infty} e^{ - i } \log \lrp{1 + \sqrt{i} } \leq \sum_{i=0}^{\infty} e^{ - i/2 } \lrp{ e^{ - i/2 } \log \lrp{1 + \sqrt{i} } } 
\leq \frac{1}{2} \sum_{i=0}^{\infty} e^{ - i/2 } = \frac{\sqrt{e}}{2 (\sqrt{e} - 1)} \leq \frac{3}{2}$.
\end{proof}

\subsection{Ancillary Results and Proofs}
\label{sec:lbotherlems}

\begin{proof}[Proof of Lemma \ref{lem:cannotincreasemuch}]
Note that by definition of $\tau$,
\begin{align}
\label{endstconditionagain}
\abs{M_{\tau-1}} \leq \frac{2k }{e^2 } U_{\tau-1}
\qquad \mbox{and} \qquad
\abs{M_{\tau-1}} \leq \sqrt{ 2k U_{\tau-1} \log \left( \frac{ \log \lrp{ \frac{2k U_{\tau-1} }{\abs{M_{\tau-1}} + 2 \sqrt{k U_{\tau-1}} } } }{C_1 \delta} \right)}
\end{align}

Firstly, we have
\begin{align}
\label{ubconsecexp}
\frac{\expp{ \frac{M_{\tau}^2}{4k U_{\tau}} } }{\expp{ \frac{M_{\tau - 1}^2}{4k U_{\tau - 1}} }} 
\leq \frac{\expp{ \frac{M_{\tau}^2}{4k U_{\tau - 1}} } }{\expp{ \frac{M_{\tau - 1}^2}{4k U_{\tau - 1}} }} 
= \expp{ \frac{2 M_{\tau - 1} \xi_{\tau} + \xi_{\tau}^2 }{4k U_{\tau - 1}} } 
\stackrel{(a)}{\leq} \expp{ \frac{1}{e^2} + \frac{1}{144} } \leq \frac{7}{6}
\end{align}
where $(a)$ is because $M_{\tau - 1} \xi_{\tau} \leq \abs{M_{\tau - 1}} \leq \frac{2k }{e^2 } U_{\tau-1}$ by \eqref{endstconditionagain}, 
and because $\frac{\xi_{\tau}^2}{4k U_{\tau - 1}} = \frac{1}{4k U_{\tau - 1}} \leq \frac{1}{4k (108)} = \frac{1}{144}$.

We write $\tau(T)$ more concisely as $\tau$ here, 
and note that the minimum value of $T$ implies that $\tau(T) \geq 108$, 
a fact we will use throughout the proof.
Also, we have
\begin{align}
\label{logratub}
\frac{ \log \lrp{ \frac{2k U_{\tau - 1}}{\abs{M_{\tau - 1}} + 2 \sqrt{k U_{\tau - 1}} } } }{ \log \lrp{ \frac{2k U_{\tau}}{\abs{M_{\tau}} + 2 \sqrt{k U_{\tau}} } } }
&\stackrel{(a)}{\leq} \frac{ \log \lrp{ \frac{2k U_{\tau}}{\abs{M_{\tau - 1}} + 2 \sqrt{k U_{\tau}} } } }{ \log \lrp{ \frac{2k U_{\tau}}{\abs{M_{\tau}} + 2 \sqrt{k U_{\tau}} } } } 
\leq \frac{ \log \lrp{ \frac{2k U_{\tau}}{\abs{M_{\tau - 1}} + 2 \sqrt{k U_{\tau}} } } }{ \log \lrp{ \frac{2k U_{\tau}}{ 1 + \abs{M_{\tau - 1}} + 2 \sqrt{k U_{\tau}} } } } \nonumber \\
&\leq \frac{ \log \lrp{ \frac{2k U_{\tau}}{\abs{M_{\tau - 1}} + 2 \sqrt{k U_{\tau}} } } }{ \log \lrp{ \frac{2k U_{\tau}}{\abs{M_{\tau - 1}} + 2 \sqrt{k U_{\tau}} }} - \log (13/12) }
\end{align}
where $(a)$ is because $f(x) = \log \lrp{\frac{x}{C + \sqrt{2x}}}$ is monotone increasing for any $C \geq 0$; 
and $(b)$ is because $\abs{M_{\tau - 1}} + 2 \sqrt{k U_{\tau}} \geq 2 \sqrt{k U_{\tau}} \geq 12$, 
so $1 + \abs{M_{\tau - 1}} + 2 \sqrt{k U_{\tau}} \leq \frac{13}{12} (\abs{M_{\tau - 1}} + 2 \sqrt{k U_{\tau}})$.

Now we have 
\begin{align*}
\frac{2k U_{\tau}}{\abs{M_{\tau - 1}} + 2 \sqrt{k U_{\tau}} } 
&\stackrel{(b)}{\geq} \frac{2k U_{\tau}}{  \frac{2k }{e^2 } U_{\tau-1} + 2 \sqrt{k U_{\tau}} } 
\geq \frac{k U_{\tau}}{ \max \left( \frac{2k }{e^2 } U_{\tau-1}, 2 \sqrt{k U_{\tau}} \right) } \\
&\geq \min \left( \frac{e^2 }{2k } , \frac{1}{2} \sqrt{k U_{\tau}} \right) \stackrel{(c)}{\geq} 3
\end{align*}
where $(b)$ uses \eqref{endstconditionagain} and $(c)$ uses $U_{\tau} = \tau \geq 108$.
This means that from \eqref{logratub},
\begin{align}
\label{ratiologub}
\frac{ \log \lrp{ \frac{2k U_{\tau - 1}}{\abs{M_{\tau - 1}} + 2 \sqrt{k U_{\tau - 1}} } } }{ \log \lrp{ \frac{2k U_{\tau}}{\abs{M_{\tau}} + 2 \sqrt{k U_{\tau}} } } }
\leq \frac{ \log (3) }{ \log (3) - \log (13/12) } \leq \frac{12}{11}
\end{align}

Note that by \eqref{endstconditionagain}, $\abs{M_{\tau - 1}} \leq \frac{2k}{e^2} U_{\tau - 1}$, 
so $G_{\tau - 1} = \frac{15 \expp{ \frac{M_{\tau - 1}^2}{4k U_{\tau - 1}} }}{ \log \lrp{ \frac{2k U_{\tau - 1}}{\abs{M_{\tau - 1}} + 2 \sqrt{k U_{\tau - 1}} } } } $.

Suppose $\abs{M_{\tau}} \leq \frac{2k}{e^2} U_{\tau}$. 
Then using Lemma \ref{lem:lbintestimate}, \eqref{ubconsecexp}, and \eqref{ratiologub},
\begin{align*}
\frac{G_{\tau}}{G_{\tau - 1}} &= \frac{\expp{ \frac{M_{\tau}^2}{4k U_{\tau}} } }{\expp{ \frac{M_{\tau - 1}^2}{4k U_{\tau - 1}} }} 
\frac{ \log \lrp{ \frac{2k U_{\tau - 1}}{\abs{M_{\tau - 1}} + 2 \sqrt{k U_{\tau - 1}} } } }{ \log \lrp{ \frac{2k U_{\tau}}{\abs{M_{\tau}} + 2 \sqrt{k U_{\tau}} } } }
\leq \lrp{\frac{7}{6}} \frac{12}{11} = \frac{14}{11}
\end{align*}
Alternatively, if $\abs{M_{\tau}} > \frac{2k}{e^2} U_{\tau}$, we can use Lemma \ref{lem:lbintestimate} and \eqref{ubconsecexp} to conclude that
\begin{align*}
\frac{G_{\tau}}{G_{\tau - 1}} &= \frac{7 \expp{ \frac{M_{\tau}^2}{4k U_{\tau}} } }{15 \expp{ \frac{M_{\tau - 1}^2}{4k U_{\tau - 1}} }} 
\frac{ \log \lrp{ \frac{2k U_{\tau - 1}}{\abs{M_{\tau - 1}} + 2 \sqrt{k U_{\tau - 1}} } } }{ \log \lrp{ \sqrt{k U_{\tau}} } } \\
&\leq \frac{\expp{ \frac{M_{\tau}^2}{4k U_{\tau}} } }{\expp{ \frac{M_{\tau - 1}^2}{4k U_{\tau - 1}} }} 
\frac{ \log \lrp{ \sqrt{k U_{\tau - 1}} } }{ \log \lrp{ \sqrt{k U_{\tau}} } }
\leq \frac{7}{6} (1) \leq \frac{14}{11}
\end{align*}
\end{proof}

%\begin{thm}
%Fix some time $t$. 
%Then w.p. $\geq \delta$, 
%$ \abs{M_t} > \sqrt{ 2 k^2 U_{t} \log \left( \frac{ 2 }{\delta} \right)} $.
%\end{thm}
%\begin{proof}[Proof of Theorem \ref{thm:lbboringregime}]
%From Lemma \ref{lem:submartconstr},
%\begin{align*}
%1 = \evp{Z_0^\lambda}{} \geq \evp{Z_t^\lambda}{} \geq 
%\end{align*}
%\end{proof}

\begin{proof}[Proof of Lemma \ref{lem:nostlbdelta}]
The definition of $\tau (= T)$ and the fact that $\abs{M_{\tau-1}} \geq 0$ imply that
\begin{align*}
\abs{M_{T-1}} &= \abs{M_{\tau-1}} \leq \sqrt{ 2k U_{\tau-1} \log \left( \frac{ \log \lrp{ \frac{2k U_{\tau-1} }{\abs{M_{\tau-1}} + 2 \sqrt{k U_{\tau-1}} } } }{C_1 \delta} \right)} \\
&\leq \sqrt{ 2k U_{T-1} \log \left( \frac{ \log \lrp{ k U_{T-1} } }{2 C_1 \delta} \right)}
\end{align*}
Consequently,
\begin{align}
\label{logratiolbdelta}
\log &\lrp{ \frac{2k U_{T-1}}{\abs{M_{T-1}} + 2 \sqrt{k U_{T-1}} } }
\geq \log \lrp{ \frac{2k U_{T-1}}{ \sqrt{ 2k U_{T-1} \log \left( \frac{ \log \lrp{ k U_{T-1} } }{2 C_1 \delta} \right)} + 2 \sqrt{k U_{T-1}} } } \nonumber \\
&= \log \lrp{ \frac{\sqrt{2k U_{T-1}}}{ \sqrt{ \log \left( \frac{ \log \lrp{ k U_{T-1} } }{2 C_1 \delta} \right)} + \sqrt{2} } } 
\stackrel{(a)}{\geq} \log \lrp{ \frac{\sqrt{2k U_{T-1}}}{ \sqrt{ \log \left( \log^2 \lrp{ k U_{T-1} } \right)} + \sqrt{2} } } \nonumber \\
&\stackrel{(b)}{\geq} \log \lrp{ \frac{\sqrt{k U_{T-1}}}{ \lrp{ k U_{T-1} }^{1/16} + 1 } }
\stackrel{(c)}{\geq} \frac{1}{4} \log \lrp{ k U_{T-1} }
\end{align}
where $(a)$ uses that $\delta \geq \frac{4}{\log \lrp{ k U_{T-1} }}$ and $8 C_1 \geq 1$, 
$(b)$ uses that $\log \left( \log \lrp{ k U_{T-1} } \right) \leq (k U_{T-1})^{1/8} $, 
and $(c)$ uses that $(k U_{T-1})^{1/16} + 1 \leq (k U_{T-1})^{1/4}$ for $k U_{T-1} = k (\tau - 1) \geq e^4 \ln(2/\delta) - k \geq 26$. 
Therefore, 
\begin{align*}
\frac{1}{\sqrt{\delta \log \lrp{ \frac{2k U_{T-1}}{\abs{M_{T-1}} + 2 \sqrt{k U_{T-1}} } } }} 
\leq \frac{1}{\sqrt{\frac{ \delta}{4} \log \lrp{ k U_{T-1} } }}
\leq 1
\end{align*}
after substituting \eqref{logratiolbdelta} and again using $\delta \geq \frac{4}{ \log \lrp{ k U_{T-1} }}$.
\end{proof}

Chebyshev's Integral Inequality is a standard result, 
but we give a short proof for completeness.
\begin{lem}[Chebyshev's Integral Inequality]
\label{lem:chebyintineq}
If $f(x)$ and $g(x)$ are respectively monotonically increasing and decreasing functions over an interval $(a,b]$, 
and $\int_a^b f(x) dx$ and $\int_a^b g(x) dx$ are both defined and finite, then
$$ \int_a^b f(x) g(x) dx \leq \frac{1}{b-a} \lrp{\int_a^b f(x) dx} \lrp{\int_a^b g(x) dx} $$
\end{lem}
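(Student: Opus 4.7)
The plan is to use the classical pairing trick: since $f$ is increasing and $g$ is decreasing on $(a,b]$, for any two points $x,y \in (a,b]$ the quantities $f(x)-f(y)$ and $g(x)-g(y)$ have opposite signs (or one is zero). This immediately yields the pointwise inequality
\[
\bigl(f(x)-f(y)\bigr)\bigl(g(x)-g(y)\bigr) \le 0,
\]
which after expansion becomes
\[
f(x)g(x) + f(y)g(y) \le f(x)g(y) + f(y)g(x).
\]

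Next I would integrate this inequality over the square $(a,b] \times (a,b]$ in the variables $x$ and $y$. The left-hand side, by symmetry in $x$ and $y$, evaluates to $2(b-a)\int_a^b f(x)g(x)\,dx$, because each of the two terms contributes $(b-a)\int_a^b f(x)g(x)\,dx$ after integrating out the free variable. The right-hand side is a product of one-dimensional integrals: $\int_a^b\!\int_a^b f(x)g(y)\,dx\,dy = \bigl(\int_a^b f(x)\,dx\bigr)\bigl(\int_a^b g(y)\,dy\bigr)$, and similarly for the symmetric term, giving a total of $2\bigl(\int_a^b f\bigr)\bigl(\int_a^b g\bigr)$.

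Dividing both sides by $2(b-a)$ gives the claimed inequality. The use of Fubini/Tonelli to separate the double integral is justified by the hypothesis that $\int_a^b f$ and $\int_a^b g$ are finite; the monotonicity of $f$ and $g$ on a bounded interval ensures measurability and in particular that the product $fg$ is measurable, with the double integral well-defined (possibly $-\infty$ on the left, but bounded above by the finite right-hand side). There is no real obstacle here beyond the standard care required to justify swapping the order of integration, which is immediate from Tonelli applied to $|f(x)-f(y)||g(x)-g(y)|$ on the bounded square.
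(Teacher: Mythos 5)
Your proof is correct and uses exactly the same argument as the paper: the pointwise inequality $(f(x)-f(y))(g(x)-g(y))\le 0$ integrated over the square $(a,b]\times(a,b]$, expanded and simplified. No issues.
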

\begin{proof}
By the monotonicity properties of the functions, we know for any $x, y \in (a,b]$ that $(f(x) - f(y)) (g(x) - g(y)) \leq 0$. 
Therefore,
\begin{align*}
0 &\geq \int_a^b \int_a^b (f(x) - f(y)) (g(x) - g(y)) \;dx\; dy \\
&= 2 (b-a) \int_a^b f(x) g(x) dx - 2 \lrp{\int_a^b f(x) dx} \lrp{\int_a^b g(x) dx}
\end{align*}
which yields the result upon simplification.
\end{proof}

%--------------------------------------------------------------------------------------------------------------------------------------------------------------------------------------------------------------------------------------
%---------------------------------------------------------------------------------------------------------------------------------------------------------------------------------------------------------------------------------------
%--------------------------------------------------------------------------------------------------------------------------------------------------------------------------------------------------------------------------------------
%---------------------------------------------------------------------------------------------------------------------------------------------------------------------------------------------------------------------------------------

\section{Miscellaneous Results: Concentration Bounds and Extensions}
\label{sec:intgsupermartlem1}

\subsection{Proof of Lemma \ref{lem:intgsupermartlem1}}

As outlined in Section \ref{sec:prelimadelta}, 
we choose $\lambda$ stochastically from a probability space $(\Omega_\lambda, \cF_\lambda, P_\lambda)$ such that 
$\displaystyle P_\lambda (d \lambda) = \frac{d \lambda}{\abs{\lambda} \lrp{ \log \frac{1}{\abs{\lambda}} }^2} $ on $\lambda \in [-e^{-2}, e^{-2}] \setminus \{ 0 \}$.

Take an arbitrary time $t \geq \tau_0 $.
For outcomes within $A_\delta$, we have the following: 
\begin{align}
\mathbb{E}^{\lambda} \left[ X_t^\lambda \right] 
&= \int_{-1/e^2}^{0} \expp{ \lambda M_t - \frac{\lambda^2}{2} U_t } \frac{d \lambda}{- \lambda \lrp{ \log \frac{1}{- \lambda} }^2} + 
\int_{0}^{1/e^2} \expp{ \lambda M_t - \frac{\lambda^2}{2} U_t } \frac{d \lambda}{\lambda \lrp{ \log \frac{1}{\lambda} }^2} \nonumber \\
&= \expp{ \frac{M_t^2}{2 U_t} } \Bigg[ \int_{-1/e^2}^{0} e^{ - \frac{1}{2} U_t \lrp{\lambda - \frac{M_t}{U_t}}^2 } \frac{d \lambda}{- \lambda \lrp{ \log \frac{1}{- \lambda} }^2} + 
\int_{0}^{1/e^2} e^{ - \frac{1}{2} U_t \lrp{\lambda - \frac{M_t}{U_t}}^2 } \frac{d \lambda}{\lambda \lrp{ \log \frac{1}{\lambda} }^2} \Bigg] \nonumber \\
\label{eq:casesintmix}
&\geq \expp{ \frac{M_t^2}{2 U_t} } 
\int_{0}^{1/e^2} e^{ - \frac{1}{2} U_t \lrp{\lambda - \frac{\abs{M_t}}{U_t}}^2 } \frac{d \lambda}{\lambda \lrp{ \log \frac{1}{\lambda} }^2} \\
\label{eq:crapintlb}
&\geq \expp{ \frac{M_t^2}{2 U_t} } \int_{\frac{\abs{M_t}}{U_t}\lrp{1 - \sqrt{k}} }^{ \frac{\abs{M_t}}{U_t}\lrp{1 + \sqrt{k}} } 
e^{ - \frac{1}{2} U_t \lrp{\lambda - \frac{\abs{M_t}}{U_t}}^2 } \frac{d \lambda}{\lambda \lrp{ \log \frac{1}{\lambda} }^2} \\
&\geq \expp{ \frac{M_t^2}{2 U_t} } \expp{ - \frac{1}{2} k U_t \lrp{ \frac{M_t}{U_t } }^2 } 
\int_{\frac{\abs{M_t}}{U_t}\lrp{1 - \sqrt{k}} }^{ \frac{\abs{M_t}}{U_t}\lrp{1 + \sqrt{k}} } 
\frac{d \lambda}{\lambda \lrp{ \log \frac{1}{\lambda} }^2} \nonumber \\
&= \expp{ \frac{M_t^2}{2 U_t} (1-k) } 
\frac{\log \lrp{\frac{1 + \sqrt{k}}{1 - \sqrt{k}} } } { \log \lrp{\frac{U_t}{\abs{M_t} \lrp{1 + \sqrt{k}}} } \log \lrp{\frac{U_t}{\abs{M_t} \lrp{1 - \sqrt{k}}} } } \nonumber \\
\label{laplacelbeqn}
&\geq 2 \expp{ \frac{M_t^2}{2 U_t} (1-k) } \frac{1 } { \log \lrp{\frac{U_t}{\abs{M_t} \lrp{1 + \sqrt{k}}} } \log \lrp{\frac{U_t}{\abs{M_t} \lrp{1 - \sqrt{k}}} } } \nonumber \\
&\geq \frac{ 2 \expp{ \frac{M_t^2}{2 U_t} (1-k) } } { \log^2 \lrp{\frac{U_t}{ \lrp{1 - \sqrt{k}} \abs{ M_t} } } }
\end{align}

Take $\tau$ to be any stopping time as in the lemma statement. Then from \eqref{laplacelbeqn},
\begin{align*}
\evp{ \mathbb{E}^{\lambda} \left[ X_{\tau}^\lambda \right] }{A_\delta}
\geq \evp{ \frac{ 2 \expp{ \frac{M_{\tau}^2}{2 U_{\tau}} (1-k) } } { \log^2 \lrp{\frac{U_{\tau}}{ \lrp{1 - \sqrt{k}} \abs{ M_{\tau}} } }} }{A_\delta} 
\end{align*}
finishing the proof.

\subsection{Hoeffding and Bernstein Concentration Bounds}
\label{sec:mainpfgen}

In this section, we show that the results of Section \ref{sec:unifbernbounds} can be proved 
through simple extensions of the proof of Theorem \ref{thm:newunifmart}.

That proof is the subject of Section \ref{sec:pfmainthm}. 
It applies to the Rademacher random walk, 
but uses the i.i.d. Rademacher assumption only through an exponential supermartingale construction (Lemma \ref{lem:supermartconstr}). 
Theorem \ref{thm:newunifmart} can be generalized significantly beyond the Rademacher random walk 
by simply replacing the construction with other similar exponential constructions, 
leaving the remainder of the proof essentially intact 
as presented in Section \ref{sec:pfmainthm}. 

To be specific, the rest of that proof works unchanged if the construction has the following properties:

\begin{enumerate}
\item The construction should be of the same form as Lemma \ref{lem:supermartconstr}: 
$X_t^\lambda = \expp{\lambda M_t - \frac{\lambda^2}{2} U_t }$ for some nondecreasing process $U_t$. 
(The proof of Theorem \ref{thm:newunifmart} sets $U_t = t$.)
\item $X_t^\lambda$ should be a supermartingale for $\lambda \in \lrp{- \frac{1}{e^2}, \frac{1}{e^2}} \setminus \{ 0 \}$.
\end{enumerate}

(The constant $\frac{1}{e^2}$ in these conditions is determined by the choice of averaging distribution over $\lambda$ 
(i.e., $P_\lambda$) in the proof of Lemma \ref{lem:intgsupermartlem1}. 
See Section \ref{sec:constschainatoms} for examples of other averaging distributions.)

Now we give two standard exponential supermartingale constructions with these properties. 
We first give a construction leading directly to Theorem \ref{thm:unifbmart}, 
when it is used to replace Lemma \ref{lem:supermartconstr} in the proof of Theorem \ref{thm:newunifmart}. 
\begin{lem}
\label{lem:bmartconstr}
Suppose the difference sequence is uniformly bounded, i.e. $\abs{\xi_t} \leq e^2$ a.s. for all $t$. 
Then the process $\displaystyle X_t^\lambda := \expp{ \lambda M_t - \lambda^2 (e-2) V_t }$ is a supermartingale 
for any $\lambda \in \left[ - \frac{1}{e^2} , \frac{1}{e^2} \right]$.
\end{lem}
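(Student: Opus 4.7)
The plan is to prove this by the standard one-step argument: bound the conditional MGF of each increment $\xi_t$ using the classical ``Bernstein inequality'' $e^y \leq 1 + y + (e-2) y^2$ valid on $|y| \leq 1$, and then use the fact that $V_t - V_{t-1} = \mathbb{E}[\xi_t^2 \mid \cF_{t-1}]$ to telescope.

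First I would record the scalar inequality
\[
e^y \leq 1 + y + (e-2) y^2 \qquad \text{for all } |y| \leq 1.
\]
This is a routine convexity fact: the function $g(y) = 1 + y + (e-2) y^2 - e^y$ satisfies $g(0) = 0$, $g(1) = 0$, and $g''(y) = 2(e-2) - e^y$ has a single sign change on $[-1,1]$, so $g$ is concave then convex on $[-1,1]$ with boundary values $g(-1) = (e-2) - e^{-1} > 0$ and $g(1) = 0$. Hence $g \geq 0$ throughout $[-1,1]$. I would just cite this as a standard Bernstein inequality rather than carry out the verification in detail.

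Next, for $|\lambda| \leq 1/e^2$ and $|\xi_t| \leq e^2$ a.s., we have $|\lambda \xi_t| \leq 1$, so applying the above with $y = \lambda \xi_t$ and taking conditional expectation given $\cF_{t-1}$ (using the martingale property $\evp{\xi_t \mid \cF_{t-1}}{} = 0$) yields
\[
\evp{e^{\lambda \xi_t} \mid \cF_{t-1}}{} \leq 1 + (e-2) \lambda^2 \evp{\xi_t^2 \mid \cF_{t-1}}{} \leq \expp{(e-2) \lambda^2 \evp{\xi_t^2 \mid \cF_{t-1}}{}},
\]
where the last step uses $1 + u \leq e^u$.

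Finally, I would write out the telescoping step:
\[
\evp{X_t^\lambda \mid \cF_{t-1}}{} = X_{t-1}^\lambda \cdot e^{-\lambda^2 (e-2)(V_t - V_{t-1})} \cdot \evp{e^{\lambda \xi_t} \mid \cF_{t-1}}{} \leq X_{t-1}^\lambda,
\]
since $V_t - V_{t-1} = \evp{\xi_t^2 \mid \cF_{t-1}}{}$ exactly cancels the MGF bound. This gives the supermartingale property. There is no real obstacle here; the only subtlety is verifying that the allowed range $|\lambda| \leq 1/e^2$ precisely matches the range $|\lambda \xi_t| \leq 1$ on which the scalar Bernstein inequality holds, which is exactly why the constants $e^2$ appear in the hypothesis.
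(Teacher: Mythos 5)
Your proof is correct and takes essentially the same route as the paper's: the scalar inequality $e^y \leq 1 + y + (e-2)y^2$ on $|y|\leq 1$, the martingale property to kill the linear term, $1+u \leq e^u$, and the one-step telescoping with $V_t - V_{t-1} = \evp{\xi_t^2 \mid \cF_{t-1}}{}$. The only blemish is in your sketch of the scalar inequality: since $g''(y) = 2(e-2) - e^y$ decreases through zero, $g$ is convex \emph{then} concave (not the reverse), and nonnegative boundary values alone would not rule out an interior dip — the clean argument uses $g(0)=g'(0)=0$ with convexity near $0$ and then concavity with $g(1)=0$ on the right piece — but as both you and the paper treat this inequality as standard, this does not affect the proof.
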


\begin{proof}[Proof of Lemma \ref{lem:bmartconstr}]
It can be checked that $e^x \leq 1 + x + (e-2) x^2$ for $x \leq 1$. 
Then for any $\lambda \in \left[ - \frac{1}{e^2} , \frac{1}{e^2} \right]$ and $t \geq 1$, 
\begin{align*}
\evp{\expp{\lambda \xi_t} \mid \cF_{t-1}}{} &\leq 1 + \lambda \evp{\xi_t \mid \cF_{t-1}}{} + \lambda^2 (e-2) \evp{\xi_t^2 \mid \cF_{t-1}}{} \\
&= 1 + \lambda^2 (e-2) \evp{\xi_t^2 \mid \cF_{t-1}}{} \leq \expp{ \lambda^2 (e-2) \evp{\xi_t^2 \mid \cF_{t-1}}{}}
\end{align*}
using the martingale property on $\evp{\xi_t \mid \cF_{t-1}}{}$. 

Therefore, $\evp{\expp{\lambda \xi_t - \lambda^2 (e-2) \evp{\xi_t^2 \mid \cF_{t-1}}{} } \mid \cF_{t-1}}{} \leq 1$, so $\evp{X_t^\lambda \mid \cF_{t-1}}{} \leq X_{t-1}^\lambda$.
\end{proof}

The second construction leads similarly to Theorem \ref{thm:unifah}, 
when combined with the theorem's assumption of a uniformly bounded difference sequence; 
it has better constants than Lemma \ref{lem:bmartconstr}.

\begin{lem}
\label{lem:qmartconstr}
The process $\displaystyle X_t^\lambda := \expp{ \lambda M_t - \frac{\lambda^2}{6} \left( 2 V_t + Q_t \right)}$ is a supermartingale for any $\lambda \in \RR$.
\end{lem}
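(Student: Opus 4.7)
The plan is to verify the supermartingale property $\evp{X_t^\lambda \mid \cF_{t-1}}{} \leq X_{t-1}^\lambda$ directly, following the template of Lemmas \ref{lem:supermartconstr} and \ref{lem:bmartconstr}. Since $X_{t-1}^\lambda$ is $\cF_{t-1}$-measurable, $V_t - V_{t-1} = \evp{\xi_t^2 \mid \cF_{t-1}}{}$, and $Q_t - Q_{t-1} = \xi_t^2$, this reduces to establishing the one-step conditional moment bound
\begin{equation*}
\evp{\expp{\lambda \xi_t - \tfrac{\lambda^2}{6}\xi_t^2} \mid \cF_{t-1}}{} \;\leq\; \expp{\tfrac{\lambda^2}{3}\,\evp{\xi_t^2 \mid \cF_{t-1}}{}}
\end{equation*}
for every $\lambda \in \RR$ and $t \geq 1$.

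To get this, I would prove the pointwise scalar inequality
\begin{equation*}
\expp{u - \tfrac{u^2}{6}} \;\leq\; 1 + u + \tfrac{u^2}{3} \qquad \text{for all } u \in \RR,
\end{equation*}
apply it with $u = \lambda \xi_t$, take conditional expectations to annihilate the linear term via the martingale property $\evp{\xi_t \mid \cF_{t-1}}{} = 0$, and then use $1 + y \leq e^y$ on the nonnegative residual $\tfrac{\lambda^2}{3} \evp{\xi_t^2 \mid \cF_{t-1}}{}$. Conceptually this parallels the arguments for Lemmas \ref{lem:supermartconstr} and \ref{lem:bmartconstr}, except that we carry a pointwise $\xi_t^2$ ``penalty'' inside the exponent rather than on the compensator; this extra $Q_t$ term is exactly what lets us dispense with any boundedness or range assumption on $\xi_t$ and obtain the conclusion for all $\lambda \in \RR$.

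The main obstacle is therefore the scalar inequality $\expp{u - u^2/6} \leq 1 + u + u^2/3$. I would study $\phi(u) := \log\!\lrp{1 + u + u^2/3} - u + u^2/6$; the logarithm is well-defined globally since $1 + u + u^2/3$ has discriminant $1 - 4/3 < 0$ and so is strictly positive on $\RR$. A direct computation, after clearing the common denominator $1 + u + u^2/3$, yields
\begin{equation*}
\phi'(u) \;=\; \frac{u^3}{9 + 9u + 3u^2},
\end{equation*}
whose denominator is strictly positive for the same discriminant reason. Thus $\phi'(u)$ has the sign of $u$, so $\phi$ attains its minimum at $u = 0$, where $\phi(0) = 0$; hence $\phi \geq 0$ throughout $\RR$, which upon exponentiating gives the pointwise inequality and completes the proof.
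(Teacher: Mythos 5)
Your proof is correct and follows essentially the same route as the paper: the identical reduction to the scalar inequality $e^{u - u^2/6} \leq 1 + u + u^2/3$, followed by the martingale property to kill the linear term and $1+y \leq e^y$ on the residual. The only difference is in verifying that scalar inequality -- the paper shows the difference $f(x) = e^{x-x^2/6} - 1 - x - x^2/3$ is concave with $f(0)=f'(0)=0$, whereas you compute $\phi'(u) = u^3/(9+9u+3u^2)$ for the logarithmic form directly, which is a slightly cleaner calculus step but not a different argument.
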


\begin{proof}[Proof of Lemma \ref{lem:qmartconstr}]
Consider the following inequality: for all real $x$,
\begin{align}
\label{expuppboundquad}
\expp{x - \frac{1}{6} x^2} \leq 1 + x + \frac{1}{3} x^2
\end{align}
Suppose \eqref{expuppboundquad} holds. 
Then for any $\lambda \in \RR$ and $t \geq 1$, 
$\evp{\expp{\lambda \xi_t - \frac{\lambda^2}{6} \xi_t^2} \mid \cF_{t-1}}{} \leq 1 + \lambda \evp{\xi_t \mid \cF_{t-1}}{} + \frac{\lambda^2}{3} \evp{\xi_t^2 \mid \cF_{t-1}}{} = 1 + 
\frac{\lambda^2}{3} \evp{\xi_t^2 \mid \cF_{t-1}}{} \leq \expp{\frac{\lambda^2}{3} \evp{\xi_t^2 \mid \cF_{t-1}}{}}$
, by using the martingale property on $\evp{\xi_t \mid \cF_{t-1}}{}$. 
Therefore, we have that 
$$\evp{\expp{\lambda \xi_t - \frac{\lambda^2}{6} \xi_t^2 - \frac{\lambda^2}{3} \evp{\xi_t^2 \mid \cF_{t-1}}{} } \mid \cF_{t-1}}{} \leq 1$$
so $\evp{X_t^\lambda \mid \cF_{t-1}}{} \leq X_{t-1}^\lambda$ and the result is shown.

It only remains to prove \eqref{expuppboundquad}, which is equivalent to showing that the function $f(x) = \expp{x - \frac{1}{6} x^2} - 1 - x - \frac{1}{3} x^2 \leq 0$. 
This is done by examining derivatives.
Note that $f'(x) = \lrp{1 - \frac{x}{3}} \expp{x - \frac{1}{6} x^2} - 1 - \frac{2}{3} x$, and
$f''(x) = \lrp{-\frac{1}{3} + \left(1 - \frac{x}{3} \right)^2} \expp{x - \frac{1}{6} x^2} - \frac{2}{3} = \frac{2}{3} \lrp{e^y \lrp{1 - y} - 1}$
where $y := x - \frac{1}{6} x^2$. 
Here $e^y \leq \frac{1}{1-y}$ for $y < 1$, and $e^y (1-y) \leq 0$ for $y \geq 1$, so $f''(x) \leq 0$ for all $x$. 
Since $f'(0) = f(0) = 0$, the function $f$ attains a maximum of zero over its domain, proving \eqref{expuppboundquad} and the result.
\end{proof}

In order for $X_t^\lambda$ to satisfy the conditions at the beginning of this section, 
the differences $\xi_t$ need not be uniformly bounded, but rather can simply satisfy conditions on their higher moments. 
For completeness, here is an example of much weaker sufficient conditions; 
this is a direct adaptation of the method conventionally used to prove a general version of Bernstein's inequality (\cite{BLM13}, Thm. 2.10).

\begin{lem}
\label{lem:bernubrelaxconstr}
%Define the cumulative $k^{th}$-moment process of the martingale $M_t$ as 
%$V_t^{(k)} = \sum_{i=1}^t \evp{\xi_i^k \mid \cF_{i-1}}{}$.
Suppose that for all $k \geq 3$, there is a constant $c> 0$ such that for all $i$, 
$\displaystyle \evp{\xi_i^k \mid \cF_{i-1}}{} \leq \frac{1}{2} k! (e/ \sqrt{2})^{2(k-2)} \evp{\xi_i^2 \mid \cF_{i-1}}{}$. 
Then for any $\lambda \in \lrp{- \frac{1}{e^2}, \frac{1}{e^2}}$,
the process 
$$ X_t^\lambda := \expp{ \lambda M_t - \lambda^2 V_t } $$
 is a supermartingale.
\end{lem}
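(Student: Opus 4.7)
The plan is to mirror the pattern of Lemmas \ref{lem:supermartconstr}, \ref{lem:bmartconstr}, and \ref{lem:qmartconstr}: reduce the claim to the conditional MGF inequality
$$\evp{\expp{\lambda \xi_t} \mid \cF_{t-1}}{} \leq \expp{\lambda^2 \evp{\xi_t^2 \mid \cF_{t-1}}{}}$$
valid on $|\lambda| < 1/e^2$. Once this is established, multiplying both sides by $\expp{\lambda M_{t-1} - \lambda^2 V_{t-1}}$ and noting that $V_t - V_{t-1} = \evp{\xi_t^2 \mid \cF_{t-1}}{}$ yields $\evp{X_t^\lambda \mid \cF_{t-1}}{} \leq X_{t-1}^\lambda$ directly.

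To get the MGF bound, I would Taylor expand the exponential and take the conditional expectation term by term. The martingale property kills the $k=1$ term, leaving
$$\evp{\expp{\lambda \xi_t} \mid \cF_{t-1}}{} = 1 + \sum_{k=2}^\infty \frac{\lambda^k}{k!} \evp{\xi_t^k \mid \cF_{t-1}}{}.$$
To control the summand for odd $k$ (where $\xi_t^k$ may be negative) one first bounds $\lambda^k \evp{\xi_t^k \mid \cF_{t-1}}{} \leq |\lambda|^k \evp{|\xi_t|^k \mid \cF_{t-1}}{}$; I read the stated moment hypothesis as applying to $|\xi_t|^k$, which is the standard Bernstein-type condition (the factor $(e/\sqrt{2})^{2(k-2)} = (e^2/2)^{k-2}$ makes the intended form clear). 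Applying the hypothesis to each term and summing yields
$$\evp{\expp{\lambda \xi_t} \mid \cF_{t-1}}{} \leq 1 + \frac{\evp{\xi_t^2 \mid \cF_{t-1}}{}}{2} \sum_{k=2}^\infty |\lambda|^k (e^2/2)^{k-2} = 1 + \frac{\lambda^2 \evp{\xi_t^2 \mid \cF_{t-1}}{}}{2(1 - |\lambda| e^2/2)},$$
where the geometric series converges because $|\lambda| e^2/2 < 1/2 < 1$.

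The last step uses $1 + x \leq e^x$ to pass to an exponential, so it suffices to verify
$$\frac{1}{2(1 - |\lambda| e^2/2)} \leq 1,$$
which rearranges to $|\lambda| e^2 \leq 1$, i.e. exactly the stated range $\lambda \in (-e^{-2}, e^{-2})$. This shows the constant $1/e^2$ is tight for this proof technique, and completes the supermartingale property. I do not anticipate a genuine obstacle; the only delicate point is the absolute-value interpretation of the moment condition for odd $k$, which is a standard cosmetic adjustment in deriving Bernstein-type inequalities under unbounded differences.
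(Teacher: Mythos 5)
Your proposal is correct and follows essentially the same route as the paper's proof: Taylor expansion of the conditional MGF, the martingale property killing the linear term, the moment hypothesis collapsing the tail into a geometric series in $\abs{\lambda}e^2/2$, and $1+x\leq e^x$ to close. Your explicit handling of the odd-$k$ terms via absolute values is a welcome clarification of a point the paper glosses over, but it does not change the argument.
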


\begin{proof}[Proof of Lemma \ref{lem:bernubrelaxconstr}]
Taking the Taylor expansion of the exponential function, 
\begin{align*}
\evp{\expp{\lambda \xi_t} \mid \cF_{t-1}}{} &= 1 + \lambda \evp{\xi_t \mid \cF_{t-1}}{} + 
\evp{ \sum_{k = 2}^{\infty} \frac{\lambda^k}{k!} \xi_t^{k} \mid \cF_{t-1}}{} \\
&= 1 + \frac{\lambda^2}{2} \evp{\xi_t^2 \mid \cF_{t-1}}{} + \sum_{k = 3}^{\infty} \frac{\lambda^k}{k!} \evp{ \xi_t^{k} \mid \cF_{t-1}}{} \\
&\leq 1 + \frac{\lambda^2}{2} \evp{\xi_t^2 \mid \cF_{t-1}}{} + \frac{1}{2} \evp{\xi_t^2 \mid \cF_{t-1}}{} \sum_{k = 3}^{\infty} \lambda^k (e/ \sqrt{2})^{2(k-2)} \\
&\leq 1 + \frac{\lambda^2}{2} \evp{\xi_t^2 \mid \cF_{t-1}}{} \sum_{k = 2}^{\infty} (\abs{\lambda/2} e^2)^{k-2}
\end{align*}
using the martingale property on $\evp{\xi_t \mid \cF_{t-1}}{}$, monotone convergence, and the lemma assumption. 
For all $\abs{\lambda} < \frac{2}{e^2}$, the infinite geometric series is summable, giving
$\displaystyle \evp{\expp{\lambda \xi_t} \mid \cF_{t-1}}{} \leq 1 + \frac{\lambda^2}{2 (1 - \abs{\lambda/2} e^2)} \evp{\xi_t^2 \mid \cF_{t-1}}{}$.
So for all $\abs{\lambda} < \frac{1}{e^2}$, 
\begin{align*}
\evp{\expp{\lambda \xi_t} \mid \cF_{t-1}}{} 
&\leq 1 + \frac{\lambda^2}{2 (1 - (1/2))} \evp{\xi_t^2 \mid \cF_{t-1}}{} 
\leq \expp{ \lambda^2 \evp{\xi_t^2 \mid \cF_{t-1}}{} }
\end{align*}
Therefore, 
%\begin{align*}
$\displaystyle
\evp{X_t^\lambda \mid \cF_{t-1}}{} 
\leq X_{t-1}^\lambda \evp{\expp{\lambda \xi_t - \lambda^2 \evp{\xi_t^2 \mid \cF_{t-1}}{}  } \mid \cF_{t-1}}{}
\leq X_{t-1}^\lambda
$.%\end{align*}
\end{proof}

\subsection{Initial Time Conditions}
\label{sec:inittime}
As discussed in Section \ref{sec:extensions} (in Remark \ref{remark:inittime}), 
this section outlines how to remove the initial time condition for our martingale concentration bounds, 
using the proof technique of this manuscript.
To demonstrate, we extend Theorem \ref{thm:genunifmart} here to hold over all times, 
using the placeholder variance process $U_t$ and other notation as in Section \ref{sec:pfmainthm} (e.g. $\lambda_0$). 
The Hoeffding- and Bernstein-style results of Section \ref{sec:unifbernbounds} can be extended in exactly the same way, by redefining $U_t$. 
%We can therefore use Lemma \ref{lem:btstrpmgf}.

%By examining Theorem \ref{thm:genunifmart} here, we see that  
It suffices to show a uniform concentration bound for $\abs{M_t}$ over 
$t < \tau_2 (\delta) := \min \left\{ s: U_s \geq \frac{2}{\lambda_0^2} \log \left( \frac{2}{\delta} \right) \right\}$. 

\begin{thm}
\label{thm:freedmanineq}
Fix any $\delta > 0$. With probability $\geq 1-\delta$, 
for all $t < \tau_2 (\delta) $ simultaneously, 
$$ \abs{M_{t}} \leq \frac{2}{\lambda_0} \log \lrp{\frac{2}{\delta}} $$
\end{thm}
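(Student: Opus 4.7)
My plan is to use essentially the same argument that proves Theorem \ref{thm:varunifub}, but with a different choice of stopping time tailored to the small-time regime. The crucial reusable fact is Lemma \ref{lem:btstrpmgf}, which already gives $\evp{\expp{\lambda_0 \abs{M_\tau} - \frac{\lambda_0^2}{2} U_\tau}}{} \leq 2$ at an \emph{arbitrary} (possibly infinite) stopping time $\tau$. The entire job is to choose $\tau$ so that the event of a failure of the desired bound pumps up this exponential enough to force the failure probability below $\delta$.

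Concretely, I would set
\[
\tau \;=\; \min \lrb{ t \geq 0 \;:\; \abs{M_t} > \tfrac{2}{\lambda_0} \log \lrp{\tfrac{2}{\delta}} \;\;\text{or}\;\; U_t \geq \tfrac{2}{\lambda_0^2} \log \lrp{\tfrac{2}{\delta}} },
\]
which is a stopping time (both events are adapted, and $U_t$ is nondecreasing so $\{U_t \geq c\}$ is a stopping event). It suffices to prove $\pr{\tau < \tau_2(\delta)} \leq \delta$, since on $\{\tau < \tau_2(\delta)\}$ the first disjunct must fire, exhibiting a witness $t < \tau_2(\delta)$ for which the bound fails.

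On the event $\{\tau < \tau_2(\delta)\}$, by construction $\abs{M_\tau} > \tfrac{2}{\lambda_0} \log \lrp{\tfrac{2}{\delta}}$ and $U_\tau < \tfrac{2}{\lambda_0^2} \log \lrp{\tfrac{2}{\delta}}$, so
\[
\lambda_0 \abs{M_\tau} - \tfrac{\lambda_0^2}{2} U_\tau
\;>\; 2 \log \lrp{\tfrac{2}{\delta}} - \log \lrp{\tfrac{2}{\delta}}
\;=\; \log \lrp{\tfrac{2}{\delta}}.
\]
Combining this with Lemma \ref{lem:btstrpmgf},
\[
2 \;\geq\; \evp{\expp{\lambda_0 \abs{M_\tau} - \tfrac{\lambda_0^2}{2} U_\tau}}{}
\;\geq\; \evp{\expp{\lambda_0 \abs{M_\tau} - \tfrac{\lambda_0^2}{2} U_\tau} \mid \tau < \tau_2(\delta)}{} \pr{\tau < \tau_2(\delta)}
\;>\; \tfrac{2}{\delta} \pr{\tau < \tau_2(\delta)},
\]
so $\pr{\tau < \tau_2(\delta)} < \delta$, as required.

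The only step that requires any care is the first: Lemma \ref{lem:btstrpmgf} must apply at this $\tau$, which can be infinite with positive probability (the process may never hit the threshold and $U_t$ may not reach $\tfrac{2}{\lambda_0^2}\log(2/\delta)$). Fortunately, that lemma was stated for arbitrary stopping times precisely by invoking the nonnegative-supermartingale form of optional stopping (Theorem \ref{thm:optstoppingsupermart}), so no additional work is needed. I do not expect a real obstacle here; the proof is essentially the bootstrap argument of Section \ref{sec:btstrp} specialized to the complementary time regime $t < \tau_2(\delta)$, where the $\log(2/\delta)$ term dominates and absorbs the slack.
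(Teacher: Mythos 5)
Your proposal is correct and is essentially the paper's own proof: the paper defines the stopping time as $\tau = \min\{t \leq \tau_2 : \abs{M_t} > \frac{2}{\lambda_0}\log(2/\delta)\}$ and runs the identical computation with Lemma \ref{lem:btstrpmgf}, using $U_\tau \leq \frac{2}{\lambda_0^2}\log(2/\delta)$ on $\{\tau < \tau_2\}$ to get the $\frac{2}{\delta}$ factor. Your phrasing of the stopping time via the explicit disjunct is just a cosmetic variant of the same argument.
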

\begin{proof}
Write $\tau_2 (\delta)$ as $\tau_2$ for convenience.
Define the stopping time 
$ \tau = \min \left\{ t \leq \tau_2 : \abs{M_t} > \frac{2}{\lambda_0} \log \lrp{\frac{2}{\delta}} \right\} $.
Then it suffices to prove that $P(\tau < \tau_2) \leq \delta$.

On the event $\{ \tau < \tau_2 \}$, we have $\abs{M_{\tau}} > \frac{2}{\lambda_0} \log \lrp{\frac{2}{\delta}}$ by definition of $\tau$. 
Therefore, using Lemma \ref{lem:btstrpmgf},
\begin{align*}
2 &\geq \evp { \expp{ \lambda_0 \abs{M_\tau} - \frac{\lambda_0^2}{2} U_\tau} }{} 
\geq \evp { \expp{ \lambda_0 \abs{M_\tau} - \frac{\lambda_0^2}{2} U_\tau} \mid \tau < \tau_2}{} P(\tau < \tau_2) \\
&\stackrel{(a)}{\geq} \evp { \expp{ \lambda_0 \lrp{\frac{2}{\lambda_0} \log \lrp{\frac{2}{\delta}}} - \frac{\lambda_0^2}{2} \lrp{\frac{2}{\lambda_0^2} \log \lrp{\frac{2}{\delta}}} } 
\mid \tau < \tau_2}{} P(\tau < \tau_2) \\
&= \evp { \expp{ \log \lrp{\frac{2}{\delta}} } }{} P(\tau < \tau_2) 
= \frac{2}{\delta} P(\tau < \tau_2)
\end{align*}
where $(a)$ uses that $\abs{M_{\tau}} > \frac{2}{\lambda_0} \log \lrp{\frac{2}{\delta}}$ when $\tau < \tau_2$, 
and that $U_\tau \leq \frac{2}{\lambda_0^2} \log \lrp{\frac{2}{\delta}}$ since $\tau < \tau_2$. 
Therefore, $P(\tau < \tau_2) \leq \delta$, as desired. 
\end{proof}

Taking a union bound of Theorem \ref{thm:freedmanineq} with Theorem \ref{thm:genunifmart} 
gives that w.h.p., for all $t$, 
\begin{align*}
\abs{M_t} \leq \cO{ \sqrt{ U_t \lrp{ \log \log U_t + \log \frac{1}{\delta} }} + \log \lrp{\frac{1}{\delta}} }
\end{align*}
This matches the rate of Bennett/Bernstein inequalities which hold for a fixed time \cite{BLM13}, 
except for an extra $\sqrt{\log \log U_t}$ factor on the Gaussian-regime term that accounts for the uniformity over time.

\subsection{Sharper Constants}
\label{sec:constschainatoms}

The leading proportionality constant on the iterated-logarithm term in Theorem \ref{thm:genunifmart} is $\sqrt{6}$, 
above the LIL's asymptotic $\sqrt{2}$. 
The reasons for this relate to the proof of 
Lemma \ref{lem:intgsupermartlem1} (in Section \ref{sec:intgsupermartlem1}).
There are two ways to tighten this proof, which together yield the optimal $\sqrt{2}$ constant. 

First, the mixed process $\mathbb{E}^{\lambda} \left[ X_t^\lambda \right]$ in this proof 
can be written as a probability integral of a Gaussian-like function, 
which we crudely lower-bound around the peak (\eqref{eq:crapintlb}). 
A more refined lower bound here leads to a sharper final result (by a factor of $\sqrt{3/2}$). 

Second, the mixing distribution over $\lambda$ can be refined, 
so that the bound of Lemma \ref{lem:intgsupermartlem1} has a $\log(\cdot)$ factor in its denominator instead of a $\log^2 (\cdot)$. 
This decreases the proportionality constant (by a further factor of $\sqrt{2}$), to optimality.

Specifically, we now present a countably infinite family of averaging distributions for $\lambda$ 
that generalize the one used in the main proof, and lead to optimal constants if used analogously instead in the proof. 
To describe this set, 
define $\log_v (x) = \underbrace{\log \log \dots \log}_{\text{$v$ times}} (x) $ 
and $\exp_v (x) = \underbrace{\exp \exp \dots \exp}_{\text{$v$ times}} (x)$ for $v = 1, 2, \dots$. 
The following family of probability distributions is indexed by $v$:
$$ P_\lambda^v (d \lambda) = \frac{d \lambda}{\abs{\lambda} \log_v \lrp{ \frac{1}{\abs{\lambda}}} 
\left[ \prod_{i=1}^v \log_i \lrp{ \frac{1}{\abs{\lambda}}} \right] } \quad \text{for} \quad \lambda \in \left[ - \frac{1}{\exp_v (2)}, \frac{1}{\exp_v (2)} \right] \setminus \{ 0 \} $$
Note that $P_\lambda^1$ is used to mix over $\lambda$ in the main proof of this paper. 

Suppose the distribution $P_\lambda^v$ is used in the proof for some $v$. 
The first stage of the proof to prove a uniform LLN bound analogous to Theorem \ref{thm:varunifub}. 
The constants will be different and the initial time condition more restrictive to account for the smaller support of $P_\lambda^v$ relative to $P_\lambda^1$
(instead of an initial time $\tau_0$ as defined in Theorem \ref{thm:genunifmart}, it will use $\tau_0^v \geq \tau_0$, with $\tau_0 := \tau_0^1$), 
but otherwise this step follows Section \ref{sec:pfmainthm} closely.

%For any $v$, using $P_\lambda^v$ to mix over $\lambda$ in our proof technique 
%(in conjunction with an appropriate LLN, like Theorem \ref{thm:varunifub}) 
%gives the result the optimal iterated-logarithm rate. 
%$P_\lambda^1, P_\lambda^2, \dots$ require progressively more stringent LLNs 
%(because the support of $P_\lambda^v$ decreases with increasing $v$) 
%but lead to progressively tighter derived LIL bounds.

Working within the ``good" $(1-\delta)$-probability event of the resulting LLN (call it $A_\delta^v$, 
so that $A_\delta := A_\delta^1$ is the event used in the main proof), 
the proof then requires a moment bound analogous to Lemma \ref{lem:intgsupermartlem1}. 
This is where the averaging distribution $P_\lambda^v$ plays a direct role, 
replacing $P_\lambda^1$ in the proof of Lemma \ref{lem:intgsupermartlem1}. 

To illustrate, we sketch a refined version of Lemma \ref{lem:intgsupermartlem1}, 
incorporating the changes we have just described in this subsection, 
to achieve optimal constants. 
[Note that for any $v$, $P_\lambda^v$ has a tractable closed-form antiderivative: 
for $\lambda > 0$, it is $ \lrp{\log_v \lrp{ \frac{1}{\lambda}}}^{-1}$].

\begin{lem}[Substitute for Lemma \ref{lem:intgsupermartlem1}]
\label{lem:bettermgfbound}
Suppose the averaging distribution over $\lambda$ is $P_\lambda^v$ for some $v \geq 1$.
Then for any stopping time $\tau \geq \tau_0^v$,
$$ \evp{\mathbb{E}^{\lambda} \left[ X_{\tau}^\lambda \right]}{A_{\delta}^v} 
\geq \evp{ \frac{\expp{ \frac{M_{\tau}^2}{2 U_{\tau}} }}
{e^{2} \max \left[ \log_{v}^{+} \lrp{\sqrt{U_t} } \left[ \prod_{i=1}^{v} \log_i^{+} \lrp{ \sqrt{U_t} } \right] , \; \log_{v}^{+} \lrp{\frac{U_t}{\abs{M_t}} } \right]} }{A_\delta^v} $$
where $\log_{i}^{+} (x) = \log_{i} [\max \lrp{\exp_{i} (1), x}]$.
\end{lem}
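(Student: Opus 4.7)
The plan is to adapt the proof of Lemma~\ref{lem:intgsupermartlem1} in Section~\ref{sec:intgsupermartlem1} by substituting the averaging distribution $P_\lambda^v$ for $P_\lambda^1$ and by sharpening the Laplace-type estimate used in the middle of that proof. These two changes account respectively for the factors of $\sqrt{2}$ and $\sqrt{3/2}$ needed to bring the leading LIL constant down from $\sqrt{6}$ to its optimal value $\sqrt{2}$.

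First I would mirror the opening steps of the original proof. Tonelli's theorem, the symmetry of $P_\lambda^v$ about $0$, and completion of the square in $\lambda M_\tau - \frac{\lambda^2}{2} U_\tau = \frac{M_\tau^2}{2 U_\tau} - \frac{U_\tau}{2} \lrp{\lambda - M_\tau/U_\tau}^2$ let me fold the two-sided integral onto positive $\lambda$, yielding
$$ \mathbb{E}^{\lambda}\left[X_\tau^\lambda\right] \;\geq\; \expp{\frac{M_\tau^2}{2 U_\tau}} \cdot I_\tau, \qquad I_\tau := \int_0^{1/\exp_v(2)} \expp{-\frac{U_\tau}{2}\lrp{\lambda - \frac{\abs{M_\tau}}{U_\tau}}^2} w(\lambda)\, d\lambda, $$
where $w(\lambda) := [\lambda \log_v(1/\lambda) \prod_{i=1}^{v} \log_i(1/\lambda)]^{-1}$ is the positive-side density of $P_\lambda^v$ and $\mu := \abs{M_\tau}/U_\tau$.

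The first key algebraic observation—flagged in the excerpt—is that $w$ admits the closed-form antiderivative $W(\lambda) = 1/\log_v(1/\lambda)$ on $(0, 1/\exp_v(2))$, so any weight integral telescopes to $\int_a^b w\, d\lambda = W(b) - W(a)$. This is the reason the denominator in the conclusion carries only a single $\log_v$ factor rather than the $\log^2$ appearing in Lemma~\ref{lem:intgsupermartlem1}, and is responsible for the $\sqrt{2}$ improvement. Second, rather than restricting integration to $[(1-\sqrt{k})\mu, (1+\sqrt{k})\mu]$ as in the proof of Lemma~\ref{lem:intgsupermartlem1}—which wastes a factor of $\expp{-kM_\tau^2/(2U_\tau)}$ from the Gaussian—I would use an integration window of length $O(1/\sqrt{U_\tau})$, of order the Gaussian's standard deviation, on which the exponential factor is bounded below by an absolute constant. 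This leaves the full exponent $M_\tau^2/(2U_\tau)$ (rather than the reduced $(1-k)M_\tau^2/(2U_\tau)$) in the numerator, giving the $\sqrt{3/2}$ improvement.

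Next I would split into cases on the size of $\mu$ relative to $1/\sqrt{U_\tau}$. \emph{Case A} ($\mu \gtrsim 1/\sqrt{U_\tau}$, i.e.\ $\abs{M_\tau} \gtrsim \sqrt{U_\tau}$): centre the $O(1/\sqrt{U_\tau})$-wide window about $\mu$, apply the telescoping identity, and first-order Taylor-expand $W$ about $\lambda = \mu$ to obtain $I_\tau \gtrsim 1/[e^2 \log_v^+(U_\tau/\abs{M_\tau})]$, matching the second term of the maximum. \emph{Case B} ($\mu \lesssim 1/\sqrt{U_\tau}$, in particular allowing $M_\tau = 0$): the Gaussian factor is bounded below by a constant on the window $[1/\sqrt{U_\tau},\, 2/\sqrt{U_\tau}]$, so applying the telescoping identity there and Taylor-expanding $\log_v$ through all $v$ levels yields $I_\tau \gtrsim 1/[e^2 \log_v^+(\sqrt{U_\tau}) \prod_{i=1}^v \log_i^+(\sqrt{U_\tau})]$, matching the first term of the maximum. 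Since only one of the two bounds is needed on each sample point, they combine via the maximum in the denominator, and the truncation $\log_v^+(x) = \log_v[\max(\exp_v(1), x)]$ keeps both expressions well-defined in the boundary regime $\abs{M_\tau} \to 0$. Finally, I would take the conditional expectation over $A_\delta^v$ exactly as at the end of the proof of Lemma~\ref{lem:intgsupermartlem1}.

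The main obstacle is Case~B. The factor $\prod_{i=1}^v \log_i^+(\sqrt{U_\tau})$ arises from a $v$-fold iterated expansion of $\log_v$, each level contributing one $\log_i(\sqrt{U_\tau})$ factor via the chain rule applied to $\frac{d}{d\lambda}\log_v(1/\lambda) = -[\lambda \prod_{i=1}^{v-1}\log_i(1/\lambda)]^{-1}$. Tracking all of these factors simultaneously while keeping the numerical prefactor at or below $e^{-2}$—so that the single $e^2$ constant in the denominator of the conclusion suffices uniformly in $v$—is the delicate part of the argument.
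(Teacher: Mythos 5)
Your overall architecture is the paper's: fold onto positive $\lambda$, exploit the closed-form antiderivative $W(\lambda)=1/\log_v(1/\lambda)$ of the positive-side density (the source of the $\sqrt{2}$ gain), replace the $[(1\pm\sqrt{k})\mu]$ window by one of width $O(1/\sqrt{U_\tau})$ so that the Gaussian factor costs only an absolute constant (the source of the $\sqrt{3/2}$ gain), split on whether $\abs{M_\tau}\gtrsim\sqrt{U_\tau}$, and condition on $A_\delta^v$ at the end. Your Case B is essentially correct. But your Case A contains a genuine error: you have attached the wrong term of the maximum to the wrong case, and the bound you claim there is quantitatively false. Over a window $[\mu-c/\sqrt{U_\tau},\,\mu]$ with $\mu=\abs{M_\tau}/U_\tau$, the telescoped mass is $W(\mu)-W(\mu-c/\sqrt{U_\tau})\approx (c/\sqrt{U_\tau})\,w(\mu)$, \emph{not} $W(\mu)=1/\log_v(U_\tau/\abs{M_\tau})$; already for $v=1$ this equals $\frac{\log\lrp{\abs{M_\tau}/(\abs{M_\tau}-\sqrt{U_\tau})}}{\log\lrp{U_\tau/\abs{M_\tau}}\log\lrp{U_\tau/(\abs{M_\tau}-\sqrt{U_\tau})}}\approx\frac{\sqrt{U_\tau}}{\abs{M_\tau}\log^2\lrp{U_\tau/\abs{M_\tau}}}$, which in the LIL regime $\abs{M_\tau}\asymp\sqrt{U_\tau\log\log U_\tau}$ is smaller than your claimed $1/\log\lrp{U_\tau/\abs{M_\tau}}$ by an unbounded factor. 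Getting $1/\log_v^+(U_\tau/\abs{M_\tau})$ genuinely requires integrating all the way down to $\lambda=0$ so that the antiderivative telescopes to $W(\mu)$ itself, and that is only affordable when $\mu\leq 2/\sqrt{U_\tau}$ (so the Gaussian factor stays above $e^{-2}$ on all of $[0,\mu]$) --- i.e., precisely in your Case B, which is where the paper derives it.

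The fix, and what the paper actually does, is to prove the \emph{other} bound in the large-$\abs{M_\tau}$ case: when $\abs{M_\tau}\geq 2\sqrt{U_\tau}$, write $W(\mu)-W(\mu-1/\sqrt{U_\tau})=F_v(\log S_\tau)-F_v(\log S_\tau+\log\alpha)$ with $S_\tau=U_\tau/\abs{M_\tau}$ and $\alpha=\abs{M_\tau}/(\abs{M_\tau}-\sqrt{U_\tau})$, use convexity of $F_v(x)=1/\log_{v-1}(x)$ to lower-bound by $\log(\alpha)\cdot\lrp{-F_v'(\log(\alpha S_\tau))}$, and then use $\alpha S_\tau=U_\tau/(\abs{M_\tau}-\sqrt{U_\tau})\leq\sqrt{U_\tau}$ (which is exactly where the hypothesis $\abs{M_\tau}\geq 2\sqrt{U_\tau}$ enters) to dominate the resulting denominator by $\log_v(\sqrt{U_\tau})\prod_{i=1}^{v}\log_i(\sqrt{U_\tau})$. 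Thus the large-deviation case yields the product-of-iterated-logarithms term, and the small-deviation case yields the $\log_v^+(U_\tau/\abs{M_\tau})$ term; since the conclusion only needs the minimum of the two reciprocals, this reversed assignment is what makes the lemma come out. Your Case B argument would in fact also cover the small-$\abs{M_\tau}$ regime via the product term, so the real casualty is Case A: as written it asserts an inequality that fails exactly in the regime the lemma is designed for, and no amount of constant-tracking in the $e^{-2}$ prefactor will repair it.
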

\begin{proof}
For any time $t$, first suppose $\frac{\abs{M_t}}{U_t} - \frac{2}{\sqrt{U_t}} \geq 0$, i.e. $\abs{M_t} \geq 2 \sqrt{U_t}$. 
Then 
\begin{align}
\label{eq:robbcvxityfo}
\int_{0}^{1/e^2} e^{ - \frac{1}{2} U_t \lrp{\lambda - \frac{\abs{M_t}}{U_t}}^2 } &P_\lambda^v \;d \lambda
\geq \int_{\frac{\abs{M_t}}{U_t} - \frac{1}{\sqrt{U_t}}}^{\frac{\abs{M_t}}{U_t}} 
e^{ - \frac{1}{2} U_t \lrp{\lambda - \frac{\abs{M_t}}{U_t}}^2 } P_\lambda^v \;d \lambda 
\geq e^{ - 1/2 } \int_{\frac{\abs{M_t}}{U_t} - \frac{1}{\sqrt{U_t}}}^{\frac{\abs{M_t}}{U_t}} P_\lambda^v \;d \lambda \nonumber \\
&= e^{ - 1/2 } \lrp{ \frac{1}{ \log_{v} \lrp{\frac{U_t}{\abs{M_t}} }} - \frac{1}{\log_{v} \lrp{\frac{U_t}{\abs{M_t} - \sqrt{U_t}} } } } \nonumber \\
&= e^{ - 1/2 } \left[ F_v \lrp{ \log (S_t) } - F_v \lrp{ \log (S_t) + \log \lrp{\alpha} } \right] 
%\geq e^{ - 1/2 } \frac{\log \lrp{2} } { \log^2 \lrp{\sqrt{U_t}} } \geq 4 e^{ - 1/2 }\frac{\log \lrp{2} } { \log^2 \lrp{U_t} }
\end{align}

where $S_t = \frac{U_t}{\abs{M_t}}$, 
$\displaystyle F_v (x) = \frac{1}{\log_{v-1} (x)}$, 
and $\alpha = \frac{\abs{M_t}}{\abs{M_t} - \sqrt{U_t}} \in \left[2 , \frac{\abs{M_t}}{\sqrt{U_t}} \right]$. 
Note that the derivative of $F$ is expressible as 
$\displaystyle F_v' (x) = - \frac{1}{x \log_{v-1} \lrp{ x} 
\left[ \prod_{i=1}^{v-1} \log_i \lrp{ x} \right] }$. 
$F_v (\cdot)$ is monotone decreasing and convex, 
so \eqref{eq:robbcvxityfo} can be lower-bounded to first order as follows.
\begin{align}
\label{supermmgftradeoff}
\mbox{Eq. } \eqref{eq:robbcvxityfo} 
&\geq e^{ - 1/2 } \log \lrp{\alpha} \lrp{ - F_v' \lrp{ \log (S_t) + \log \lrp{\alpha} } } \nonumber \\
&= e^{ - 1/2 } \log \lrp{\alpha} \lrp{\log \lrp{\alpha S_t} \log_{v-1} \lrp{ \log \lrp{\alpha S_t} } 
\left[ \prod_{i=1}^{v-1} \log_i \lrp{ \log \lrp{\alpha S_t} } \right] }^{-1} \nonumber \\
&\geq e^{ - 1/2 } \log(2) \lrp{ \log_{v} \lrp{\alpha S_t } \left[ \prod_{i=1}^{v} \log_i \lrp{ \alpha S_t } \right] }^{-1} \nonumber \\
&\geq e^{ - 1/2 } \log(2) \lrp{ \log_{v} \lrp{\sqrt{U_t} } \left[ \prod_{i=1}^{v} \log_i \lrp{ \sqrt{U_t} } \right] }^{-1} 
\end{align}

Alternatively, if $\frac{\abs{M_t}}{U_t} - \frac{2}{\sqrt{U_t}} \leq 0$, 
\begin{align*}
\int_{0}^{1/e^2} e^{ - \frac{1}{2} U_t \lrp{\lambda - \frac{\abs{M_t}}{U_t}}^2 } P_\lambda^v \;d \lambda
&\geq \int_{0}^{\frac{\abs{M_t}}{U_t}} e^{ - \frac{1}{2} U_t \lrp{\lambda - \frac{\abs{M_t}}{U_t}}^2 } P_\lambda^v \;d \lambda \\
&\geq e^{ - \frac{1}{2} U_t \lrp{\frac{2}{\sqrt{U_t}}}^2 } \int_{0}^{\frac{\abs{M_t}}{U_t}} P_\lambda^v \;d \lambda 
= \frac{e^{ -2 }} { \log_{v} \lrp{\frac{U_t}{\abs{M_t}} } }
\end{align*}
Putting the two cases together, 
\begin{align}
\label{eq:betterintbound}
\int_{0}^{1/e^2} &e^{ - \frac{1}{2} U_t \lrp{\lambda - \frac{\abs{M_t}}{U_t}}^2 } \frac{d \lambda}{\lambda \lrp{ \log \frac{1}{\lambda} }^2} \nonumber \\
&\geq e^{ - 2 } \min \left[ \lrp{ \log_{v}^{+} \lrp{\sqrt{U_t} } \left[ \prod_{i=1}^{v} \log_i^{+} \lrp{ \sqrt{U_t} } \right] }^{-1} , \; \lrp{\log_{v}^{+} \lrp{\frac{U_t}{\abs{M_t}} } }^{-1} \right]
\end{align}
Substituting \eqref{eq:betterintbound} into \eqref{eq:casesintmix} in the proof of Lemma \ref{lem:intgsupermartlem1} gives the result. 
\end{proof}

We can carry out the rest of the proof of Theorem \ref{thm:genunifmart}, using Lemma \ref{lem:bettermgfbound} instead of Lemma \ref{lem:intgsupermartlem1}, 
whereby it can be verified that the resulting uniform non-asymptotic LIL bound, 
for sufficiently high $t$, is at most
\begin{align}
\label{eq:hieroflogs}
\sqrt{2 U_t \lrp{\log \lrp{\frac{2 e^2}{\delta}} + 
\max \left[ \sum_{i=2}^{v+1} \log_i^{+} \lrp{ \sqrt{U_t} } + \log_{v+1}^{+} \lrp{\sqrt{U_t} } \;,\; \log_{v+1}^{+} \lrp{\frac{U_t}{\abs{M_t}} } \right]} } 
\end{align}
%which leads to an ``optimal" leading proportionality constant of $\sqrt{3}$ as $k \to \infty$, 
%as claimed. 

In particular, as $t \to \infty$ the $\log_2 (\cdot)$ term dominates, 
and it has an unimprovable leading constant of $\sqrt{2}$ for any $v \geq 2$.
Also, the $v=1$ case improves the result of Theorem \ref{thm:genunifmart} from a proportionality constant of $\sqrt{6}$ to one of $2$. 

A similar family of distributions to $\{P_\lambda^v\}_{v=1,2,\dots}$ was considered 
by Robbins and Siegmund (\cite{RS70}, Example 4) in a strictly asymptotic setting for Brownian motion, without the $\delta$ dependence; 
our parametrization is clearer for our purposes. 
Our arguments here generalize some of those made in that paper (\cite{RS70}, Section 4) to finite times. 
But we point out that the essential arguments linking weights in the ``scale" $\lambda$-domain to the 
iterated-logarithm order of growth of the moment were noticed as early as \cite{F43}, 
which links back to the remarkable ad hoc calculations of Erd\H{o}s \cite{E42} illuminating the exact rate of growth in \eqref{eq:hieroflogs}.

\section*{Acknowledgements}
I am very grateful to 
Sanjoy Dasgupta, Patrick Fitzsimmons, Yoav Freund, and Matus Telgarsky 
for instructive conversations.

%--------------------------------------------------------------------------------------------------------------------------------------------------------------------------------------------------------------------------------------
%---------------------------------------------------------------------------------------------------------------------------------------------------------------------------------------------------------------------------------------

%\newpage
\appendix

\bibliographystyle{imsart-number}
\bibliography{aop-unifmarts_mref}

% AOS,AOAS: If there are supplements please fill:
%\begin{supplement}[id=suppA]
%  \sname{Supplement A}
%  \stitle{Title}
%  \slink[doi]{10.1214/00-AOASXXXXSUPP}
%  \sdatatype{.pdf}" 
%  \sdescription{Some text}
%\end{supplement}

\end{document}